\documentclass[11pt]{amsart}
\usepackage{amsfonts}
\usepackage{bbm}
\usepackage{amsfonts,amssymb,amsmath,amsthm}
\usepackage{url}
\usepackage{enumerate}
\usepackage{bbm}
\usepackage[all]{xy}
\usepackage[pdftex, colorlinks, citecolor=red, backref=page]{hyperref}
\usepackage{color,xcolor}
\usepackage{pgfplots}
\usepackage{tikz}
\usepackage{setspace}
\urlstyle{sf}

\usepackage{makecell}
\usepackage{multirow}
\usepackage{diagbox}
\newtheorem{theorem}{Theorem}[section]
\newtheorem{lemma}[theorem]{Lemma}
\newtheorem{definition}[theorem]{Definition}
\newtheorem{notion}[theorem]{}

\newtheorem{proposition}[theorem]{Proposition}
\newtheorem{corollary}[theorem]{Corollary}
\theoremstyle{definition}
\newtheorem{remark}[theorem]{Remark}

\newtheorem{example}[theorem]{Example}


\author{Qingnan An}
\address{School of Mathematics and Statistics, Northeast Normal University, Changchun, {\rm130024}, China}
\email{qingnanan1024@outlook.com}


\author{Zhichao Liu}
\address{School of Mathematical Sciences,
Dalian University of Technology,
Dalian, {\rm116024}, China }
\email{lzc.12@outlook.com}

\keywords{Bockstein operation; Total K-theory; Classification; Extension}

\subjclass[2000]{Primary 46L35, Secondary 46K80 19K35}
\begin{document}

\title[Bockstein operations] {Bockstein operations and extensions with trivial boundary maps}

\begin{abstract}
In this paper, we investigate the relationship between ideal structures and the Bockstein operations in the total K-theory, offering various diagrams to demonstrate their effectiveness in classification. We  explore different situations and demonstrate a variety of conclusions, highlighting the crucial role these structures play within the framework of invariants.


\end{abstract}

\maketitle


\section{Introduction}
\subsection{Background}
The Elliott classification programme has enjoyed tremendous success in the classification of C*-algebras. This programme started with the classification of AT-algebras of real rank zero, in which Elliott classified these algebras in terms of the ordered $\mathrm{K}_{*}$-group \cite{Ell,EG}. Later, Gong \cite{G} presented an example to show that the ordered $\mathrm{K}_{*}$-group is not sufficient for the classification of non-simple AH algebras of real rank zero (and no dimension growth). Elliott, Gong, and Su \cite{EGS} constructed such examples for AD algebras by using AH algebras (indirectly). Dadarlat and Loring \cite{DL3} also gave such an example for AD algebras directly. The effort of many mathematicians \cite{DG,DL2,DL3,Ei} led to a refined invariant, the total $\mathrm{K}$-theory group
$$
\underline{\mathrm{K}}(A)=\bigoplus_{n \geq 0} \mathrm{K}_{*}(A ; \mathbb{Z}_n)
$$
endowed with a certain order structure and acted upon by the natural coefficient transformations $$\rho_n^i: \mathrm{K}_i(A) \rightarrow \mathrm{K}_i(A ; \mathbb{Z}_n),$$
$$ \kappa_{m, n}^i:  \mathrm{K}_i(A ; \mathbb{Z}_n) \rightarrow \mathrm{K}_i(A ; \mathbb{Z}_m)$$ and by the Bockstein maps $$\beta_n^i: \mathrm{K}_i(A ; \mathbb{Z}_n) \rightarrow \mathrm{K}_{i+1}(A).$$
The collection of all the maps $\rho, \beta, \kappa$ and their compositions are called Bockstein operations and denoted by $\Lambda$.

It turns out that $\underline{\mathrm{K}}(A)$ provides a better description of the connections between ideals of $A$. Eilers \cite{Ei} showed that $\underline{\mathrm{K}}(A)$ is a complete invariant for AD algebras with real rank zero and bounded torsion in $\mathrm{K}_1$. In 1997, Dadarlat and Gong \cite{DG} presented an important classification of certain approximately subhomogeneous ${\mathrm C}^*$-algebras of real rank zero and no dimension growth. (This class is called the  A$\mathcal{HD}$ class in the subsequent work, because the class in includes all real rank zero AD algebras and AH algebras with slow dimension growth.) Recently, \cite{AELL, ALZ, AL2023} push forward a classification for certain ASH algebras with one-dimensional local spectra (it includes much more general subhomogeneous building blocks than \cite{DG}).

These classifications illustrate the importance of the integration of the coefficient and the Bockstein operations along with the order structure in $\underline{\mathrm{K}}(-)$.
Our primary focus is on the relationships between Bockstein operations and the Dadalat-Gong order for ideals to explore the structure of total K-theory.

\subsection{Structure of invariant}
There are numerous reduction results indicating some intriguing interactions between the order and algebraic structure of K-groups, which make some natural maps in $\underline{\mathrm{K}}(-)$ redundant. Under certain conditions, the coefficients of total K-theory can be simplified (\cite{DE,Ei}).




In \cite{DEb}, Dadalat and Eilers constructed two non-isomorphic A$\mathcal{HD}$ $C^*$-algebras $E$ and $E'$ 
of real rank zero such that there exists a graded group isomorphism from $\underline{\mathrm{K}}(E)$ to $\underline{\mathrm{K}}(E')$ preserving the order and compatible with Bockstein operations $\rho$ and $\kappa$, but not $\beta$. 
Hence, the map $\beta$ is necessary---that is, it can not be deleted by simplification.

For the map $\rho$, they also constructed two non-isomorphic $C^*$-algebras $D$ and $D'$ of real rank zero \cite{DEb} such that there exists a graded group isomorphism from $\underline{\mathrm{K}}(D)$ to $\underline{\mathrm{K}}(D')$ compatible with Bockstein operators $\beta$ and $\kappa$.
They checked that such isomorphism can not be compatible with $\rho_k^i$ for some $k$. However, 
this isomorphism between the total K-theory doesn't preserve the Dadalat-Gong order. Inspired by their construction, we are able to  present an example to show the necessity of 
$\rho$.

Recently, it has been shown in \cite{ALcounter} that there exists two $C^*$-algebras of real rank zero and stable rank one with the same total K-theory, but they are not isomorphic. Such two algebras are not K-pure (see Definition \ref{def k pure}) and
can be distinguished by total Cuntz semigroup \cite{ALjfa,ALcounter}.  The key point is that the isomorphism between the total K-theory preserves all the Bockstein maps except the Dadalat-Gong order for ideals. Thus, we need K-pureness  to characterize this order structure.
This property indicates the extent to which the total K-theory of ideals can be preserved through embeddings.  

The Bockstein operations and the Dadalat-Gong order for ideals are also interconnected by a multitude of commutative diagrams. Combining with the techniques from extension theory, we are able to investigate the invariants between algebras and their ideals. The study of the effective roles in the total K-theory, which can deeply characterize the structure of invariants, also provides key aspects that determine the classification.

\subsection{Main results}

 In this paper, we give a detailed description on the relations between the ideal structure and the total K-theory. 
 As a byproduct, we also prove the necessity of $\rho$  for total K-theory.
\begin{definition}\rm
Denote the resulting invariant $\underline{\rm K}_{\langle\rho\rangle}(-)$ $(\underline{\rm K}_{\langle\beta\rangle}(-))$ if one deletes all the $\rho$ $(\beta)$ maps from $\underline{\rm K}(-)$. We say  a map $\eta$ is a $\underline{\rm K}_{\langle\rho\rangle}$-isomorphism if  $\eta$ is a group graded isomorphism and preserves the Bockstein operations $\beta,\kappa$ and their compositions, but not necessarily preserves the map $\rho$.

 Let $A, B$ be $C^*$-algebras. For $\star= \Lambda,\, \underline{\rm K}_{\langle\rho\rangle},\, \underline{\rm K}_{\langle\beta\rangle} $, we write
$$
(\underline{\rm K}(A),\underline{\rm K}(A)_+,\Sigma\,A)_{\star}
\cong (\underline{\rm K}(B),\underline{\rm K}(B)_+,\Sigma\,B)_\star,
$$
if there exists an ordered scaled $\star$-isomorphism.
\end{definition}
\begin{definition}\rm
Let $\mathcal{E}$ denote the class consisting of the $C^*$-algebras $E$ arising from a unital extensions of $A$ by $B$ with trivial boundary maps (see Definition \ref{trivial boundary def}), where $A, B$ are A$\mathcal{HD}$ algebras of real rank zero and stable rank one, $A$ is unital simple and $B$ is stable.

For $E_1,E_2\in\mathcal{E}$,  we denote the corresponding extensions by
$$
0\to B_i \xrightarrow{\iota_i} E_i\xrightarrow{\pi_i} A_i\to 0,\quad i=1,2.
$$
\end{definition}

Now 
we consider the following cases and demonstrate various conclusions.

Firstly, we note that for the non K-pure C*-algebras, if we assume  that the Dadalat-Gong order is preserved, together with the total K-theory, we can also get the following classification. This is the most essential part in the proof of \cite[Theorem 6.13]{ALcounter}.
\begin{theorem} \label{classification}
Let $E_1, E_2\in \mathcal{E}$, then $E_1\cong E_2$ if and only if there exist  ordered scaled  $\Lambda$-isomorphisms
$$\gamma:\,(\underline{\rm K}(B_1),\underline{\rm K}(B_1)_+,\underline{\rm K}(B_1)_+)_{\Lambda}
\cong (\underline{\rm K}(B_2),\underline{\rm K}(B_2)_+,\underline{\rm K}(B_2)_+)_{\Lambda}
$$
and
$$\eta:\, (\underline{\rm K}(E_1),\underline{\rm K}(E_1)_+,[1_{E_1}])_{\Lambda}
\cong (\underline{\rm K}(E_2),\underline{\rm K}(E_2)_+,[1_{E_2}])_{\Lambda}$$
such that the following diagram is commutative
$$
\xymatrixcolsep{2pc}
\xymatrix{
{\,\,\underline{\mathrm{K}}(B_1)\,\,} \ar[r]^-{\underline{\mathrm{K}}(\iota_1)}\ar[d]_-{\gamma}
& {\,\,\underline{\mathrm{K}}(E_1)\,\,} \ar[d]_-{ \eta}
 \\
{\,\,\underline{\mathrm{K}}(B_2) \,\,}\ar[r]_-{\underline{\mathrm{K}}(\iota_2)}
& {\,\,\underline{\mathrm{K}}(E_2)\,\,}.}
$$
\end{theorem}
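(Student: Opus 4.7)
The ``only if'' direction is routine: an isomorphism $\Phi: E_1 \to E_2$ must send the ideal $B_1$ (the kernel of the quotient onto the simple algebra $A_1$) onto $B_2$, and we take $\gamma := \underline{\mathrm{K}}(\Phi|_{B_1})$ and $\eta := \underline{\mathrm{K}}(\Phi)$. For the ``if'' direction, the plan is to descend the data to the quotients $A_i$, apply existing classification results to each ideal and each quotient separately, and then reassemble an isomorphism of the extensions using $\mathrm{KK}$-theory for extensions.

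\emph{Descent and classification of the pieces.} Because the boundary maps vanish at every coefficient level, the six-term sequences compatible with all Bockstein operations collapse into short exact sequences
\[
0 \to \underline{\mathrm{K}}(B_i) \xrightarrow{\underline{\mathrm{K}}(\iota_i)} \underline{\mathrm{K}}(E_i) \xrightarrow{\underline{\mathrm{K}}(\pi_i)} \underline{\mathrm{K}}(A_i) \to 0,
\]
respected by each of $\rho, \beta, \kappa$. A diagram chase on the hypothesized square, using the isomorphism $\gamma$ on the left and $\eta$ in the middle, induces a $\Lambda$-isomorphism $\alpha: \underline{\mathrm{K}}(A_1) \to \underline{\mathrm{K}}(A_2)$; tracking units and the positive cones transported from $\underline{\mathrm{K}}(E_i)$ shows $\alpha$ is scaled and ordered. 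Apply the Dadarlat--Gong classification of unital simple A$\mathcal{HD}$ algebras of real rank zero (together with its stable/non-unital counterpart) to lift $\alpha$ to an isomorphism $\phi_A: A_1 \to A_2$ and $\gamma$ to an isomorphism $\phi_B: B_1 \to B_2$.

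\emph{Reassembly via extension theory.} Each extension represents a class $\tau_i \in \mathrm{Ext}(A_i, B_i) \cong \mathrm{KK}^1(A_i, B_i)$, which, by the Dadarlat--Loring universal multi-coefficient theorem available for the bootstrap classes at hand, is detected by its action on total K-theory. The given commutative square says precisely that, after identification through $\phi_A$ and $\phi_B$, the total K-theory short exact sequences of $E_1$ and $E_2$ coincide as $\Lambda$-module extensions; hence $(\phi_B)_\ast(\tau_1) = \phi_A^\ast(\tau_2)$ in $\mathrm{Ext}$. A Weyl--von Neumann/absorbing argument tailored to essential extensions of A$\mathcal{HD}$ building blocks with trivial boundary maps then upgrades this equality of extension classes to a $\ast$-isomorphism $\Phi: E_1 \to E_2$ restricting to $\phi_B$ on the ideal, descending to $\phi_A$ on the quotient, and---after an inner perturbation if necessary to match the unit---realizing $\eta$ on total K-theory.

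The principal obstacle lies in the final step: converting the KK-theoretic agreement of extension classes into a genuine $\ast$-isomorphism that respects the unit $[1_{E_i}]$ and the Dadarlat--Gong order on ideals. This is where the hypotheses on $\mathcal{E}$ enter in an essential way: triviality of the boundary maps ensures the extension is captured faithfully by the short exact sequence on $\underline{\mathrm{K}}$; the A$\mathcal{HD}$ structure grants access to the UMCT and to the existence/uniqueness theorems for $\ast$-homomorphisms needed for Weyl--von Neumann absorption; and real rank zero together with stable rank one enables the order structure to be controlled through projections and their Bockstein lifts.
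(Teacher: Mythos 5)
Your skeleton (classify the ideal, classify the quotient, reassemble via extension theory) is the same as the paper's, but two of your key steps are genuinely wrong. First, it is not true that trivial boundary maps on $\mathrm{K}_*$ force the total K-theory six-term sequences to collapse into short exact sequences $0 \to \underline{\mathrm{K}}(B_i) \to \underline{\mathrm{K}}(E_i) \to \underline{\mathrm{K}}(A_i) \to 0$: the vanishing of $\delta_0,\delta_1$ on integral K-theory does not kill the connecting maps with $\mathbb{Z}_n$-coefficients, and the paper's own Example \ref{rho example} exhibits extensions in $\mathcal{E}$ for which $\underline{\mathrm{K}}(\iota_i)$ fails to be injective (equivalently $\underline{\mathrm{K}}(\pi_i)$ fails to be surjective). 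Consequently your diagram chase producing $\alpha:\underline{\mathrm{K}}(A_1)\to\underline{\mathrm{K}}(A_2)$ does not go through at the level of $\underline{\mathrm{K}}$; only the $\mathrm{K}_*$-rows are short exact, which is why the paper descends at the $\mathrm{K}_*$-level to get $\varrho$ and then uses simplicity of the $A_i$ (Proposition \ref{simple k* to k total}) to recover total K-theory data for the quotients.

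Second, your reassembly overreaches: the class of the extension in $\mathrm{Ext}(A_i,B_i)\cong \mathrm{KK}^1(A_i,B_i)$ is \emph{not} detected by its action on total K-theory --- the UMCT sequence has a $\mathrm{Pext}(\mathrm{K}_*(A_i),\mathrm{K}_*(B_i))$ kernel, which does not vanish for groups like $\mathbb{Z}[\tfrac14]$ occurring here --- so the hypothesized commutative square cannot yield $(\phi_B)_*(\tau_1)=\phi_A^*(\tau_2)$ in $\mathrm{KK}^1$, and no Weyl--von Neumann argument can start from that equality. The correct (and weaker) input is Theorem \ref{strong wei}: for unital essential extensions with trivial boundary maps, strong unitary equivalence is governed by the purely algebraic class in $\mathrm{Ext}_{[1]}(\mathrm{K}_*(A),\mathrm{K}_*(B))$, i.e.\ by the induced unit-preserving extension of $\mathrm{K}_*$-groups alone. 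That is exactly the invariant your commutative diagram does control after restriction to $\mathrm{K}_*$, and it is why the paper reduces Theorem \ref{classification} to the formally stronger Theorem \ref{weak class}, whose hypothesis is commutativity only at the $\mathrm{K}_*$-level.
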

Under this circumstance, the total K-theory along with the compatible ideal structure will recover the total Cuntz semigroup and can be used as invariants to classify extensions (see \cite[5.1]{ERZ}).

Secondly, if we only assume that the diagram  above only commutes at the level of ${\rm K}_*$-group, the conclusion is still hold. In this case, $\gamma,\eta$ preserve Bockstein operations and Dadalat-Gong order, but we don't need to assume their Bockstein operations and  Dadalat-Gong  order are compatiable at the level of $\underline{\mathrm{K}}$-groups. 
\begin{theorem}\label{weak class}
Let $E_1, E_2\in \mathcal{E}$, then $E_1\cong E_2$ if and only if there exist 
$\Lambda$-isomorphisms
$$\gamma:\,(\underline{\rm K}(B_1),\underline{\rm K}(B_1)_+,\underline{\rm K}(B_1)_+)_\Lambda
\cong (\underline{\rm K}(B_2),\underline{\rm K}(B_2)_+,\underline{\rm K}(B_2)_+)_\Lambda
$$
and
$$\eta:\, (\underline{\rm K}(E_1),\underline{\rm K}(E_1)_+,[1_{E_1}])_\Lambda
\cong (\underline{\rm K}(E_2),\underline{\rm K}(E_2)_+,[1_{E_2}])_\Lambda$$
such that the following diagram is commutative
$$
\xymatrixcolsep{2pc}
\xymatrix{
{\,\,{\mathrm{K}}_*(B_1)\,\,} \ar[r]^-{{\mathrm{K}}_*(\iota_1)}\ar[d]_-{\gamma_*}
& {\,\,{\mathrm{K}}_*(E_1)\,\,} \ar[d]_-{ \eta_*}
 \\
{\,\,{\mathrm{K}}_*(B_2) \,\,}\ar[r]_-{{\mathrm{K}}_*(\iota_2)}
& {\,\,{\mathrm{K}}_*(E_2)\,\,},}
$$
where $\gamma_*, \eta_*$ is the restriction maps of $\gamma, \eta$.
\end{theorem}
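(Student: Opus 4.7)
The plan is to reduce the backward direction of Theorem \ref{weak class} to Theorem \ref{classification}; the forward direction is immediate from functoriality of $\underline{\mathrm{K}}$ applied to an isomorphism $E_1\cong E_2$. The strategy is to produce a $\Lambda$-automorphism $\phi$ of $\underline{\mathrm{K}}(E_2)$ such that replacing $\eta$ by $\phi\circ\eta$ upgrades the $\mathrm{K}_*$-level commutative square to a $\underline{\mathrm{K}}$-level commutative square, without destroying the ordered and scaled character of $\eta$.

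First I would analyse the discrepancy
$$\delta \;:=\; \underline{\mathrm{K}}(\iota_2)\circ\gamma \;-\; \eta\circ\underline{\mathrm{K}}(\iota_1)\colon\; \underline{\mathrm{K}}(B_1)\longrightarrow \underline{\mathrm{K}}(E_2).$$
Two properties follow from the hypotheses. The $\mathrm{K}_*$-level commutativity gives $\delta|_{\mathrm{K}_*(B_1)}=0$, and since $\gamma,\eta$ are $\Lambda$-isomorphisms, $\delta$ commutes with every Bockstein operation. Combined with the natural short exact sequence
$$0\longrightarrow \mathrm{K}_i(B_1)\otimes\mathbb{Z}_n\xrightarrow{\;\rho_n\;}\mathrm{K}_i(B_1;\mathbb{Z}_n)\xrightarrow{\;\beta_n\;}\mathrm{Tor}(\mathrm{K}_{i-1}(B_1),\mathbb{Z}_n)\longrightarrow 0,$$
this forces the restriction of $\delta$ to each $\mathrm{K}_i(B_1;\mathbb{Z}_n)$ to vanish on the image of $\rho_n$ and to land inside the image of $\rho_n$ in $\mathrm{K}_i(E_2;\mathbb{Z}_n)$. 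Thus $\delta$ is parametrised by a $\kappa$-compatible family of $\mathbb{Z}_n$-linear maps $\mathrm{Tor}(\mathrm{K}_{i-1}(B_1),\mathbb{Z}_n)\to \mathrm{K}_i(E_2)\otimes\mathbb{Z}_n$.

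Next I would construct $\phi=\mathrm{id}+\psi$, where $\psi$ is a $\Lambda$-compatible endomorphism of $\underline{\mathrm{K}}(E_2)$ that vanishes on $\mathrm{K}_*(E_2)$ and takes values in the images of the $\rho_n$. Since the boundary maps of $0\to B_2\to E_2\to A_2\to 0$ are trivial, each $\underline{\mathrm{K}}(\iota_2)$ is injective, so the requirement $\psi\circ\eta\circ\underline{\mathrm{K}}(\iota_1)=\delta$ prescribes $\psi$ on the image of $\eta\circ\underline{\mathrm{K}}(\iota_1)\subseteq\underline{\mathrm{K}}(E_2)$. Extending this prescription $\kappa$-compatibly to all of $\underline{\mathrm{K}}(E_2)$ uses the extension/lifting behaviour of total K-theory for the A$\mathcal{HD}$ class. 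With $\psi$ in hand, $\phi=\mathrm{id}+\psi$ is a $\Lambda$-automorphism (its inverse is $\mathrm{id}-\psi$, since $\psi^2=0$: $\psi$ lands in the image of $\rho_n$, and $\psi$ is zero there because it is zero on $\mathrm{K}_*(E_2)$ and commutes with $\rho_n$). Moreover $\phi$ fixes $\mathrm{K}_*(E_2)$ pointwise and hence the scale $[1_{E_2}]\in \mathrm{K}_0(E_2)$.

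Replacing $\eta$ by $\phi\circ\eta$ then yields an ordered scaled $\Lambda$-isomorphism for which the $\underline{\mathrm{K}}$-level square with $\gamma$ commutes, whereupon Theorem \ref{classification} delivers $E_1\cong E_2$. The main obstacle is verifying that $\phi$ preserves the positive cone $\underline{\mathrm{K}}(E_2)_+$: the perturbation $\psi$ modifies elements of $\mathrm{K}_i(E_2;\mathbb{Z}_n)$ by elements of $\mathrm{K}_i(E_2)\otimes\mathbb{Z}_n$ sitting over torsion, and one must argue that such modifications stay inside the cone. This relies on the description of positivity in mod-$n$ K-theory of real rank zero A$\mathcal{HD}$ extensions with trivial boundary: because the Dadalat-Gong order for ideals is recovered from the $\mathrm{K}_*$-projections together with the Bockstein structure, and $\phi$ fixes $\mathrm{K}_*(E_2)$, the cone is preserved. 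Handling this carefully, particularly when $E_2$ is not $\mathrm{K}$-pure (as is allowed in $\mathcal{E}$), is the delicate step.
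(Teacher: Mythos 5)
Your proposal runs in the opposite direction from the paper and, as written, has a genuine gap. The paper does not deduce Theorem \ref{weak class} from Theorem \ref{classification}; it proves Theorem \ref{classification} \emph{by} proving Theorem \ref{weak class}, so your reduction is circular within this paper unless you import Theorem \ref{classification} from elsewhere. More seriously, the construction of the correcting automorphism $\phi=\mathrm{id}+\psi$ breaks down at the first step. You assert that triviality of the boundary maps makes $\underline{\mathrm{K}}(\iota_2)$ injective; this is false --- trivial boundary maps only give injectivity of $\mathrm{K}_*(\iota_2)$, and the paper's own constructions (see the proof of Theorem \ref{rho nec thm}, where it is noted explicitly that $\underline{\mathrm{K}}(\iota_i)$ is not injective although $\mathrm{K}_*(\iota_i)$ is) show that $\underline{\mathrm{K}}(\iota_2)$ fails to be injective for members of $\mathcal{E}$; this failure is exactly the non-K-pure phenomenon the class $\mathcal{E}$ is designed to include. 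Relatedly, prescribing $\psi$ on the image of $\eta\circ\underline{\mathrm{K}}(\iota_1)$ by $\psi\circ\eta\circ\underline{\mathrm{K}}(\iota_1)=\delta$ is only well defined if $\delta$ vanishes on $\ker\underline{\mathrm{K}}(\iota_1)$, i.e.\ if $\gamma(\ker\underline{\mathrm{K}}(\iota_1))\subseteq\ker\underline{\mathrm{K}}(\iota_2)$ --- which is precisely the $\underline{\mathrm{K}}$-level compatibility you are trying to manufacture and is not implied by $\mathrm{K}_*$-level commutativity. The subsequent extension of $\psi$ to all of $\underline{\mathrm{K}}(E_2)$ and the preservation of the Dadarlat--Gong cone are also left unargued; you flag the latter as delicate but give no mechanism for it.

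The paper's route avoids all of this and explains why the $\mathrm{K}_*$-level hypothesis suffices: since $B_1,B_2$ are A$\mathcal{HD}$ algebras of real rank zero, $\gamma$ lifts to an isomorphism $\phi_0\colon B_1\to B_2$; the $\mathrm{K}_*$-commutative square with exact rows induces an ordered scaled isomorphism $\varrho$ on $\mathrm{K}_*(A_1)\to\mathrm{K}_*(A_2)$ (order-preservation via \cite[Proposition 4]{LR}), which by Proposition \ref{simple k* to k total} and the classification lifts to $\phi_1\colon A_1\to A_2$. One then compares the extensions $0\to B_1\to E_1\to A_2\to 0$ (with quotient map $\phi_1\circ\pi_1$) and $0\to B_1\to E_2\to A_2\to 0$ (with inclusion $\iota_2\circ\phi_0$) inside ${\rm Text}_s^u(A_2,B_1)\cong\mathrm{Ext}_{[1]}(\mathrm{K}_*(A_2),\mathrm{K}_*(B_1))$ (Theorem \ref{strong wei}); since this classifying group sees only $\mathrm{K}_*$-data, the $\mathrm{K}_*$-level commutativity is all that is needed, and strong unitary equivalence gives $E_1\cong E_2$. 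No upgrading of the diagram to $\underline{\mathrm{K}}$-commutativity is performed or required. If you want to salvage your approach, you would first have to prove that $\mathrm{K}_*$-commutativity forces $\underline{\mathrm{K}}$-commutativity after an allowed correction, but the paper's Theorem \ref{beta+com} shows that the $\underline{\mathrm{K}}$-level diagram genuinely carries more information in closely related settings, so this cannot be done by soft arguments.
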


Thirdly, we show that there exist two non-isomorphic C*-algebras of real rank zero and stable rank one with isomorphic scaled ordered total K-theory as $\Lambda$-module, though there is also a graded scaled ordered group isomorphism between their unique maximal ideals' total K-theory which is even compatible with the Bockstein operations $\beta,\kappa$, but not $\rho$. That is, the map $\rho$ on ideals is necessary!



\begin{theorem}\label{rho is nece}
There are two non-isomorphic $C^*$-algebras $E_1, E_2$ in $\mathcal{E}$ satisfying that there exist 
 isomorphisms
$$\gamma:\,(\underline{\rm K}(B_1),\underline{\rm K}(B_1)_+,\underline{\rm K}(B_1)_+)_{\underline{\rm K}_{\langle\rho\rangle}}
\cong (\underline{\rm K}(B_2),\underline{\rm K}(B_2)_+,\underline{\rm K}(B_2)_+)_{\underline{\rm K}_{\langle\rho\rangle}}
$$
and
$$\eta:\, (\underline{\rm K}(E_1),\underline{\rm K}(E_1)_+,[1_{E_1}])_{\Lambda}
\cong (\underline{\rm K}(E_2),\underline{\rm K}(E_2)_+,[1_{E_2}])_{\Lambda}$$
such that the following diagram is commutative
$$
\xymatrixcolsep{2pc}
\xymatrix{
{\,\,\underline{\mathrm{K}}(B_1)\,\,} \ar[r]^-{\underline{\mathrm{K}}(\iota_1)}\ar[d]_-{\gamma}
& {\,\,\underline{\mathrm{K}}(E_1)\,\,} \ar[d]_-{ \eta}
 \\
{\,\,\underline{\mathrm{K}}(B_2) \,\,}\ar[r]_-{\underline{\mathrm{K}}(\iota_2)}
& {\,\,\underline{\mathrm{K}}(E_2)\,\,}.}
$$
\end{theorem}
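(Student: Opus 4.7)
The plan is to adapt the Dadarlat--Eilers construction from \cite{DEb} of a pair of non-isomorphic real-rank-zero A$\mathcal{HD}$ algebras $D$ and $D'$ admitting a graded scaled ordered group isomorphism of $\underline{\mathrm{K}}$ compatible with $\beta$ and $\kappa$ but failing to preserve $\rho_k^i$ for some $k, i$. Stabilizing, I set $B_1 := D \otimes \mathcal{K}$ and $B_2 := D' \otimes \mathcal{K}$; these lie in the stable A$\mathcal{HD}$ class of real rank zero and stable rank one, and inherit a $\underline{\rm K}_{\langle\rho\rangle}$-isomorphism $\gamma$ of ordered total K-theories (the scale being trivial since $B_i$ is stable). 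Crucially, by the classification of A$\mathcal{HD}$ algebras, no ordered scaled $\Lambda$-isomorphism between $\underline{\mathrm{K}}(B_1)$ and $\underline{\mathrm{K}}(B_2)$ can exist, since $D \not\cong D'$.

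Next, I would choose a suitable simple unital A$\mathcal{HD}$ algebra $A$ of real rank zero and stable rank one, with sufficiently rich $\underline{\mathrm{K}}(A)$, and construct two unital essential extensions
\[
0 \to B_i \xrightarrow{\iota_i} E_i \xrightarrow{\pi_i} A \to 0, \quad i = 1, 2,
\]
with trivial integer boundary maps, so that $E_1, E_2 \in \mathcal{E}$. Under this hypothesis the six-term sequences split on $\mathrm{K}_0$ and $\mathrm{K}_1$, but the induced boundary maps on K-theory with $\mathbb{Z}_n$-coefficients need not vanish, and these are the obstructions I intend to exploit. The idea is to pick the two extension classes so that the coefficient-boundary pattern of $E_2$ is exactly what is needed to kill, after pushing into $\underline{\mathrm{K}}(E_2)$, the commutator discrepancy $\gamma \circ \rho - \rho \circ \gamma$ coming from $\underline{\mathrm{K}}(B_2)$; equivalently, so that this discrepancy lies in $\ker \underline{\mathrm{K}}(\iota_2)$. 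This engineered cancellation then produces a full ordered scaled $\Lambda$-isomorphism $\eta$ at the $E_i$-level making the diagram commute on all of $\underline{\mathrm{K}}(B_i)$.

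The main obstacle is realizing such an extension pair concretely and simultaneously checking: (a) the integer boundary map is zero, so $E_i \in \mathcal{E}$; (b) the coefficient boundary maps yield a kernel of $\underline{\mathrm{K}}(\iota_i)$ that precisely absorbs the $\rho$-discrepancy of $\gamma$; and (c) the resulting $\underline{\mathrm{K}}(E_1)$ and $\underline{\mathrm{K}}(E_2)$ agree as ordered scaled $\Lambda$-modules under $\eta$, with the unital condition and Dadarlat--Gong order preserved. I would arrange (a)--(c) by choosing the Dadarlat--Eilers model so that the $\rho$-failure is concentrated in a single $\mathrm{K}_j(B_i;\mathbb{Z}_n)$ for a specific $n$, then selecting the essential extension whose $n$-coefficient boundary hits exactly the obstructing element; compatibility of scale, positive cone, and unit then follows by transporting the data of $E_1$ to $E_2$ via $\gamma$ on the ideal and the identity on $\underline{\mathrm{K}}(A)$.

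Finally, the non-isomorphism $E_1 \not\cong E_2$ is forced by Theorem \ref{classification}: any $*$-isomorphism between $E_1$ and $E_2$ would induce an ordered scaled $\Lambda$-isomorphism between $\underline{\mathrm{K}}(B_1)$ and $\underline{\mathrm{K}}(B_2)$ compatible with some $\eta'$, contradicting the nonexistence of any $\Lambda$-isomorphism between $\underline{\mathrm{K}}(B_1)$ and $\underline{\mathrm{K}}(B_2)$ established in the first paragraph. Thus the pair $(E_1, E_2)$ witnesses that the $\rho$-map on ideals carries irreducible classification data, even when the full $\Lambda$-structure is preserved at the ambient algebra level.
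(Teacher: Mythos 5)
Your overall strategy matches the paper's in outline, but there is a genuine gap at the very first step. You take $B_1=D\otimes\mathcal{K}$, $B_2=D'\otimes\mathcal{K}$ for the Dadarlat--Eilers pair and assert that they ``inherit a $\underline{\rm K}_{\langle\rho\rangle}$-isomorphism $\gamma$ of ordered total K-theories.'' They do not. The Dadarlat--Eilers conjugation (by $-\mathrm{id}$ on $\mathrm{K}_0(B;\mathbb{Z}_3)$) only intertwines the invariants at the level of $\mathbf{K}(-;3)$; the paper checks in \ref{3 and 9} that the corresponding diagram at $\mathbb{Z}_9$-coefficients fails to commute (since $\mathbf{K}(\varphi;9)=(3,(3,\iota),0)$ while $\mathbf{K}(\varphi';9)=(3,(3,-\iota),0)$, and no automorphism of $\mathrm{K}_0(B;\mathbb{Z}_9)\cong\mathbb{Z}_9$ conjugates one to the other), and moreover that the induced map sends a non-positive element of $\mathrm{K}_0(D)\oplus\mathrm{K}_0(D;\mathbb{Z}_9)$ to a positive one. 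So no graded ordered isomorphism $\underline{\mathrm{K}}_{\langle\rho\rangle}(D)\cong\underline{\mathrm{K}}_{\langle\rho\rangle}(D')$ is obtained this way, and your $\gamma$ does not exist for this choice of $B_i$. This is precisely why the paper replaces $D,D'$ by the modified pair $F_1,F_2$ of \ref{def F1F2}, built from a sequence $\omega_j,\omega_j'$ whose $\mathbb{Z}_k$-coefficient maps are trivial on the free summand (so that for each fixed $k$ the conjugation works for all $j\geq k$), and then proves $F_1\ncong F_2$ by a separate, fairly delicate argument (Theorem \ref{thm mod DE}).

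The remainder of your plan is in the right spirit but stays at the level of intention where the real work lies. The paper's mechanism for making $\eta$ a genuine ordered scaled $\Lambda$-isomorphism is to choose the quotient $A_\chi$ with $\mathrm{K}_0(A_\chi)=\mathbb{Q}\oplus\mathbf{Q}/\mathbf{Z}$ and the extension class so that $\mathrm{K}_0(E_i)\cong\mathbb{Q}\oplus\mathbf{Q}$ is divisible; then $\mathrm{K}_*(E_i;\mathbb{Z}_n)$ collapses to $0$ or $\mathbb{Z}_3$, $\underline{\mathrm{K}}(\iota_i)$ is far from injective, the $\rho$-discrepancy of $\gamma$ dies in $\ker\underline{\mathrm{K}}(\iota_2)$, and one can take $\eta=\mathrm{id}$ after an explicit computation of the positive cone via Theorem \ref{yibande k0+}. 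Your appeal to Theorem \ref{classification} for $E_1\ncong E_2$ is also more roundabout than needed: since $B_i$ is the unique maximal proper ideal of $E_i$, any isomorphism $E_1\cong E_2$ would restrict to $B_1\cong B_2$, which is already excluded. As written, the proposal does not constitute a proof: the key object $\gamma$ is not produced, and the engineered cancellation is described but not realized.
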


Finally, we show that if we do not require $\gamma$ to preserve the Bockstein operations $\beta$ on the ideals, and if  $\eta$ preserves $\beta$ on the algebras and the diagram commutes at the level of $\underline{\rm K}$-groups, then $\gamma$ preserves $\beta$ automatically. This means we don't need to check the compatibility of $\beta$ between the ideals in this case.
\begin{theorem}\label{beta not ness}
Let $E_1, E_2\in \mathcal{E}$, then $E_1\cong E_2$ if and only if there exist 
 isomorphisms
$$\gamma:\,(\underline{\rm K}(B_1),\underline{\rm K}(B_1)_+,\underline{\rm K}(B_1)_+)_{\underline{\rm K}_{\langle\beta\rangle}}
\cong (\underline{\rm K}(B_2),\underline{\rm K}(B_2)_+,\underline{\rm K}(B_2)_+)_{\underline{\rm K}_{\langle\beta\rangle}}
$$
and
$$\eta:\, (\underline{\rm K}(E_1),\underline{\rm K}(E_1)_+,[1_{E_1}])_{\Lambda}
\cong (\underline{\rm K}(E_2),\underline{\rm K}(E_2)_+,[1_{E_2}])_{\Lambda}$$
such that the following diagram is commutative
$$
\xymatrixcolsep{2pc}
\xymatrix{
{\,\,\underline{\mathrm{K}}(B_1)\,\,} \ar[r]^-{\underline{\mathrm{K}}(\iota_1)}\ar[d]_-{\gamma}
& {\,\,\underline{\mathrm{K}}(E_1)\,\,} \ar[d]_-{ \eta}
 \\
{\,\,\underline{\mathrm{K}}(B_2) \,\,}\ar[r]_-{\underline{\mathrm{K}}(\iota_2)}
& {\,\,\underline{\mathrm{K}}(E_2)\,\,}.}
$$
\end{theorem}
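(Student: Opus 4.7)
The plan is to reduce to Theorem \ref{classification} by showing that, under the given hypotheses, the map $\gamma$ automatically preserves every Bockstein map $\beta_n^i$ and therefore upgrades from a $\underline{\mathrm{K}}_{\langle\beta\rangle}$-isomorphism to a full ordered scaled $\Lambda$-isomorphism. The forward direction is immediate: any $*$-isomorphism $E_1\cong E_2$ respects the canonical ideals and induces full $\Lambda$-isomorphisms making the square commute at the $\underline{\mathrm{K}}$-group level. For the backward direction, once $\gamma$ is known to respect $\beta$, Theorem \ref{classification} applies verbatim to yield $E_1\cong E_2$.

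The central observation is that the trivial boundary hypothesis forces each $\underline{\mathrm{K}}(\iota_j)$ to be injective on every coefficient component $\mathrm{K}_i(-;\mathbb{Z}_n)$. Indeed, the natural six-term exact sequence with coefficients
\[
\cdots \to \mathrm{K}_i(A_j;\mathbb{Z}_n)\xrightarrow{\partial_i^n}\mathrm{K}_{i+1}(B_j;\mathbb{Z}_n)\xrightarrow{\underline{\mathrm{K}}(\iota_j)}\mathrm{K}_{i+1}(E_j;\mathbb{Z}_n)\xrightarrow{\underline{\mathrm{K}}(\pi_j)}\mathrm{K}_{i+1}(A_j;\mathbb{Z}_n)\to\cdots
\]
collapses, under the vanishing of every $\partial_i^n$, to a family of short exact sequences $0\to\underline{\mathrm{K}}(B_j)\to\underline{\mathrm{K}}(E_j)\to\underline{\mathrm{K}}(A_j)\to 0$ at every level, so $\underline{\mathrm{K}}(\iota_j)$ is injective throughout. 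Granted this, I will run the diagram chase
\[
\underline{\mathrm{K}}(\iota_2)\circ\gamma\circ\beta_n^i
=\eta\circ\underline{\mathrm{K}}(\iota_1)\circ\beta_n^i
=\eta\circ\beta_n^i\circ\underline{\mathrm{K}}(\iota_1)
=\beta_n^i\circ\eta\circ\underline{\mathrm{K}}(\iota_1)
=\beta_n^i\circ\underline{\mathrm{K}}(\iota_2)\circ\gamma
=\underline{\mathrm{K}}(\iota_2)\circ\beta_n^i\circ\gamma,
\]
using, in order, commutativity of the given square at $\mathrm{K}_i(-;\mathbb{Z}_n)$, naturality of $\beta$ for $\iota_1$, the hypothesis that $\eta$ preserves $\beta$, commutativity of the square at $\mathrm{K}_{i+1}$, and naturality of $\beta$ for $\iota_2$. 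Since $\underline{\mathrm{K}}(\iota_2)$ is injective on $\mathrm{K}_{i+1}(B_2)$, I cancel it from the left to obtain $\gamma\circ\beta_n^i=\beta_n^i\circ\gamma$, as required.

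The main point to verify, and the chief potential obstacle, is that ``trivial boundary maps'' as in Definition \ref{trivial boundary def} encompasses the vanishing of $\partial_i^n$ in \emph{every} coefficient six-term sequence, not merely in the ordinary $\mathrm{K}_*$-sequence; for the class $\mathcal{E}$ this should be built into the definition (and is compatible with the naturality of $\rho$ and $\kappa$ with respect to the index maps). Beyond this bookkeeping, the argument is entirely formal: once $\gamma$ has been promoted to a $\Lambda$-isomorphism, all hypotheses of Theorem \ref{classification} are satisfied and the conclusion $E_1\cong E_2$ follows at once.
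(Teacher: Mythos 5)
Your approach is the same as the paper's: the paper proves Theorem \ref{beta not ness} by first establishing (Theorem \ref{beta aut thm}) that $\gamma$ automatically preserves $\beta$, via exactly the diagram chase you write down, and then invoking Theorem \ref{classification}. Your composite chain
$\underline{\mathrm{K}}(\iota_2)\circ\gamma\circ\beta_n^i=\cdots=\underline{\mathrm{K}}(\iota_2)\circ\beta_n^i\circ\gamma$
followed by cancellation of $\underline{\mathrm{K}}(\iota_2)$ on $\mathrm{K}_{i+1}(B_2)$ is precisely the paper's element-by-element argument, compressed.

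However, what you call the ``central observation'' --- that the trivial-boundary hypothesis forces every coefficient boundary map $\partial_i^n$ to vanish, hence that $\underline{\mathrm{K}}(\iota_j)$ is injective on every component $\mathrm{K}_i(-;\mathbb{Z}_n)$ --- is false, and the paper explicitly contradicts it: in the proof of Theorem \ref{rho nec thm} it is noted that $\underline{\mathrm{K}}(\iota_i)$ is \emph{not} injective even though $\mathrm{K}_*(\iota_i)$ is. Vanishing of the integral index maps $\delta_0,\delta_1$ (which is all Definition \ref{trivial boundary def} asks for) does not propagate to the mod-$n$ six-term sequences; the failure of this propagation is exactly the K-pureness phenomenon the paper is organized around. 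Fortunately your argument never actually uses the false claim: since $\beta_n^i$ takes values in the \emph{ordinary} group $\mathrm{K}_{i+1}(B_2)$, the only injectivity needed for the cancellation is that of $\mathrm{K}_{i+1}(\iota_2)$ on $\mathrm{K}_{i+1}(B_2)$, which does follow from the trivial-boundary hypothesis. So the proof stands once you replace the overreaching injectivity claim by this weaker, correct one; as written, the justification you flag as ``the main point to verify'' is wrong, and if the chase had genuinely required injectivity on the coefficient components the argument would collapse. (A minor bookkeeping slip: your first equality uses commutativity of the square at $\mathrm{K}_{i+1}$, and your fourth uses it at $\mathrm{K}_i(-;\mathbb{Z}_n)$, not the other way around as your list of justifications suggests.)
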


Moreover, we point out that \cite[Theorem 3.10]{ALcounter}
also has the following formulation, which reveals that
the Dadalat-Gong order on the total K-groups of ideals is necessary, where the existence of $\gamma$ just comes from the KK-equivalence in UCT \cite{RS} and UMCT \cite{DL2}:
\begin{theorem}\label{order is nece}
There exist two non-isomorphic $C^*$-algebras $E_1, E_2$ in $\mathcal{E}$ satisfying that
there exist a $\Lambda$-isomorphism which doesn't preserve the order 
$$\gamma:\,\underline{\rm K}(B_1)
\cong \underline{\rm K}(B_2),
$$ and an ordered scaled $\Lambda$-isomorphism
$$\eta:\, (\underline{\rm K}(E_1),\underline{\rm K}(E_1)_+,[1_{E_1}])_{\Lambda}
\cong (\underline{\rm K}(E_2),\underline{\rm K}(E_2)_+,[1_{E_2}])_{\Lambda}$$
such that the following diagram is commutative
$$
\xymatrixcolsep{2pc}
\xymatrix{
{\,\,\underline{\mathrm{K}}(B_1)\,\,} \ar[r]^-{\underline{\mathrm{K}}(\iota_1)}\ar[d]_-{\gamma}
& {\,\,\underline{\mathrm{K}}(E_1)\,\,} \ar[d]_-{ \eta}
 \\
{\,\,\underline{\mathrm{K}}(B_2) \,\,}\ar[r]_-{\underline{\mathrm{K}}(\iota_2)}
& {\,\,\underline{\mathrm{K}}(E_2)\,\,}.}
$$
Here, we even have the restriction map is an ordered scaled isomorphism
$$\gamma_*:\,({\rm K}_*(B_1),{\rm K}_*^+(B_1),{\rm K}_*^+(B_1))
\cong ({\rm K}_*(B_2),{\rm K}_*^+(B_2),{\rm K}_*^+(B_2)).
$$
\end{theorem}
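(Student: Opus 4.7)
The plan is to recognize this statement as a repackaging of \cite[Theorem 3.10]{ALcounter}, so the proof is chiefly a matter of taking the pair $E_1, E_2$ produced there and checking that every clause above is satisfied. First I would recall the construction of \cite{ALcounter}: one builds A$\mathcal{HD}$ algebras $B_1, B_2$ of real rank zero and stable rank one whose $({\mathrm K}_*,{\mathrm K}_*^+,\Sigma)$ agree via some ordered scaled isomorphism $\gamma_*$, yet whose $\underline{\mathrm K}$-data, when endowed with the Dadalat--Gong order inherited from the ideal structure, are not order-isomorphic. One then assembles $B_i$ into unital extensions in $\mathcal{E}$ with trivial boundary maps so that the total K-theory of $E_1$ and $E_2$ becomes order-isomorphic and compatible with all Bockstein operations, while the ideals remain distinguishable in DG order. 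The failure to be isomorphic at the algebra level is precisely the content of \cite[Theorem 3.10]{ALcounter}.

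Next I would produce the isomorphism $\gamma$ on ideals as a $\Lambda$-isomorphism (dropping the DG order). Since $B_1, B_2$ are nuclear and sit in the bootstrap class, the UCT of Rosenberg--Schochet \cite{RS} together with the universal multi-coefficient theorem of Dadarlat--Loring \cite{DL2} identifies $\mathrm{Hom}_\Lambda(\underline{\mathrm K}(B_1),\underline{\mathrm K}(B_2))$ with $\mathrm{KK}(B_1,B_2)$, so any abstract $\Lambda$-isomorphism of $\underline{\mathrm K}$-modules is realized by a KK-equivalence. The construction of $B_1, B_2$ ensures such an abstract $\Lambda$-isomorphism exists; this is $\gamma$. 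Its restriction to ${\mathrm K}_*$ agrees with the ordered scaled isomorphism $\gamma_*$ built in by construction, which proves the ``moreover'' clause.

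I would then verify commutativity of the square. The ordered scaled $\Lambda$-isomorphism $\eta$ on the $E_i$ is supplied by the construction, and the trivial boundary condition defining $\mathcal{E}$ makes $\underline{\mathrm K}(\iota_i)$ an injection of $\Lambda$-modules with image determined by the six-term exact sequence split off from the K-theory of the extension. The diagram can therefore be forced to commute by adjusting $\gamma$ within its $\Lambda$-automorphism group so that $\eta\circ\underline{\mathrm K}(\iota_1)$ and $\underline{\mathrm K}(\iota_2)\circ\gamma$ coincide on the image; this is exactly what the construction in \cite{ALcounter} arranges. Finally, the failure of $\gamma$ to preserve the DG order on $\underline{\mathrm K}(B_i)$ is built into the construction, since it is this failure that prevents $E_1, E_2$ from being isomorphic (in light of Theorem \ref{classification}).

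The main obstacle, and essentially the only non-bookkeeping step, is the simultaneous coordination between $\eta$ and $\gamma$: one must check that the specific $\gamma$ output by UMCT can be chosen so that the square commutes while \emph{not} preserving the DG order on the ideals. This is handled by leveraging the trivial-boundary hypothesis, which forces $\underline{\mathrm K}(\iota_i)$ to be order-preserving onto its image but does not require $\gamma$ to preserve order on all of $\underline{\mathrm K}(B_i)$; the ambiguity in extending an ordered isomorphism of images to the whole ideal is precisely what makes it possible for DG order preservation to fail on $B_i$ while the diagram still commutes and $\eta$ respects the order on $E_i$.
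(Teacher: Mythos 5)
Your proposal is correct and follows essentially the same route as the paper: the paper offers no separate proof of this theorem, only the remark that it is a reformulation of \cite[Theorem 3.10]{ALcounter} in which the pair $E_1,E_2$ (and the ordered $\Lambda$-isomorphism $\eta$ with the commuting square) come from that construction, and the non-order-preserving $\Lambda$-isomorphism $\gamma$ is induced by the KK-equivalence of $B_1,B_2$ via the UCT \cite{RS} and UMCT \cite{DL2}. Your elaboration of the clause-by-clause verification matches this citation-level argument.
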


Note that  Theorem \ref{weak class}--\ref{order is nece} are independent perturbation results of Theorem \ref{classification}. One may find that the only reasonable combination of perturbations we need to consider is as follows.
\begin{theorem}\label{beta+com}
There exist two non-isomorphic $C^*$-algebras $E_1, E_2$ in $\mathcal{E}$ satisfying that there exist  isomorphisms 
$$\gamma:\,(\underline{\rm K}(B_1),\underline{\rm K}(B_1)_+,\underline{\rm K}(B_1)_+)_{\underline{\rm K}_{\langle\beta\rangle}}
\cong (\underline{\rm K}(B_2),\underline{\rm K}(B_2)_+,\underline{\rm K}(B_2)_+)_{\underline{\rm K}_{\langle\beta\rangle}}
$$
and
$$\eta:\, (\underline{\rm K}(E_1),\underline{\rm K}(E_1)_+,[1_{E_1}])_{\Lambda}
\cong  (\underline{\rm K}(E_2),\underline{\rm K}(E_2)_+,[1_{E_2}])_{\Lambda}$$
such that the following diagram is commutative
$$
\xymatrixcolsep{2pc}
\xymatrix{
{\,\,{\mathrm{K}}_*(B_1)\,\,} \ar[r]^-{{\mathrm{K}}_*(\iota_1)}\ar[d]_-{\gamma_*}
& {\,\,{\mathrm{K}}_*(E_1)\,\,} \ar[d]_-{ \eta_*}
 \\
{\,\,{\mathrm{K}}_*(B_2) \,\,}\ar[r]_-{{\mathrm{K}}_*(\iota_2)}
& {\,\,{\mathrm{K}}_*(E_2)\,\,},}
$$
where $\gamma_*, \eta_*$ is the restriction maps of $\gamma, \eta$.
\end{theorem}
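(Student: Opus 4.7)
The plan is to construct a concrete counterexample $(E_1, E_2)$ in $\mathcal{E}^2$ realising the prescribed data while keeping $E_1 \not\cong E_2$. The conceptual setup is dictated by Theorems \ref{weak class} and \ref{beta not ness}: individually, passing from $\underline{\mathrm K}$-level to $K_*$-level commutativity of the diagram, or dropping $\beta$-compatibility of $\gamma$, still leaves classification intact. This theorem records that their simultaneous combination is strictly weaker.

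First I would arrange the ideal-level obstruction. Adapting the Dadarlat--Eilers construction from \cite{DEb}, I would produce stable A$\mathcal{HD}$ algebras $B_1, B_2$ of real rank zero and stable rank one together with an ordered scaled graded group isomorphism $\gamma : \underline{\mathrm K}(B_1) \to \underline{\mathrm K}(B_2)$ compatible with $\rho$ and $\kappa$ but not with $\beta$; by classification of stable A$\mathcal{HD}$ algebras, this forces $B_1 \not\cong B_2$, while $\gamma$ is a $\underline{\mathrm K}_{\langle\beta\rangle}$-isomorphism in the sense of the paper. Its $K_*$-restriction $\gamma_*$ is automatically an ordered scaled isomorphism, so it is a legitimate candidate for the bottom arrow of the $K_*$-level square.

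Next I would mount the extensions. Fix a simple unital A$\mathcal{HD}$ algebra $A$ of real rank zero and stable rank one with enough torsion in $K_*(A;\mathbb{Z}_n)$ to accommodate the $\beta$-discrepancy between $B_1$ and $B_2$, and realise unital essential extensions
$$
0 \to B_i \to E_i \to A \to 0, \quad i=1,2,
$$
with trivial boundary maps. Triviality of the boundary makes $\underline{\mathrm K}(E_i)$ a short exact sequence of $\Lambda$-modules with $\underline{\mathrm K}(B_i)$ as kernel and $\underline{\mathrm K}(A)$ as quotient, and I would select the extension classes so that an ordered scaled $\Lambda$-isomorphism $\eta: \underline{\mathrm K}(E_1) \to \underline{\mathrm K}(E_2)$ exists and the $K_*$-level square closes with $\gamma_*$ and $\eta_*$. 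The $\beta$-defect of $\gamma$ is to be hidden in the cross-terms contributed by $A$, which is possible precisely because the $K_*$-level diagram (unlike the $\underline{\mathrm K}$-level diagram of Theorem \ref{beta not ness}) cannot back-propagate the identity $\eta\circ\beta_{E_1}=\beta_{E_2}\circ\eta$ past $\iota_{i*}$ to force $\gamma\circ\beta_{B_1}=\beta_{B_2}\circ\gamma$.

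The main obstacle is the simultaneous realisability of this package of compatibilities by concrete A$\mathcal{HD}$ building blocks: one needs the abstract $\Lambda$-module extension with prescribed Bockstein action on the total space to come from an actual trivial-boundary extension of C*-algebras in $\mathcal{E}$. This reduces to a range-of-invariant computation for such extensions, which can be settled by a direct finite-cycle construction once the abstract data have been fixed, and the existence direction of Theorem \ref{classification} then supplies the required lift. Non-isomorphism $E_1 \not\cong E_2$ is immediate from the forward direction of Theorem \ref{classification}: any isomorphism $E_1 \cong E_2$ would restrict, via the unique proper ideal, to an ordered scaled $\Lambda$-isomorphism $B_1 \cong B_2$, contradicting the choice of $B_1, B_2$.
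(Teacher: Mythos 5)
Your strategy coincides with the paper's. The paper takes the Dadarlat--Eilers pair $E,E'$ of non-isomorphic real rank zero AD algebras admitting an ordered $\underline{\mathrm{K}}_{\langle\beta\rangle}$-isomorphism $\gamma$ (compatible with $\rho,\kappa$ but not $\beta$), sets $B_1=E\otimes\mathcal{K}$, $B_2=E'\otimes\mathcal{K}$, mounts unital trivial-boundary extensions over a simple AH algebra $\widetilde{A}_\chi$ with $\mathrm{K}_0(\widetilde{A}_\chi)=\mathbb{Q}\oplus\widetilde{\mathbf{Q}}/\widetilde{\mathbf{Z}}$ and $\mathrm{K}_1(\widetilde{A}_\chi)=0$ exactly as in Example \ref{rho example}, and gets $E_1\ncong E_2$ from the unique maximal ideals, just as you do. You also correctly isolate the mechanism: by Theorem \ref{beta aut thm} a $\Lambda$-isomorphism $\eta$ fitting into a $\underline{\mathrm{K}}$-commutative square would force $\gamma$ to respect $\beta$, so the square can only be closed at the $\mathrm{K}_*$ level. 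The one step carrying the real content, however, is asserted rather than carried out: you must actually exhibit an ordered scaled $\Lambda$-isomorphism $\eta$ compatible with $\gamma_*$ on $\mathrm{K}_*$, and this is not a formal consequence of any realization theorem. The paper does it by hand: since $\mathrm{K}_0(E_i)\cong\mathbb{Q}\oplus\widetilde{\mathbf{Q}}$ is divisible and torsion-free, the groups $\mathrm{K}_*(E_i;\mathbb{Z}_n)$ are controlled entirely by the torsion of $\mathrm{K}_1(E_i)\cong\mathrm{K}_1(B_i)$, and one writes down $\zeta$ coordinatewise (identity on $\mathrm{K}_0$, sign flips $(b,(b_m))\mapsto(b,(-b_m))$ on the tail coordinates of the mod-$n$ groups) and checks $\Lambda$-linearity and order preservation directly. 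Two smaller points: the extensions are realized via Theorem \ref{strong wei} (the $\mathrm{Ext}_{[1]}$ computation), not via ``the existence direction of Theorem \ref{classification}'', which runs in the wrong direction for this purpose; and the relevant feature of the quotient is not ``enough torsion in $\mathrm{K}_*(A;\mathbb{Z}_n)$'' but rather that its $\mathrm{K}_0$ rationalizes $\mathrm{K}_0(B_i)$ so that the mod-$n$ K-theory of $E_i$ collapses to something where the $\beta$-defect of $\gamma$ becomes invisible. As written, the proposal defers precisely the verification that makes the theorem true.
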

So Theorem \ref{weak class}, Theorem \ref{beta not ness} and Theorem \ref{beta+com} show that
the commutativity of total K-theory and the map $\beta$ form some common
support for the classification. That is,
if we have one of them, the framing is complete; but if we destroy the both, the classification will collapse.

Now we assemble all the results on the effectiveness of invariants into the following diagram:
\begin{table}[h]
  \centering
\begin{tabular}[h!]{|c|c|c|c|c|c|} \hline 
 \multirow{2}{*}{\diagbox{Results}{Cases}} & \multicolumn{3}{|c|}{Total K-theory of ideals} & Diagram & \multirow{2}{*}{Isomorphic}  \\  \cline{2-4}
 & $\,\,\,\,\,\,\rho\,\,\,\,\,\,$ &$\,\,\,\,\,\,\beta\,\,\,\,\,\,$ & Order &  commutativity &   \\ \hline
  Theorem \ref{classification} & \makecell[c]{Yes} & Yes & \makecell[c]{Yes} & $\underline{\mathrm{K}}$ & \makecell[c]{Yes}  \\ \hline
  Theorem \ref{weak class} & Yes & Yes & \makecell[c]{Yes}& \,\,$\mathrm{K}_*$ & \makecell[c]{Yes} \\ \hline
  Theorem \ref{rho is nece} & No& Yes &\makecell[c]{Yes} & $\underline{\mathrm{K}}$ &\makecell[c]{No} \\ \hline
   Theorem \ref{beta not ness} & Yes & -- & \makecell[c]{Yes}& $\underline{\mathrm{K}}$ & \makecell[c]{Yes} \\ \hline
    Theorem \ref{order is nece} & Yes & Yes & \makecell[c]{No}& $\underline{\mathrm{K}}$ & \makecell[c]{No} \\ \hline
    Theorem \ref{beta+com} & Yes & No & \makecell[c]{Yes} & $\,\,{\mathrm{K}_*}$ & \makecell[c]{No} \\ \hline
\end{tabular}
\end{table}

Here, we point out that our Theorem \ref{beta aut thm} implies that the $\underline{\rm K}_{\langle\beta\rangle}$-isomorphism in Theorem \ref{beta not ness}
is exactly an ordered $\Lambda$-isomorphism. Hence, in fact, Theorem \ref{beta not ness}
shares the same circumstance of Theorem \ref{classification}.


This series of results forms a discussion of necessities of the latticed isomorphisms in both \cite[Definition 6.11]{ALcounter} and \cite[Definition 5.6]{ALL}.

\section{Preliminaries}

\subsection{Terminologies}

\begin{notion}\rm
  Let $A$ be a unital $\mathrm{C}^*$-algebra. $A$ is said to have stable rank one, written $sr(A)=1$, if the set of invertible elements of $A$ is dense. $A$ is said to have real rank zero, written $rr(A)=0$, if the set of invertible self-adjoint elements is dense in the set $A_{sa}$ of self-adjoint elements of $A$. If $A$ is not unital, let us denote the unitization of $A$ by $\widetilde{A}$. A non-unital $\mathrm{C}^*$-algebra is said to have stable rank one (or real rank zero) if its unitization has stable rank one (or real rank zero).
\end{notion}

\begin{notion}\rm \label{AHD}{\rm (}\cite[2.2-2.3]{GJL}, \cite{GJL2}{\rm )}
We shall say a ${\rm C}^*$-algebra is an A$\mathcal{HD}$ algebra,
if it is an inductive limit of finite direct sums of algebras $M_n(\widetilde{\mathbb{I}}_p)$ and $PM_n(C(X))P$, where $$
\mathbb{I}_p=\{f\in M_p(C_0(0,1]):\,f(1)=\lambda\cdot1_p,\,1_p {\rm \,is\, the\, identity\, of}\, M_p\}
$$ is the Elliott-Thomsen dimension drop interval algebra
and $X$ is one of the following finite connected CW complexes: $\{pt\},~\mathbb{T},~[0, 1],~W_k.$ Here, $P\in M_n(C(X))$ is a projection and  for $k\geq 2$, $W_k$ denotes the Moore space obtained by attaching the unit disk to the circle by a degree $k$-map, such as $f_k: \mathbb{T}\rightarrow \mathbb{T}$, $e^{{\rm i}\theta}\mapsto e^{{\rm i}k\theta}$.  (This class of ${\rm C}^*$-algebras also play an important role in the classification of ${\rm C}^*$-algebras of ideal property \cite{GJLP1,GJLP2}.)

\end{notion}

\subsection{The total K-theory}

\begin{notion}\label{def k-total}\rm
 (\cite[Section 4]{DG}) For $n\geq 2$, the mod-$n$ K-theory groups are defined by
$$\mathrm{K}_* (A;\mathbb{Z}_n)=\mathrm{K}_*(A\otimes C_0(W_n)),$$
where $\mathbb{Z}_n:=\mathbb{Z}/n\mathbb{Z}$, $W_n$ denotes the Moore space obtained by attaching the unit disk to the circle by a degree $n$-map. The ${\rm C}^*$-algebra $C_0(W_n)$ of continuous functions vanishing at the base point is isomorphic to the mapping cone of the canonical map of degree $n$ from $C(\mathbb{T})$ to itself. For $n=0$, we set ${\rm K}_{*}(A ; \mathbb{Z}_n)=$ ${\rm K}_{*}(A)$ and for $n=1, {\rm K}_{*}(A ; \mathbb{Z}_n)=0$.

For a $\mathrm{C}^{*}$-algebra $A$, one defines the total K-theory of $A$ by
$$
\underline{{\rm K}}(A)=\bigoplus_{n=0}^{\infty} {\rm K}_{*}(A ; \mathbb{Z}_n) .
$$
It is a $\mathbb{Z}_2 \times \mathbb{Z}^{+}$graded group. For $m\in \mathbb{Z}$, denote $[m]_n=:m+n\mathbb{Z}\in\mathbb{Z}_n.$ It was shown in \cite{S} that the coefficient maps
$$
\begin{gathered}
\rho: \mathbb{Z} \rightarrow \mathbb{Z}_n, \quad \rho(1)=[1]_n, \\
\kappa_{mn, m}: \mathbb{Z}_m \rightarrow \mathbb{Z}_{mn}, \quad \kappa_{m n, m}([1]_m)=n[1]_{mn}, \\
\kappa_{n, m n}: \mathbb{Z}_{mn} \rightarrow \mathbb{Z}_n, \quad \kappa_{n, m n}([1]_{mn})=[1]_{n},
\end{gathered}
$$
induce natural transformations
$$
\rho_{n}^{j}: {\rm K}_{j}(A) \rightarrow {\rm K}_{j}(A ; \mathbb{Z}_n),
$$
$$
\kappa_{m n, m}^{j}: {\rm K}_{j}(A ; \mathbb{Z}_m) \rightarrow {\rm K}_{j}(A ; \mathbb{Z}_{mn}),
$$
$$
\kappa_{n, m n}^{j}: {\rm K}_{j}(A ; \mathbb{Z}_{mn}) \rightarrow {\rm K}_{j}(A ; \mathbb{Z}_n) .
$$
The Bockstein operation
$$
\beta_{n}^{j}: {\rm K}_{j}(A ; \mathbb{Z}_n) \rightarrow {\rm K}_{j+1}(A)
$$
appears in the six-term exact sequence
$$
{\rm K}_{j}(A) \stackrel{\times n}{\longrightarrow} {\rm K}_{j}(A) \stackrel{\rho_{n}^{j}}{\longrightarrow} {\rm K}_{j}(A ; \mathbb{Z}_n) \stackrel{\beta_{n}^{j}}{\longrightarrow} {\rm K}_{j+1}(A) \stackrel{\times n}{\longrightarrow} {\rm K}_{j+1}(A)
$$
induced by the cofibre sequence
$$
A \otimes S C_{0}\left(\mathbb{T}\right) \longrightarrow A \otimes C_{0}\left(W_{n}\right) \stackrel{\beta}{\longrightarrow} A \otimes C_{0}\left(\mathbb{T}\right) \stackrel{n}{\longrightarrow} A \otimes C_{0}\left(\mathbb{T}\right),
$$
where $S C_{0}\left(\mathbb{T}\right)$ is the suspension algebra of $C_{0}\left(\mathbb{T}\right)$.


The collection of all the transformations $\rho, \beta, \kappa$ and their compositions is denoted by $\Lambda$. $\Lambda$ can be regarded as the set of morphisms in a category whose objects are the elements of $\mathbb{Z}_2 \times \mathbb{Z}^{+}$.
 Abusing the terminology, $\Lambda$ will be called the category of Bockstein operations. Via the Bockstein operations, $\underline{{\rm K}}(A)$ becomes a $\Lambda$-module. It is natural to consider the group $\operatorname{Hom}_{\Lambda}(\underline{{\rm K}}(A), \underline{{\rm K}}(B))$ consisting of all $\mathbb{Z}_2 \times \mathbb{Z}^{+}$ graded group morphisms which are $\Lambda$-linear, i.e. preserve the action of the category $\Lambda$.



For any $n\in \mathbb{N},\,j=0,1$, if $\psi:\, A\to B$ is a homomorphism, we will denote
 $\underline{\rm K}(\psi):\, \underline{{\rm K}}(A)\to\underline{{\rm K}}(B)$ and ${\rm K}_j (\psi;\mathbb{Z}_n):\, {{\rm K}}_j(A;\mathbb{Z}_n)\to{{\rm K}}_j(B;\mathbb{Z}_n)$ to be the induced maps.

 If $\lambda:\,\underline{{\rm K}}(A)\to\underline{{\rm K}}(B)$ is a graded map, we will denote
$$
\lambda_n^j:\,{\rm K}_j(A;\mathbb{Z}_n)\to{\rm K}_j(B;\mathbb{Z}_n),\,\,n\in \mathbb{N},\,j=0,1
$$
to be the restriction maps.

\end{notion}

\begin{notion}\label{dg order}{\bf (Dadarlat-Gong order) ~}\rm
 Let  $A$ be  a separable C*-algebra of stable rank one. Denote by $\mathcal{K}$  the compact operators on a separable infinite-dimensional Hilbert space.
For any $[e]\in \mathrm{K}_0^+(A)$, denote
$I_e$  the ideal of $A\otimes\mathcal{K}$ generated by $e$ and
denote $\underline{\mathrm{K}}({I_e}\,|\,A)$ to be the image of $\underline{\mathrm{K}}({I_e})$ in $\underline{\mathrm{K}}(A)$, i.e.,
$$\underline{\mathrm{K}}({I_e}\,|\,A)=:
\underline{\mathrm{K}}(\iota_{I_e})(\underline{\mathrm{K}}({I_e}))\subset \underline{\mathrm{K}}(A),$$
where $\iota_{I_e}:\,I_e\to A\otimes\mathcal{K} $ is the natural embedding map. The following is a positive cone for total K-theory of $A$ (\cite[Definition 4.6]{DG}):
$$\textstyle
\underline{\mathrm{K}}(A)_+=\{([e],\mathfrak{u},
\bigoplus\limits_{n=1}^{\infty}(\mathfrak{s}_{n,0},\mathfrak{s}_{n,1})):[e]\in \mathrm{K}_0^+(A),([e],\mathfrak{u},\bigoplus\limits_{n=1}^{\infty}
(\mathfrak{s}_{n,0},\mathfrak{s}_{n,1}))\in \underline{\mathrm{K}}({I_e}\,|\, A)\},
$$
where $\mathfrak{u}\in {\rm K}_1(A)$, $\mathfrak{s}_{n,0}\in {\rm K}_0(A;\mathbb{Z}_n)$ and  $\mathfrak{s}_{n,1}\in {\rm K}_1(A;\mathbb{Z}_n)$ are equivalent classes. We will call this order structure by Dadarlat-Gong order.

In particular, we may also denote
$$\mathrm{K}_*^+(A)=:\mathrm{K}_*(A)\cap \underline{\mathrm{K}}(A)_+,$$
where $\mathrm{K}_*(A)$ is identified with its natural image in $\underline{\mathrm{K}}(A)$.
\end{notion}

\begin{theorem}{\rm (\cite[Porposition 4.8--4.9]{DG})}\label{ordertotal}
Suppose that $A$ is of stable rank one and has an approximate unit $(e_n)$ consisting
of projections. Then

{\rm (i)} $\underline{\mathrm{K}}(A)=\underline{\mathrm{K}}(A)_+-\underline{\mathrm{K}}(A)_+$;

{\rm (ii) } $\underline{\mathrm{K}}(A)_+\cap\{-\underline{\mathrm{K}}(A)_+\} = \{0\}$, and hence,
$(\underline{\mathrm{K}}(A),\underline{\mathrm{K}}(A)_+)$ is an ordered group;

{\rm (iii)} For any $x\in \underline{\mathrm{K}}(A)$, there are positive integers $k$, $n$ such that $k[e_n]+x \in \underline{\mathrm{K}}(A)_+$.
\end{theorem}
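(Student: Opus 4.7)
The plan is to prove (iii) first, since (i) is an immediate corollary and (ii) reduces to standard cancellation. The main lever is that the approximate unit $(e_m)$ yields an increasing chain of ideals $I_{e_1}\subseteq I_{e_2}\subseteq\cdots$ in $A\otimes\mathcal{K}$ whose union is dense (each $I_{e_m}$ contains $e_m A e_m\otimes e_{11}$, and $\bigcup_m e_m A e_m$ is dense in $A$). By continuity of K-theory with coefficients under inductive limits---valid since $-\otimes C_0(W_n)$ and $\mathrm{K}_j$ both preserve direct limits---we obtain $\underline{\mathrm{K}}(A)=\varinjlim_m\underline{\mathrm{K}}(I_{e_m})$, so every $x\in\underline{\mathrm{K}}(A)$ lies in $\underline{\mathrm{K}}(I_{e_m}\mid A)$ for all sufficiently large $m$.

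For (iii), fix $x\in\underline{\mathrm{K}}(A)$ and write its $\mathrm{K}_0$-component as $[x]_{\mathrm{K}_0}=[p]-[q]$ with $p,q$ projections in some $M_n(A)$. Choose $m$ large enough that both $x\in\underline{\mathrm{K}}(I_{e_m}\mid A)$ and $q\precsim\mathrm{diag}(e_m,\dots,e_m)$ in $M_n(A)$; the latter holds for large $m$ by the approximate-unit property together with standard projection perturbation. Let $s$ be a projection with $[s]=n[e_m]-[q]$, available because this class lies in $\mathrm{K}_0^+(A)$ and stable rank one identifies $\mathrm{K}_0^+(A)$ with $V(A)$. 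For any $k>n$, stable rank one lets us realise the orthogonal sum $r=(k-n)e_m\oplus s\oplus p$ in $M_\infty(A)$ with $[r]=k[e_m]+[p]-[q]$. Since $(k-n)e_m$ is a direct summand of $r$, we have $e_m\in I_r$, so $I_{e_m}\subseteq I_r$. Then $x+k[e_m]\in\underline{\mathrm{K}}(I_{e_m}\mid A)\subseteq\underline{\mathrm{K}}(I_r\mid A)$ has positive $\mathrm{K}_0$-class $[r]$, whence $x+k[e_m]\in\underline{\mathrm{K}}(A)_+$.

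Item (i) is then immediate: given $x$, take $y=x+k[e_m]\in\underline{\mathrm{K}}(A)_+$ from (iii) and note that $k[e_m]\in\underline{\mathrm{K}}(A)_+$ tautologically, so $x=y-k[e_m]$ is a difference of positives. For (ii), if $x,-x\in\underline{\mathrm{K}}(A)_+$ then both $[x]_{\mathrm{K}_0}$ and $-[x]_{\mathrm{K}_0}$ lie in $\mathrm{K}_0^+(A)$; stable rank one provides cancellation in $V(A)$, so $(\mathrm{K}_0(A),\mathrm{K}_0^+(A))$ is a proper ordered group, forcing $[x]_{\mathrm{K}_0}=0$. Cancellation then forces the representing projection $e$ to be zero, whence $I_e=0$ and $x\in\underline{\mathrm{K}}(I_e\mid A)=0$.

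The main obstacle is step (iii), specifically guaranteeing that the projection $r$ realising the class $k[e_m]+[p]-[q]$ generates an ideal containing $I_{e_m}$. A naive realisation of $r$ through the Grothendieck difference could yield a projection generating a strictly smaller ideal---for instance, if $[q]$ has support outside the ideal generated by $[p]$---and the argument would collapse. The fix is the two-step enlargement: first boost $m$ so that $n[e_m]\geq[q]$ globally, then peel off the projection class $[s]=n[e_m]-[q]$ and use stable rank one to express $r$ as the honest orthogonal sum $(k-n)e_m\oplus s\oplus p$. Only this concrete decomposition exhibits $e_m$ as a subprojection of $r$ and delivers the required ideal containment.
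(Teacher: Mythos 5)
This result is quoted in the paper from \cite[Propositions 4.8--4.9]{DG} without any proof, so there is no in-paper argument to compare against; your proposal is a correct, self-contained derivation directly from the definition of the Dadarlat--Gong cone, and it follows the standard route. The key step --- boosting $m$ so that $q\precsim e_m\oplus\cdots\oplus e_m$ and then exhibiting $r$ as an honest orthogonal sum containing $e_m$ as a summand, so that $I_{e_m}\subseteq I_r$ and $x+k[e_m]\in\underline{\mathrm{K}}(I_{e_m}\,|\,A)\subseteq\underline{\mathrm{K}}(I_r\,|\,A)$ --- is exactly what is needed, and your treatment of (ii) via cancellation in $V(A)$ under stable rank one is sound. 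The only point worth flagging is that the inductive-limit identification $\underline{\mathrm{K}}(A)=\varinjlim_m\underline{\mathrm{K}}(I_{e_m})$ uses that the ideals $I_{e_m}$ form a directed (indeed increasing) family, which requires the approximate unit of projections to be taken increasing --- the standard convention in this setting, but worth stating explicitly.
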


Denote by $\mathcal{N}$ the ``bootstrap'' category of \cite{RS}.
The following is immediately from \ref{dg order}:
\begin{proposition}\label{simple k* to k total}
Let $A_1,A_2$ be simple algebras of stable rank one in $\mathcal{N}$. Then for any
$$
\alpha^*:\,(\mathrm{K}_*(A_1),\mathrm{K}_*^+(A_1))\cong (\mathrm{K}_*(A_2),\mathrm{K}_*^+(A_2)),
$$
there exists an
$$
\underline{\alpha}:\,(\underline{\mathrm{K}}(A_1),\underline{\mathrm{K}}(A_1)_+)\cong (\underline{\mathrm{K}}(A_2),\underline{\mathrm{K}}(A_2)_+)
$$
whose restriction is $\alpha^*$.
\end{proposition}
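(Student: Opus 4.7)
The plan is to build $\underline{\alpha}$ in two stages: first produce \emph{some} graded group isomorphism $\underline{\alpha}: \underline{\mathrm{K}}(A_1) \to \underline{\mathrm{K}}(A_2)$ extending $\alpha^*$ (even as a $\Lambda$-isomorphism), and then check that the Dadarlat-Gong order is automatically preserved because simplicity collapses the ideal filtration in its definition. The proposition will then follow essentially formally from the UCT.

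For the first stage, I invoke the Rosenberg-Schochet UCT: since $A_1, A_2 \in \mathcal{N}$, the natural map
$$
\mathrm{KK}(A_1, A_2) \twoheadrightarrow \mathrm{Hom}(\mathrm{K}_*(A_1), \mathrm{K}_*(A_2))
$$
is surjective, and any preimage $x$ of the isomorphism $\alpha^*$ is automatically a KK-equivalence. Kasparov product with $x$ induces a $\Lambda$-module isomorphism $\underline{\alpha}: \underline{\mathrm{K}}(A_1) \to \underline{\mathrm{K}}(A_2)$ whose restriction to $\mathrm{K}_*$ is exactly $\alpha^*$; this is a standard consequence of UMCT. Non-uniqueness of the lift is not an issue because only existence is claimed.

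For the second stage, the key observation is that in a simple $\mathrm{C}^*$-algebra $A$ of stable rank one with an approximate unit of projections, every nonzero projection $e \in A \otimes \mathcal{K}$ generates all of $A \otimes \mathcal{K}$ as a closed two-sided ideal. Hence $I_e = A \otimes \mathcal{K}$ and $\underline{\mathrm{K}}(I_e \,|\, A) = \underline{\mathrm{K}}(A)$; on the other hand, if $[e] = 0$ then stable finiteness (which holds for simple stable rank one algebras) forces the representative $e = 0$ and $I_e = 0$. Plugging these observations into Definition \ref{dg order}, the Dadarlat-Gong cone collapses to
$$
\underline{\mathrm{K}}(A)_+ \;=\; \{0\} \,\cup\, \{x \in \underline{\mathrm{K}}(A) : \pi_0(x) \in \mathrm{K}_0^+(A) \setminus \{0\}\},
$$
where $\pi_0$ denotes projection onto the $\mathrm{K}_0$-coordinate. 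In particular the positive cone depends only on $(\mathrm{K}_0(A), \mathrm{K}_0^+(A))$.

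Applying this description to both $A_1$ and $A_2$, and using that $\underline{\alpha}$ restricts to $\alpha^*$ on $\mathrm{K}_*$ (which is ordered by assumption), we conclude that $\underline{\alpha}$ carries $\underline{\mathrm{K}}(A_1)_+$ bijectively onto $\underline{\mathrm{K}}(A_2)_+$, as required. The only real content is the collapse of the cone in the second stage; once that is established, the statement is immediate. I expect the main subtlety to be the careful handling of the $[e] = 0$ case in the definition of $\underline{\mathrm{K}}(A)_+$, which requires invoking stable finiteness to ensure the resulting positive cone depends only on $\mathrm{K}_0^+$ and not on finer ideal data.
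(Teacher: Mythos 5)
Your proposal is correct and is essentially the argument the paper intends: the paper states the proposition as ``immediate from'' the definition of the Dadarlat--Gong order, which is precisely your observation that simplicity (plus stable finiteness to handle $[e]=0$) collapses the cone to $\{0\}\cup\{x:\pi_0(x)\in\mathrm{K}_0^+(A)\setminus\{0\}\}$, with the graded extension of $\alpha^*$ supplied by the UCT/UMCT lifting. Nothing in your write-up deviates from this in substance.
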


\subsection{Extension and K-pureness}

\begin{definition}\label{trivial boundary def}\rm
Let
$
e: 0 \rightarrow B \rightarrow E \rightarrow A \rightarrow 0
$
be an extension  of $\mathrm{C}^{*}$-algebras.
Denote by $\delta_{j}: \mathrm{K}_{j}(A) \rightarrow \mathrm{K}_{1-j}(B)$, $j=0,1$, the boundary maps of  six-term exact sequence.
We say $e$ has $trivial$ $boundary$ $maps$, if both $\delta_0$ and $\delta_1$ are the zero map.

Suppose both $A,B$ are of stable rank one and real
rank zero; by \cite[Proposition 4]{LR}, $E$ has stable rank one and real
rank zero if, and only if, $e$ has trivial boundary maps. Thus, if $E\in \mathcal{E}$,  then $sr(E)=1$ and $rr(E)=0$. 
\end{definition}
\begin{definition} {\rm (}\cite[Lemma 4.1]{ALpams}{\rm )}\label{def k pure}\rm
Let us say that a ${\rm C}^*$-algebra $A$ is K-$pure$, if for any ideal $I$ of $A$,
the natural map from $\underline{\mathrm{K}}(I)$ to $\underline{\mathrm{K}}(A)$ is always injective.
\end{definition}




It has been shown in \cite[Proposition 4.4]{DE} that all  A$\mathcal{HD}$ algebras of real rank zero are K-pure.


\begin{definition}\rm
Let $A$, $B$ be $C^*$-algebras and
$e: 0 \to B \to E \to A \to 0 $
be an extension of $A$ by $B$ with Busby invariant $\tau:\,A\to Q(B)$.
We say extension $e$ is essential, if $\tau$ is injective.
If $A$ is unital, we say extension $e$ is unital, if $\tau$ is unital.
\end{definition}
\begin{definition}\rm
Let
$e_i:0 \to B \to E_i \to A \to 0$ ($i=1,2$)
be two
extensions of $A$ by $B$ with Busby invariants $\tau_i$ for $i = 1, 2$.  We say  $e_1$ and $e_2$ are  $strongly$ $unitarily$ $equivalent$, denoted by $e_1
\sim_s e_2$,
if there exists a unitary $u \in M(B)$ such that $\tau_2(a) = \pi(u)\tau_1(a)\pi(u)^*$ for all $a \in A$,
where $\pi:\,M(B)\to M(B)/B.$ It is well-known that $e_1
\sim_s e_2$ implies $E_1\cong E_2$.


\end{definition}


Denote  by ${\rm Text}_{s}^u(A,B)$ the set of strongly unitary equivalence classes of all the unital extensions of $A$ by $B$ with trivial boundary maps.
\begin{theorem}[Theorem 3.5 of \cite{ALpams}]\label{strong wei}
Let $A,B$ be nuclear separable ${\rm C}^*$-algebras of stable rank one and real rank zero with $A\in \mathcal{N}$. 
Assume that  $A$ is unital simple,  $B$ is stable and $({\rm K}_0(B),{\rm K}_0^+(B))$  is weakly unperforated.
Then  we have
$$
\mathrm{Ext}_{[1]}(\mathrm{K}_*(A), \mathrm{K}_*(B))\cong {\rm Text}_{s}^u(A,B).
$$
\end{theorem}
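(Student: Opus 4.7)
The plan is to identify both sides with a common Kasparov-theoretic object and then cut down by the two extra conditions (trivial boundary and unital). First, I would invoke the classical identification between strongly unitary equivalence classes of essential extensions and the Kasparov group: for separable nuclear $A\in\mathcal{N}$ and separable stable $B$, sending an essential extension with Busby invariant $\tau:A\to Q(B)$ to its class yields a bijection
$$\mathrm{Text}_s(A,B)\cong \mathrm{KK}^1(A,B),$$
and under this identification the pair of boundary maps $(\delta_0,\delta_1)$ of the six-term sequence corresponds to the image of the class under the natural map $\mathrm{KK}^1(A,B)\to \mathrm{Hom}(\mathrm{K}_*(A),\mathrm{K}_{*+1}(B))$. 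I would treat strong (rather than weak) unitary equivalence as the relation matching Baer equivalence on the algebraic side; this is standard since we are working with $B$ stable and with the full multiplier/corona picture.

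Next, by the UCT of Rosenberg--Schochet \cite{RS} applied to $A\in\mathcal{N}$, I obtain the short exact sequence
$$0\longrightarrow \mathrm{Ext}^1_{\mathbb{Z}}(\mathrm{K}_*(A),\mathrm{K}_*(B))\longrightarrow \mathrm{KK}^1(A,B)\longrightarrow \mathrm{Hom}(\mathrm{K}_*(A),\mathrm{K}_{*+1}(B))\longrightarrow 0.$$
Combining with the first step, extensions with trivial boundary maps are precisely the elements of $\mathrm{Text}_s(A,B)$ that lie in the kernel of the right-hand map, which is exactly $\mathrm{Ext}^1_{\mathbb{Z}}(\mathrm{K}_*(A),\mathrm{K}_*(B))$. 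Then I would restrict to the unital case: a unital extension $e:0\to B\to E\to A\to 0$ with trivial boundary maps gives an extension of graded abelian groups
$$0\to \mathrm{K}_*(B)\to \mathrm{K}_*(E)\to \mathrm{K}_*(A)\to 0$$
together with the distinguished element $[1_E]\in \mathrm{K}_0(E)$ lifting $[1_A]$, i.e.\ an element of $\mathrm{Ext}_{[1]}(\mathrm{K}_*(A),\mathrm{K}_*(B))$. Conversely, starting from a pointed Ext-class, the previous steps realize it by some essential extension; adjusting by a Cuntz sum with a split unital extension produces a unital representative, and weak unperforation of $(\mathrm{K}_0(B),\mathrm{K}_0^+(B))$ guarantees that the prescribed lift of $[1_A]$ can be realized as the $\mathrm{K}_0$-class of an actual projection $1_E$ in $E$.

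The main obstacle is the final unital refinement, not the $\mathrm{Ext}^1$ identification itself. Well-definedness and injectivity of the map from unital-strong-equivalence classes to $\mathrm{Ext}_{[1]}$ follow from the first two steps, but surjectivity requires both that unit lifts exist (weak unperforation, together with real rank zero of $B$, gives enough projections in $B$ to adjust the lift) and that the adjusted extension still has trivial boundary maps. Here Proposition 4 of Lin--R\o rdam, recalled in the excerpt, is essential: it shows that the trivial-boundary condition is equivalent to $E$ itself being of real rank zero and stable rank one, which in turn ensures that projections in $A$ lift to $E$, so the unital structure is compatible with the algebraic extension. The remaining verification — that two unital extensions whose pointed K-theoretic extensions agree differ by a strongly unitary equivalence — reduces to the vanishing direction of the UCT for split unital extensions and the stability of $B$ (which absorbs the correction unitary).
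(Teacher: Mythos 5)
You should first be aware that the paper contains no proof of Theorem \ref{strong wei}: it is imported verbatim as Theorem 3.5 of \cite{ALpams}, so there is no internal argument to compare against. Judged on its own terms, your strategy --- Busby invariants and absorption, then the UCT to isolate the trivial-boundary classes as $\mathrm{Ext}^1_{\mathbb{Z}}(\mathrm{K}_*(A),\mathrm{K}_*(B))$, then a unital/pointed refinement --- is the standard route and is essentially the one the cited reference follows.

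There is, however, one genuine gap: the assertion that ``well-definedness and injectivity \dots follow from the first two steps.'' Your first two steps identify the trivial-boundary part of $\mathrm{Text}_s(A,B)$, i.e.\ strong unitary equivalence classes of \emph{not necessarily unital} essential extensions, with $\mathrm{Ext}^1_{\mathbb{Z}}(\mathrm{K}_*(A),\mathrm{K}_*(B))$ inside $\mathrm{KK}^1(A,B)$. That is not the group classifying \emph{unital} extensions up to strong unitary equivalence. By the unital absorption theorem, those are classified by the unital Ext-group $\mathrm{Ext}_u(A,B)$ of Gabe--Ruiz \cite{GR}, which sits in an exact sequence
$$\mathrm{KK}(A,B)\xrightarrow{\;\cdot[1_A]\;}\mathrm{K}_0(B)\longrightarrow \mathrm{Ext}_u(A,B)\longrightarrow \mathrm{KK}^1(A,B)\longrightarrow 0,$$
so two unital trivial-boundary extensions can have the same class in $\mathrm{KK}^1(A,B)$ --- hence the same underlying algebraic $\mathrm{Ext}^1$ class --- and still fail to be strongly unitarily equivalent; the obstruction is an element of $\mathrm{K}_0(B)$ modulo the image of evaluation at $[1_A]$. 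The real content of the theorem is that this $\mathrm{K}_0(B)$-ambiguity matches exactly the ambiguity in the choice of distinguished lift of $[1_A]$ that distinguishes $\mathrm{Ext}_{[1]}(\mathrm{K}_*(A),\mathrm{K}_*(B))$ from $\mathrm{Ext}^1_{\mathbb{Z}}(\mathrm{K}_*(A),\mathrm{K}_*(B))$; one must produce an isomorphism of these two ``extensions of $\mathrm{Ext}^1$ by a $\mathrm{K}_0(B)$-quotient,'' not merely of their bases. Your final paragraph gestures at this but never carries it out, and it contradicts the earlier claim that injectivity is already settled. Your account of surjectivity --- using real rank zero and weak unperforation of $(\mathrm{K}_0(B),\mathrm{K}_0^+(B))$ to realize the prescribed lift of $[1_A]$ as the class of a genuine unit, via the computation of $\mathrm{K}_0^+(E)$ as in Theorem \ref{yibande k0+} --- is the right idea and is where those hypotheses actually enter.
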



\section{Proof of Theorem 1.4}

In order to demonstrate Theorem \ref{classification}, it is sufficient to establish its stronger iteration, namely, Theorem \ref{weak class}.

\begin{proof}[Proof of Theorem \ref{weak class}]
Suppose there exist $\Lambda$-isomorphisms
$$\gamma:\,(\underline{\rm K}(B_1),\underline{\rm K}(B_1)_+,\underline{\rm K}(B_1)_+)_{\Lambda}
\cong (\underline{\rm K}(B_2),\underline{\rm K}(B_2)_+,\underline{\rm K}(B_2)_+)_{\Lambda}
$$
and
$$\eta:\, (\underline{\rm K}(E_1),\underline{\rm K}(E_1)_+,[1_{E_1}])_{\Lambda}
\cong (\underline{\rm K}(E_2),\underline{\rm K}(E_2)_+,[1_{E_2}])_{\Lambda}$$
such that the following diagram is commutative
$$
\xymatrixcolsep{2pc}
\xymatrix{
{\,\,{\mathrm{K}}_*(B_1)\,\,} \ar[r]^-{{\mathrm{K}}_*(\iota_1)}\ar[d]_-{\gamma_*}
& {\,\,{\mathrm{K}}_*(E_1)\,\,} \ar[d]_-{ \eta_*}
 \\
{\,\,{\mathrm{K}}_*(B_2) \,\,}\ar[r]_-{{\mathrm{K}}_*(\iota_2)}
& {\,\,{\mathrm{K}}_*(E_2)\,\,}.}
$$

Since $B_1$ and $B_2$ are A$\mathcal{HD}$ algebras of real rank zero, there exists an isomorphism
$\phi_0:\,B_1\to B_2$ such that $ \underline{\mathrm{K}}(\phi_0)=\gamma$. By assumption,
we have the following diagram (not necessarily commutative):
$$\xymatrixcolsep{2pc}
\xymatrix{
 {\,\,\underline{\mathrm{K}}(B_1)\,\,} \ar[d]_-{\underline{\mathrm{K}}(\phi_0)}\ar[r]^-{\mathrm{K}_0(\iota_1)}
& {\,\,(\underline{\mathrm{K}}(E_1),[1_{E_1}])\,\,} \ar[d]_-{\eta} \ar[r]^-{\mathrm{K}_0(\pi_1)}
& {\,\,(\underline{\mathrm{K}}(A_1),[1_{A_1}])\,\,}\\
{\,\,\underline{\mathrm{K}}(B_2)\,\,} \ar[r]_-{\mathrm{K}_0(\iota_2)}
& {\,\,(\underline{\mathrm{K}}(E_2),[1_{E_2}]) \,\,} \ar[r]_-{\mathrm{K}_0(\pi_2)}
& {\,\,(\underline{\mathrm{K}}(A_2),[1_{A_2}]) \,\,} .}
$$
Restricting to $\mathrm{K}_*$, the following diagram is commutative with exact rows:
$$
\xymatrixcolsep{2pc}
\xymatrix{
{\,\,0\,\,} \ar[r]^-{}
& {\,\,\mathrm{K}_*(B_1)\,\,} \ar[d]_-{\mathrm{K}_*(\phi_0)} \ar[r]^-{\mathrm{K}_*(\iota_1)}
& {\,\,(\mathrm{K}_*(E_1),[1_{E_1}])\,\,} \ar[d]_-{\eta_*} \ar[r]^-{\mathrm{K}_*(\pi_1)}
& {\,\,(\mathrm{K}_*(A_1),[1_{A_1}])\,\,} \ar[d]_-{\varrho} \ar[r]^-{}
& {\,\,0\,\,} \\
{\,\,0\,\,} \ar[r]^-{}
& {\,\,\mathrm{K}_*(B_2)\,\,} \ar[r]_-{  \mathrm{K}_*(\iota_2)}
& {\,\,(\mathrm{K}_*(E_2),[1_{E_2}]) \,\,} \ar[r]_-{\mathrm{K}_*(\pi_2)}
& {\,\,(\mathrm{K}_*(A_2),[1_{A_2}]) \,\,} \ar[r]_-{}
& {\,\,0\,\,},}
$$ 
where $\mathrm{K}_*(\phi_0)$ is induced by $\phi_0$
and  $\varrho$ is the induced map obtained from $\mathrm{K}_*(\phi_0)$ and $\eta_*$.

Note that both $\mathrm{K}_*(\phi_0)$ and $\eta_*$ are scaled  order-preserving maps, \cite[Proposition 4]{LR} implies that
 $\varrho$ is also a scaled  order-preserving map.
  By Proposition \ref{simple k* to k total},
we can lift $\varrho$ to an isomorphism $\phi_1:\,A_1\to A_2$.
Now we have the following commutative diagram with exact rows:
$$
\xymatrixcolsep{2pc}
\xymatrix{
{\,\,0\,\,} \ar[r]^-{}
& {\,\,\mathrm{K}_*(B_1)\,\,} \ar[d]_-{{\rm id}} \ar[r]^-{\mathrm{K}_*(\iota_1)}
& {\,\,(\mathrm{K}_*(E_1),[1_{E_1}])\,\,} \ar[d]_-{\eta_*} \ar[r]^-{\mathrm{K}_*(\phi_1\circ \pi_1) }
& {\,\,(\mathrm{K}_*(A_2),[1_{A_2}])\,\,} \ar[d]_-{{\rm id}} \ar[r]^-{}
& {\,\,0\,\,\,} \\
{\,\,0\,\,} \ar[r]^-{}
& {\,\,\mathrm{K}_*(B_1)\,\,} \ar[r]_-{ \mathrm{K}_*( \iota_2\circ\phi_0)}
& {\,\,(\mathrm{K}_*(E_2),[1_{E_2}]) \,\,} \ar[r]_-{\mathrm{K}_*(\pi_2)}
& {\,\,(\mathrm{K}_*(A_2),[1_{A_2}]) \,\,} \ar[r]_-{}
& {\,\,0\,\,}.}
$$

Then by Theorem \ref{strong wei} (the naturality comes from \cite[Theorem 4.14]{GR}), the two extensions
$$
0\to B_1 \xrightarrow{\iota_1} E_1\xrightarrow{\phi_1\circ \pi_1} A_2\to 0
$$
and
$$
0\to B_1 \xrightarrow{\iota_2\circ \phi_0} E_2\xrightarrow{\pi_2} A_2\to 0
$$
are strongly unitarily equivalent,  then $E_1\cong E_2$.

\end{proof}

\section{The construction of Dadarlat-Eilers}
Before we raise the example of Theorem \ref{rho is nece}, we list a construction of Dadarlat-Eilers, which they applied to reveal the necessity of $\rho$. 
Their example has the commutativity on ${\rm K}_0(-;\mathbb{Z}_3)$, but we point out that the commutativity fails on ${\rm K}_0(-;\mathbb{Z}_9)$.


\subsection{Continuous field $C^*$-algebras}

\begin{definition}\label{construction F}\rm
We write $\alpha X$ for the one point (Alexandroff) compactification of a locally compact space $X$,
and denote the point at infinity by $\infty_X$. 
Suppose that $\{\phi_m\}: A \to B$ is a family of *-homomorphisms, we denote by
$$\mathcal{F}[(\phi_m)]\quad {\rm or}\quad \mathcal{F}[\phi_1, \phi_2, \phi_3, \phi_4,\cdots]$$
the C*-algebra
$$\{(a,(b_m))\in A\oplus \prod_{m=1}^\infty B:\, \|b_m-\phi_m(a)\|\to 0\}.
$$

Note that $\mathcal{F}[(\phi_m)]$ is an inductive limit of $C^*$-algebras of the form $A\oplus \bigoplus_1^{m}B$ using bonding maps of the form
$$
\chi_m(a,b_1,b_2,\cdots,b_m)=(a,b_1,b_2,\cdots,b_m,\phi_{m+1}(a)).
$$
\end{definition}
\begin{definition}\rm
  For real rank zero $\mathrm{AD}$ algebras with $n\cdot$tor\,${\rm K}_1(A)=0$,
$$
\mathbf{K}(A; n): {\rm K}_0(A) \xrightarrow{\rho_n^0} {\rm K}_0(A ; \mathbb{Z}_n) \xrightarrow{\beta_n^0} {\rm K}_1(A)
$$
is a complete invariant \cite{Ei}. For any *-homomorphism $\phi:A\to B$, $\phi$ induces $\underline{\rm K}(\phi):\underline{\rm K}(A)\to \underline{\rm K}(B)$, we identify $\mathbf{K}(\phi; n)$ as the triple
$$
({\rm K}_0(\phi),  {\rm K}_0(\phi ; \mathbb{Z}_n), {\rm K}_1(\phi)).
$$
We denote the resulting invariant $\mathbf{K}_{\langle\rho\rangle}(A; n)$, if one  deletes the $\rho_n^0$ map from $\mathbf{K}(A; n)$.
\end{definition}

\begin{notion}\rm \label{def AB}

Let us recall the construction of $D$ and $D'$ in \cite[Section 2]{DEb}. For convenience, we just concentrate on the algebras with $\mathbf{n=3}$.

Let $\{t_k\}$ be a sequence dense in $(0,1)$, define $\psi_k
: \widetilde{\mathbb{I}}_3\rightarrow M_4(\widetilde{\mathbb{I}}_3)$ by $\psi_k(f)={\rm diag}\{f,f(t_k),f(t_k),f(t_k)\}$, set
$$
\varphi_k=\psi_k\otimes {\rm id}: M_{4^{k+1}}(\widetilde{\mathbb{I}}_3)\rightarrow M_{4^{k+2}}(\widetilde{\mathbb{I}}_3).
$$
Let $A=\lim\limits_{\longrightarrow}(M_{4^{k+1}}(\widetilde{\mathbb{I}}_3),\varphi_k)$, then
$A$ is a unital simple AD algebra of stable rank one and real rank zero, and
$$(\mathrm{K}_0(A), \mathrm{K}_0^+(A), [1_A], \mathrm{K}_1(A))\cong (\mathbb{Z}[\frac{1}{4}],\mathbb{Z}[\frac{1}{4}]\cap \mathbb{R}_+,1,\mathbb{Z}_3),$$
where $\mathbb{Z}[\frac{1}{4}]=:\{\frac{j}{4^i}:\, i\in\mathbb{N},\,j\in \mathbb{Z}\}$ ($\mathbb{Z}[\frac{1}{4}]\cong \mathbb{Z}[\frac{1}{2}]=:\{\frac{j}{2^i}:\, i\in\mathbb{N},\,j\in \mathbb{Z}\}$)
and $\mathbb{R}_+=:[0,+\infty)$.

Denote $B$ to be the unital UHF algebra $M_{4^\infty}\otimes M_3$ with
$$(\mathrm{K}_0(B), \mathrm{K}_0^+(B), [1_B])\cong (\mathbb{Z}[\frac{1}{4}],\mathbb{Z}[\frac{1}{4}]\cap \mathbb{R}_+,3).$$
\end{notion}
\begin{notion}\label{DE con}\rm
 Choosing suitable generators, identify
$$\mathbf{K}(A; 3): \,\mathbb{Z}[ \frac{1}{4}]\xrightarrow{(\rho_A)_3^{0}}
\mathbb{Z}_3\oplus \mathbb{Z}_3
\xrightarrow{(\beta_A)_{3}^{0}} \mathbb{Z}_3,
$$$$
\mathbf{K}(B; 3):\, \mathbb{Z}[ \frac{1}{4}]
\xrightarrow{(\rho_B)_{3}^{0}}
 \mathbb{Z}_3
\xrightarrow{(\beta_B)_{3}^{0}} 0,$$
where
$$(\rho_A)_{3}^{0}\,(1) = ([1]_3, 0),\quad (\beta_A)_{3}^{0}\,(u, v) = v, \quad (\rho_B)_{3}^{0}\,(1) = [1]_3, \quad
(\beta_B)_{3}^{0}\,(u) = 0.$$
By a one-sided version of \cite[3.6]{DL2}, one can choose unital $*$-homomorphisms $\varphi,\varphi' :\, A \to B$
with
$$\mathbf{K}(\varphi; 3) = (3,
(0 ,1) , 0), \quad \mathbf{K}(\varphi' ; 3) = (3,
( 0 ,-1), 0).$$
Now let $D, D'$ be the C*-algebras (see \ref{construction F})
$$D = \mathcal{F}[\varphi, \varphi, \varphi, \varphi,\cdots],\quad D' = \mathcal{F}[\varphi,\varphi',\varphi,\varphi', \cdots].$$

Note that $D,D'$ have real rank zero and stable rank one. By \cite[Theorem 2.3]{DEb}, $ D\ncong D'$. Moreover,  by \cite[Theorem 9.1--9.3]{DG},
$$
 (\underline{\mathrm{K}}(D),\underline{\mathrm{K}}(D)_+)
 \ncong(\underline{\mathrm{K}}(D'),\underline{\mathrm{K}}(D')_+)
$$
as ordered $\Lambda$-modules.

We would also mention that for $k=9$, by choosing suitable generators, we have
$$
\mathrm{K}_0(A;\mathbb{Z}_9)\cong\mathbb{Z}_9\oplus \mathbb{Z}_3,\,\,\,\, \mathrm{K}_0(B;\mathbb{Z}_9)\cong\mathbb{Z}_9
$$ and
$$\mathbf{K}(A; 9): \,\mathbb{Z}[ \frac{1}{4}]\xrightarrow{(\rho_A)_{9}^{0}}
\mathbb{Z}_9\oplus \mathbb{Z}_3
\xrightarrow{(\beta_A)_{9}^{0}} \mathbb{Z}_3,
$$$$
\mathbf{K}(B; 9):\, \mathbb{Z}[ \frac{1}{4}]
\xrightarrow{(\rho_B)_{9}^{0}}
 \mathbb{Z}_9
\xrightarrow{(\beta_B)_{9}^{0}} 0,$$
where
$$(\rho_A)_{9}^{0}\,(1) = ([1]_9, 0),\,\,\,\, (\beta_A)_{9}^{0}\,(u, v) = v, \,\,\,\,(\rho_B)_{3}^{0}\,(1) = [1]_9, \,\,\,\,
(\beta_B)_{9}^{0}\,(u) = 0.$$

Note that the induced diagrams by
$\varphi,\varphi' :\, A \to B$ are
\begin{displaymath}
\xymatrixcolsep{4pc}
\xymatrix{
 \mathrm{K}_0(A)  \ar[r]^-{(\rho_A)_9^0}\ar[d]^-{3}&
 \mathrm{K}_0(A;\mathbb{Z}_9)  \ar[d]^-{(3,\iota)}\ar[r]^-{(\beta_A)_9^0} & \mathrm{K}_1(A)  \ar[d]^-{0} \\
 \mathrm{K}_0(B)  \ar[r]^-{(\rho_B)_9^0}&
 \mathrm{K}_0(B;\mathbb{Z}_9)  \ar[r]^-{(\beta_B)_9^0}& \mathrm{K}_1(B)
}
\end{displaymath}
and
\begin{displaymath}
\xymatrixcolsep{4pc}
\xymatrix{
 \mathrm{K}_0(A)  \ar[r]^-{(\rho_A)_9^0}\ar[d]^-{3}&
 \mathrm{K}_0(A;\mathbb{Z}_9)  \ar[d]^-{(3,-\iota)}\ar[r]^-{(\beta_A)_9^0} & \mathrm{K}_1(A)  \ar[d]^-{0} \\
 \mathrm{K}_0(B)  \ar[r]^-{(\rho_B)_9^0}&
 \mathrm{K}_0(B;\mathbb{Z}_9)  \ar[r]^-{(\beta_B)_9^0}& \mathrm{K}_1(B),
}
\end{displaymath}
respectively, where $\iota:\,\mathbb{Z}_3\to \mathbb{Z}_9$ with $\iota([1]_3)=[3]_9$.
So we have
$$\mathbf{K}(\varphi; 9) = (3,
(3 ,\iota) , 0)\neq (3,
(0 ,\iota) , 0)$$
and
$$\mathbf{K}(\varphi' ; 9) = (3,
( 3 ,-\iota), 0)\neq (3,
( 0 ,-\iota), 0).$$
\end{notion}
\subsection{Check the commutativity}

\begin{notion}\label{3 and 9}\rm



Denote $$l_k=\min \{\frac{k}{2^j}\in \mathbb{N}|\,j\in \mathbb{N}\},$$
i.e,
$$
(l_1,l_2,l_3,l_4,l_5,l_6,l_7,l_8,\cdots)=(1,1,3,1,5,3,7,1,\cdots).
$$
For each $k\geq1$,
via  the cofibre sequence, we have the following exact sequence
$$
\xymatrixcolsep{3pc}
\xymatrix{
{\mathbb{Z}[\frac{1}{4}]}  \ar[r]^-{\times k}
& {\mathbb{Z}[\frac{1}{4}]}  \ar[r]^-{}
& {\mathrm{K}_0(A; \mathbb{Z}_k)} \ar[d]_-{}
 \\
{\mathrm{K}_1(A; \mathbb{Z}_k)} \ar[u]_-{}
& {\mathbb{Z}_3} \ar[l]_-{}
& {\mathbb{Z}_3} \ar[l]_-{\times k},
}
$$
and by choosing suitable generators, we have
$$
{\mathrm{K}}_0(A;\mathbb{Z}_k)=
\begin{cases}
\mathbb{Z}_{l_k}\oplus \mathbb{Z}_3, & \mbox{if } 3\mid k \\
  \mathbb{Z}_{l_k}, & \mbox{if } 3\nmid k
\end{cases}
$$
and
$$
{\mathrm{K}}_1(A;\mathbb{Z}_k)=
\begin{cases}
\mathbb{Z}_3, & \mbox{if } 3\mid k \\
  0, & \mbox{if } 3\nmid k
\end{cases}.
$$
Similarly, for $k\geq1$, one may obtain
$$
{\mathrm{K}}_0(B;\mathbb{Z}_k)=
\mathbb{Z}_{l_k}, \quad {\mathrm{K}}_1(B;\mathbb{Z}_k)=0.
$$
Furthermore, we have
$$\mathrm{K}_0(\varphi)=\mathrm{K}_0(\varphi'):\, \mathbb{Z}[\frac{1}{4}]\xrightarrow{\times3} \mathbb{Z}[\frac{1}{4}]\quad
{\rm and}\quad
\mathrm{K}_1(\varphi)=\mathrm{K}_1(\varphi')=0;$$
If $3\mid k$, for $y=0,1,2$, we have
$$
\mathrm{K}_0(\varphi;\mathbb{Z}_k)([x]_{l_k},[y]_3)= 3\cdot [x]_{l_k}+y\cdot[\frac{{l_k}}{3}]_{l_k},
$$
and
$$\mathrm{K}_0(\varphi';\mathbb{Z}_k)([x]_{l_k},[y]_3)= 3\cdot [x]_{l_k}-y\cdot[\frac{{l_k}}{3}]_{l_k}.
$$
If $3\nmid k$, we have
$$
\mathrm{K}_0(\varphi;\mathbb{Z}_k)=\mathrm{K}_0(\varphi';\mathbb{Z}_k)=\times 3.
$$

 We point out that the following diagram is commutative for $k=3$, but not for $k=9$.
$$
\xymatrixcolsep{2pc}
\xymatrix{
{\,\,\mathrm{K}_0(A;\mathbb{Z}_k)\,\,} \ar[r]^-{\mathrm{K}_0(\varphi;\mathbb{Z}_k)}\ar[d]_-{id}
& {\,\,\mathrm{K}_0(B;\mathbb{Z}_k)\,\,} \ar[d]_-{ -id}
 \\
{\,\,\mathrm{K}_0(A;\mathbb{Z}_k) \,\,}\ar[r]_-{\mathrm{K}_0(\varphi';\mathbb{Z}_k)}
& {\,\,\mathrm{K}_0(B;\mathbb{Z}_k)\,\,}.}
$$

For $k=3$, the diagram
$$
\xymatrixcolsep{2pc}
\xymatrix{
{\,\,\mathrm{K}_0(A;\mathbb{Z}_3)\cong \mathbb{Z}_3\oplus \mathbb{Z}_3\,\,} \ar[r]^-{\mathrm{K}_0(\varphi;\mathbb{Z}_3)}\ar[d]_-{id}
& {\,\,\mathrm{K}_0(B;\mathbb{Z}_3)\cong \mathbb{Z}_3\,\,} \ar[d]_-{ -id}
  \\
{\,\,\mathrm{K}_0(A;\mathbb{Z}_3)\cong \mathbb{Z}_3\oplus \mathbb{Z}_3 \,\,}\ar[r]_-{\mathrm{K}_0(\varphi';\mathbb{Z}_3)}
& {\,\,\mathrm{K}_0(B;\mathbb{Z}_3)\cong \mathbb{Z}_3\,\,}}
$$
is commutative, where 
$$
\mathrm{K}_0(\varphi;\mathbb{Z}_3)([x]_3,[y]_3)=3\cdot [x]_3+[y]_3=[y]_3,
$$
$$
\mathrm{K}_0(\varphi';\mathbb{Z}_3)([x]_3,[y]_3)=3\cdot [x]_3-[y]_3=-[y]_3.
$$

For $k=9$, the diagram
$$
\xymatrixcolsep{2pc}
\xymatrix{
{\,\,\mathrm{K}_0(A;\mathbb{Z}_9)\cong \mathbb{Z}_9\oplus \mathbb{Z}_3\,\,} \ar[r]^-{\mathrm{K}_0(\varphi;\mathbb{Z}_9)}\ar[d]_-{id}
& {\,\,\mathrm{K}_0(B;\mathbb{Z}_9)\cong \mathbb{Z}_9\,\,} \ar[d]_-{ -id}
 \\
{\,\,\mathrm{K}_0(A;\mathbb{Z}_9)\cong \mathbb{Z}_9\oplus \mathbb{Z}_3 \,\,}\ar[r]_-{\mathrm{K}_0(\varphi';\mathbb{Z}_9)}
& {\,\,\mathrm{K}_0(B;\mathbb{Z}_9)\cong \mathbb{Z}_9\,\,},}
$$
is not commutative, where
$$
\mathrm{K}_0(\varphi;\mathbb{Z}_9)([x]_9,[y]_3)=3\cdot [x]_9+y\cdot[3]_9,$$
$$
\mathrm{K}_0(\varphi';\mathbb{Z}_9)([x]_9,[y]_3)=3\cdot [x]_9-y\cdot[3]_9,\quad y=0,1,2.
$$
In particular, take $([1]_9,[1]_3)\in \mathbb{Z}_9\oplus \mathbb{Z}_3$, we have
$$
-1\cdot\mathrm{K}_0(\varphi;\mathbb{Z}_9) ([1]_9,[1]_3)=
-[6]_9\neq [0]_9=\mathrm{K}_0(\varphi';\mathbb{Z}_9)\circ id ([1]_9,[1]_3).
$$

Moreover, any automorphism $\theta$ of $\mathrm{K}_0(B;\mathbb{Z}_9)(\cong
\mathbb{Z}_9)$ {\bf doesn't conjugate} $\mathrm{K}_0(\varphi;\mathbb{Z}_9)$
to $\mathrm{K}_0(\varphi';\mathbb{Z}_9)$, i.e.,
the diagram
$$
\xymatrixcolsep{2pc}
\xymatrix{
{\,\,\mathrm{K}_0(A;\mathbb{Z}_9)\,\,} \ar[r]^-{\mathrm{K}_0(\varphi;\mathbb{Z}_9)}\ar[d]_-{id}
& {\,\,\mathrm{K}_0(B;\mathbb{Z}_9)\,\,} \ar[d]_-{\theta}
 \\
{\,\,\mathrm{K}_0(A;\mathbb{Z}_9) \,\,}\ar[r]_-{\mathrm{K}_0(\varphi';\mathbb{Z}_9)}
& {\,\,\mathrm{K}_0(B;\mathbb{Z}_9)\,\,}}
$$
is not commutative, as $$\theta\circ\mathrm{K}_0(\varphi;\mathbb{Z}_9) ([1]_9,[1]_3)=\theta([6]_9)\neq [0]_9=\mathrm{K}_0(\varphi';\mathbb{Z}_9)\circ id ([1]_9,[1]_3).$$

One can check that the map induced by the conjugation ($-id_{\mathbb{Z}_9}$ on $\mathrm{K}_0(B;\mathbb{Z}_9)$) of Dadarlat-Eilers in \cite[proof of Theorem 2.3]{DEb}
will take the non-positive element
$$(1,(3,0,3,0,\cdots))\oplus(([1]_9,[1]_3),(-[6]_9,-[6]_9,-[6]_9,\cdots))\in\mathrm{K}_0(D)\oplus\mathrm{K}_0(D;\mathbb{Z}_9)$$
into the positive one
$$(1,(3,0,3,0,\cdots))\oplus(([1]_9,[1]_3),(-[6]_9,[0]_9,-[6]_9,[0]_9,\cdots))\in\mathrm{K}_0(D')\oplus\mathrm{K}_0(D';\mathbb{Z}_9).$$

In conclusion, 
the conjugation induces an ordered isomorphism
$$
\mathbf{K}_{\langle\rho\rangle}(D;3)\cong \mathbf{K}_{\langle\rho\rangle}(D';3),
$$
but it doesn't induce an ordered graded isomorphism such that
$$
\underline{\mathrm{K}}_{\langle\rho\rangle}(D)\cong \underline{\mathrm{K}}_{\langle\rho\rangle}(D').
$$
We will carefully adjust the construction to achieve the expected purpose.

\end{notion}
\section{The map $\rho$ is necessary for total K-theory}
In this section, we denote $A,B$ to be the stable algebras of original $A,B$ in \ref{def AB}.
That is, we take new $A,B$ as $A\otimes\mathcal{K},B\otimes\mathcal{K}$, respectively.

\subsection{A modified example}
\begin{notion}\rm  \label{def F1F2}
 Recall that for $k\geq 1$, we may identify
$$
{\mathrm{K}}_0(A;\mathbb{Z}_k)=
\begin{cases}
\mathbb{Z}_{l_k}\oplus \mathbb{Z}_3, & \mbox{if } 3\mid k \\
  \mathbb{Z}_{l_k}, & \mbox{if } 3\nmid k
\end{cases}.
$$
and
$$
{\mathrm{K}}_1(A;\mathbb{Z}_k)=
\begin{cases}
\mathbb{Z}_3, & \mbox{if } 3\mid k \\
  0, & \mbox{if } 3\nmid k
\end{cases}.
$$
Case 1: if $3\mid k$,
$$
\mathbf{K}(A; k): \,\mathbb{Z}[ \frac{1}{4}]\xrightarrow{(\rho_A)_{k}^{0}}
\mathbb{Z}_{l_k}\oplus \mathbb{Z}_3
\xrightarrow{(\beta_A)_{k}^{0}} \mathbb{Z}_3,
$$$$
\mathbf{K}(B; k):\, \mathbb{Z}[ \frac{1}{4}]
\xrightarrow{(\rho_B)_{k}^{0}}
 \mathbb{Z}_{l_k}
\xrightarrow{(\beta_B)_{k}^{0}} 0,$$
where
$$(\rho_A)_{k}^{0}\,(x) = ([x]_{l_k}, 0),\,\, (\beta_A)_{k}^{0}\,(u, v) = v, \,\, (\rho_B)_{k}^{0}\,(x) = [x]_{l_k}, \,\,
(\beta_B)_{k}^{0}\,(u) = 0.$$

Case 2: if $3\nmid k$,
$$\mathbf{K}(A; k): \,\mathbb{Z}[ \frac{1}{4}]\xrightarrow{(\rho_A)_{k}^{0}}
\mathbb{Z}_{l_k}
\xrightarrow{(\beta_A)_{k}^{0}} \mathbb{Z}_3,
$$$$
\mathbf{K}(B; k):\, \mathbb{Z}[ \frac{1}{4}]
\xrightarrow{(\rho_B)_{k}^{0}}
 \mathbb{Z}_{l_k}
\xrightarrow{(\beta_B)_{k}^{0}} 0,$$
where
$$(\rho_A)_{k}^{0}\,(x) = [x]_{l_k},\,\, (\beta_A)_{k}^{0}\,(u) = 0, \,\, (\rho_B)_{k}^{0}\,(x) = [x]_{l_k}, \,\,
(\beta_B)_{k}^{0}\,(u) = 0.$$


Now by a one-sided version of \cite[3.6]{DL2}, we may choose two collections of  *-homomorphisms $\{\omega_j\}_{j\geq 1},\{\omega_j'\}_{j\geq 1} :\, A \to B$
satisfying the following  for any {$j\geq k\geq 1$:}
$$
\mathbf{K}(\omega_j; k)=
\begin{cases}
 (l_{j!},(0 ,\iota_{j,k}) , 0), & \mbox{if } 3\mid k \\
 (l_{j!},
0  , 0), & \mbox{if } 3\nmid k
\end{cases},
$$
$$
\mathbf{K}(\omega_j'; k)=
\begin{cases}
 (l_{j!},(0 ,-\iota_{j,k}) , 0), & \mbox{if } 3\mid k \\
 (l_{j!},
0  , 0), & \mbox{if } 3\nmid k
\end{cases},
$$
where $\iota_{j,k}([x]_{3})=[x\cdot \frac{l_k}{3}]_{l_k}$.
(Note that $l_{2^i!}=l_{(2^i-1)!}$, then  we may assume $\omega_{2^i}=\omega_{2^i-1}$
and $\omega_{2^i}'=\omega_{2^i-1}'$.)

Now let $F_1, F_2$ be the C*-algebras (see \ref{construction F}),
$$F_1 = \mathcal{F}[\omega_1, \omega_1, \omega_2, \omega_2,\omega_3, \omega_3,\cdots],\quad F_2 = \mathcal{F}[\omega_1,\omega_1',\omega_2,\omega_2',\omega_3, \omega_3', \cdots].$$
(In \ref{3 and 9}, we find that the commutativity fails for $k=9$, as the two ``3'''s in
the second coordinates of $\mathbf{K}(\varphi; 9) = (3,
(3 ,\iota) , 0)$ and $ \mathbf{K}(\varphi' ; 9) = (3,
( 3 ,-\iota), 0)$, respectively, form an obstruction. So in the modification, once $k$ is fixed, for any $j\geq k$, the related commutativity considered in \ref{3 and 9} will always hold for $\omega_j$ and $\omega_j'$.)

\end{notion}


\begin{notion}\rm
From the inductive limit structures of $F_1$ and $F_2$, $F_1,\,F_2$ are stable and  one can find the following facts immediately.

(i) For $i=1,2$, $\mathrm{K}_0(F_i)$ consists of all the elements of the form
$$
(x_0,(x_1,x_2,x_3,\cdots))\in \mathbb{Z}[\frac{1}{4}] \oplus \prod_{n=1}^{\infty}\mathbb{Z}[\frac{1}{4}]
$$
with
$x_{2j-1}=x_{2j}=l_{j!}\cdot x_0$ for all large enough $j$ and
$\mathrm{K}_1(F_i)\cong \mathbb{Z}_3$.

(ii) If $3\mid k$,
$\mathrm{K}_0(F_1;\mathbb{Z}_k)$ consists of all the elements of the form
$$
(([x]_{l_k},[y]_3),(z_1,z_2,z_3,\cdots))\in \mathbb{Z}_{l_k}\oplus\mathbb{Z}_3 \oplus \prod_{n=1}^{\infty}\mathbb{Z}_{l_k}
$$
 with $z_j=y\cdot [\frac{l_k}{3}]_{l_k}$ ($y=0,1,2$) for all large enough $j$;
$\mathrm{K}_0(F_2;\mathbb{Z}_k)$ consists of all the elements of the form
$$
(([x]_{l_k},[y]_3),(z_1,z_2,z_3,\cdots))\in \mathbb{Z}_{l_k}\oplus\mathbb{Z}_3 \oplus \prod_{n=1}^{\infty}\mathbb{Z}_{l_k}
$$
 with $z_j=(-1)^{j+1}\cdot y\cdot [\frac{l_k}{3}]_{l_k}$ ($y=0,1,2$) for all large enough $j$.
We also have
$$
\mathrm{K}_1(F_1;\mathbb{Z}_k)\cong\mathrm{K}_1(F_2;\mathbb{Z}_k)\cong \mathbb{Z}_3.
$$

(iii) If $3\nmid k$,
$\mathrm{K}_0(F_i;\mathbb{Z}_k)$ ($i=1,2$) consists of all the elements of the form
$$
([x]_{l_k},(z_1,z_2,z_3,\cdots))\in \mathbb{Z}_{l_k} \oplus \prod_{n=1}^{\infty}\mathbb{Z}_{l_k}
$$
with $z_j=0$ for all large enough $j$
and
$\mathrm{K}_1(F_1;\mathbb{Z}_k)=\mathrm{K}_1(F_2;\mathbb{Z}_k)=0$.

\end{notion}


\begin{notion}\rm
For any $k\geq 1$, we confirm that following diagram is commutative for any $j\geq k$,
$$
\xymatrixcolsep{2pc}
\xymatrix{
{\,\,\mathrm{K}_0(A;\mathbb{Z}_k)\,\,} \ar[r]^-{\mathrm{K}_0(\omega_j;\mathbb{Z}_k)}\ar[d]_-{id}
& {\,\,\mathrm{K}_0(B;\mathbb{Z}_k)\,\,} \ar[d]_-{ -id}
 \\
{\,\,\mathrm{K}_0(A;\mathbb{Z}_k) \,\,}\ar[r]_-{\mathrm{K}_0(\omega_j';\mathbb{Z}_k)}
& {\,\,\mathrm{K}_0(B;\mathbb{Z}_k)\,\,},}
$$
which is $
\xymatrixcolsep{2pc}
\xymatrix{
{\,\,\mathbb{Z}_{l_k}\oplus \mathbb{Z}_3\,\,} \ar[r]^-{(0,\iota_{j,k})}\ar[d]_-{id}
& {\,\,\mathbb{Z}_{l_k}\,\,} \ar[d]_-{ -id}
 \\
{\,\,\mathbb{Z}_{l_k}\oplus \mathbb{Z}_3 \,\,}\ar[r]_-{(0,-\iota_{j,k})}
& {\,\,\mathbb{Z}_{l_k}\,\,}}
$ \,if $3\mid k$;
$
\xymatrixcolsep{2pc}
\xymatrix{
{\,\,\mathbb{Z}_{l_k}\,\,} \ar[r]^-{0}\ar[d]_-{id}
& {\,\,\mathbb{Z}_{l_k}\,\,} \ar[d]_-{ -id}
 \\
{\,\,\mathbb{Z}_{l_k} \,\,}\ar[r]_-{0}
& {\,\,\mathbb{Z}_{l_k}\,\,}}
$\,\,if $3\nmid k$.

Thus, for any $j\geq k\geq 1$,
the automorphism $-id$ on $\mathrm{K}_0(B;\mathbb{Z}_k)(\cong
\mathbb{Z}_{l_k})$  conjugates $\mathrm{K}_0(\omega_j ;\mathbb{Z}_k)$
to $\mathrm{K}_0(\omega_j';\mathbb{Z}_k)$. More concretely,
the family $G$ of quadruples of the form
$$
\xymatrixcolsep{2pc}
\xymatrix{
{\,\,\mathbf{K}_{\langle\rho\rangle}(B;\mathbb{Z}_k):\,\,}
& {\,\,0\,\,} \ar[r]^-{(\beta_B)_{k}^{1}}\ar[d]_-{id}
&{\,\,\mathrm{K}_0(B)\,\,}   \ar[d]^-{id}
& {\,\,\mathrm{K}_0(B;\mathbb{Z}_k)\,\,} \ar[d]^-{-id}\ar[r]^-{(\beta_B)_{k}^{1}}
& {\,\,0\,\,}\ar[d]_-{id}
 \\
{\,\,\mathbf{K}_{\langle\rho\rangle}(B;\mathbb{Z}_k): \,\,}
& {\,\,0\,\,} \ar[r]_-{(\beta_B)_{k}^{1}}
& {\,\,\mathrm{K}_0(B)\,\,}
& {\,\,\mathrm{K}_0(B;\mathbb{Z}_k)\,\,} \ar[r]_-{(\beta_B)_{k}^{1}}
& {\,\,0\,\,}.}
$$
consists of automorphisms of $\mathrm{K}_*(B; \mathbb{Z}\oplus\mathbb{Z}_k)$, which conjugate $\mathrm{K}_*(\omega_j;\mathbb{Z}\oplus\mathbb{Z}_k)$ to $\mathrm{K}_*(\omega_j';\mathbb{Z}\oplus\mathbb{Z}_k)$ for all $j\geq k$, are order preserving and respect the $\beta$ and $\kappa$ maps. (The following diagrams
$$
\xymatrixcolsep{2pc}
\xymatrix{
{\,\,\mathrm{K}_0(B;\mathbb{Z}_k)\,\,} \ar[r]^-{(\kappa_B)_{{kl,k}}^0}\ar[d]_-{-id}
&{\,\,\mathrm{K}_0(B;\mathbb{Z}_{kl})\,\,}   \ar[d]^-{-id}
& {\,\,\mathrm{K}_0(B;\mathbb{Z}_{kl})\,\,} \ar[d]^-{-id}\ar[r]^-{(\kappa_B)_{k,kl}^0}
& {\,\,\mathrm{K}_0(B;\mathbb{Z}_{k})\,\,}\ar[d]_-{-id}
 \\
{\,\,\mathrm{K}_0(B;\mathbb{Z}_{k})\,\,} \ar[r]_-{(\kappa_B)_{{kl,k}}^0}
& {\,\,\mathrm{K}_0(B;\mathbb{Z}_{kl})\,\,},
&{\,\,\mathrm{K}_0(B;\mathbb{Z}_{kl})\,\,} \ar[r]_-{(\kappa_B)_{{k,kl}}^0}
& {\,\,\mathrm{K}_0(B;\mathbb{Z}_{k})\,\,}}
$$
are commutative for all $k,l$.)
They induce by continuity of $\mathbf{K}_{\langle\rho\rangle}(-;\mathbb{Z}_k)$ an isomorphism
$$\mathbf{K}_{\langle\rho\rangle}(F_1;\mathbb{Z}_k)
\cong \mathbf{K}_{\langle\rho\rangle}(F_2;\mathbb{Z}_k),$$ since
$$
\xymatrixcolsep{3pc}
\xymatrix{
{\,\,\mathrm{K}_*(A\oplus \bigoplus\limits_{2j}B;\mathbb{Z}\oplus \mathbb{Z}_k)\,\,}
\ar[r]^-{(\chi_{2j+1}\circ\chi_{2j})_*}\ar[d]_-{id\oplus (id\oplus G)^j}
& {\,\,\mathrm{K}_*(A\oplus \bigoplus\limits_{2j+2}B;\mathbb{Z}\oplus \mathbb{Z}_k)\,\,}
\ar[d]^-{id\oplus (id\oplus G)^{j+1}}
 \\
{\,\,\mathrm{K}_*(A\oplus \bigoplus\limits_{2j}B;\mathbb{Z}\oplus \mathbb{Z}_k) \,\,}
\ar[r]_-{(\chi_{2j+1}'\circ\chi_{2j}')_*}
& {\,\,\mathrm{K}_*(A\oplus \bigoplus\limits_{2j+2}B;\mathbb{Z}\oplus \mathbb{Z}_k)\,\,}}
$$
commutes for all $j\geq k$, where the $\chi_{j}$ and $\chi_{j}'$ maps are bonding maps in the inductive limit description
of $F_1$ and $F_2$ as in \ref{construction F}.

In general, we obtained a  graded ordered group isomorphism $$\gamma:\, \underline{\mathrm{K}}(F_1)\to\underline{\mathrm{K}}(F_2)$$
whose restriction on $\mathrm{K}_*(F_1)$ is
$$\gamma_*=id:\,\mathrm{K}_*(F_1)\to \mathrm{K}_*(F_2);$$
if $3\mid k,$
$$
(([x]_{l_k},[y]_3),(z_1,z_2,z_3,z_4,\cdots))
\stackrel{ \gamma_k^0 }{\mapsto}
(([x]_{l_k},[y]_3),(z_1,-z_2,z_3,-z_4,\cdots))
$$
and
$\gamma_k^1=id_{\mathbb{Z}_3};$
if $3\nmid k,$
$$
([x]_{l_k},(z_1,z_2,z_3,z_4,\cdots))
\stackrel{ \gamma_k^0 }{\mapsto}
([x]_{l_k},(z_1,-z_2,z_3,-z_4,\cdots))$$
and $\gamma_k^1=0.$

Then $\gamma$  commutes with $\beta$ and $\kappa$ but not $\rho$.
(Note that for any $k\geq 3$ with $k$ odd,
$$
\xymatrixcolsep{2pc}
\xymatrix{
{\,\,\mathrm{K}_0(B)\,\,} \ar[r]^-{(\rho_B)_k^{0}}\ar[d]_-{id}
& {\,\,\mathrm{K}_0(B;\mathbb{Z}_k)\,\,} \ar[d]_-{ -id}
 \\
{\,\,\mathrm{K}_0(B) \,\,}\ar[r]_-{(\rho_B)_k^{0}}
& {\,\,\mathrm{K}_0(B;\mathbb{Z}_k)\,\,}.}
$$
is not a commutative diagram.)
\end{notion}

\subsection{The necessity of $\rho$ }

\begin{theorem}\label{thm mod DE}
There exist an ordered $\underline{\mathrm{K}}_{\langle\rho\rangle}$-isomorphism 
$$(\underline{\mathrm{K}}(F_1),\underline{\mathrm{K}}
(F_1)_+))_{\underline{\mathrm{K}}_{\langle\rho\rangle}}
\cong (\underline{\mathrm{K}}(F_2),\underline{\mathrm{K}}(F_2)_+)_{
\underline{\mathrm{K}}_{\langle\rho\rangle}}.
$$
while $F_1\ncong F_2$,  and hence,
$$(\underline{\mathrm{K}}(F_1),{\underline{\mathrm{K}}}(F_1)_+)_{\Lambda}\ncong (\underline{\mathrm{K}}(F_2),\underline{\mathrm{K}}(F_2)_+)_{\Lambda}.$$
\end{theorem}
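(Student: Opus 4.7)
The theorem bundles three claims; the plan is to assemble the ordered $\underline{\mathrm{K}}_{\langle\rho\rangle}$-isomorphism $\gamma$ from the preceding subsection, establish $F_1 \ncong F_2$ via a $\rho$-rigidity argument at the level of ordered $\Lambda$-modules, and extract the ``hence'' as an immediate consequence.

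The map $\gamma$ has already been defined explicitly: it is the identity on $\mathrm{K}_*$ and on the leading $\mathbb{Z}_{l_k}\oplus \mathbb{Z}_3$ summand of $\mathrm{K}_0(F_i;\mathbb{Z}_k)$, while on the tail it acts as $(z_1,z_2,z_3,z_4,\ldots)\mapsto (z_1,-z_2,z_3,-z_4,\ldots)$. Well-definedness is immediate from the descriptions of $\mathrm{K}_0(F_1;\mathbb{Z}_k)$ and $\mathrm{K}_0(F_2;\mathbb{Z}_k)$: the $F_1$-pattern $z_j=y\cdot[\frac{l_k}{3}]_{l_k}$ eventually matches, under sign alternation, the $F_2$-pattern $z_j=(-1)^{j+1}y\cdot[\frac{l_k}{3}]_{l_k}$ eventually. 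Compatibility with $\beta$ and $\kappa$ reduces to the commutativity of $-\mathrm{id}$ on each $\mathbb{Z}_{l_k}$ with $(\beta_B)_k^j$ and $(\kappa_B)_{m,n}^j$, which was verified in the construction. Order preservation is a consequence of $\gamma|_{\mathrm{K}_*}=\mathrm{id}$: for any projection-generated ideal $I_e\subset F_1\otimes\mathcal{K}$, the corresponding ideal on the $F_2$ side is indexed identically (since $\mathrm{K}_0$ agrees), and a coordinate-wise sign flip is a group automorphism, so $\gamma(\underline{\mathrm{K}}(I_e\,|\,F_1))=\underline{\mathrm{K}}(I_e\,|\,F_2)$.

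To show $F_1\ncong F_2$, I would argue at the level of ordered $\Lambda$-modules by contradiction and then invoke functoriality of $\underline{\mathrm{K}}$. A hypothetical ordered $\Lambda$-isomorphism $\lambda$ would, in particular, commute with $\rho_k^0$ at each $k$. By \ref{3 and 9}, the obstruction to $\rho$-compatibility for the Dadarlat--Eilers construction is already visible at $k=9$; the modification using $\omega_j$ at index $l_{j!}$ is designed so that for every $k$ with $3\mid k$ and $k\neq 3$, the analogous obstruction persists for all sufficiently large $j$. Concretely, $\lambda$ would have to conjugate $\mathrm{K}_0(\omega_j;\mathbb{Z}_k)$ to $\mathrm{K}_0(\omega_j';\mathbb{Z}_k)$ for all large $j$, and the explicit computations of \ref{3 and 9} force this conjugation to be $-\mathrm{id}$ on the $\mathbb{Z}_{l_k}$-component, which is incompatible with $\rho_k^0$. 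This contradiction rules out $\lambda$; the ``hence'' clause of the theorem follows since $F_1\cong F_2$ would induce such a $\lambda$ by functoriality. The main obstacle is verifying that the rigidity of $\lambda$ is preserved through all choices of basis and all inductive-limit stages---this is precisely what the $l_{j!}$-indexing in the construction of $\omega_j$ is designed to secure.
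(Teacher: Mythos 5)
Your construction and verification of the ordered $\underline{\mathrm{K}}_{\langle\rho\rangle}$-isomorphism $\gamma$ matches the paper. The problem is the non-isomorphism half. You propose to refute a hypothetical ordered $\Lambda$-isomorphism $\lambda\colon\underline{\mathrm{K}}(F_1)\to\underline{\mathrm{K}}(F_2)$ directly and then get $F_1\ncong F_2$ by functoriality; the paper does the reverse, refuting a hypothetical $*$-isomorphism $\Xi\colon F_2\to F_1$ directly and then deducing the $\Lambda$-module non-isomorphism from the Dadarlat--Gong classification (\cite[Theorem 9.1--9.3]{DG}). Your direction makes the hard step strictly harder, because an abstract $\lambda$ carries less structure than $\Xi$, and the step where you need that structure is exactly the one you assert without proof: that $\lambda$ ``would have to conjugate $\mathrm{K}_0(\omega_j;\mathbb{Z}_k)$ to $\mathrm{K}_0(\omega_j';\mathbb{Z}_k)$ for all large $j$.'' The maps $\omega_j,\omega_j'$ are bonding data of the continuous-field construction; they are not visible in $\underline{\mathrm{K}}(F_i)$ until one has shown that the isomorphism respects the decomposition into the quotient $A$ and the fibre copies of $B$. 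In the paper this reduction occupies Steps 1 and 2 of the proof: the $*$-isomorphism $\Xi$ induces a homeomorphism of $\mathrm{Prim}\cong\alpha\mathbb{N}_+$, hence a permutation $\sigma$ and compatible maps $\Xi_1$, $\Xi_1^m$ on the quotients $A$ and $B$; then a $\mathrm{K}_0$-computation using the exact sequence $(*)$, the form $(2^n,(x_m))$ of $\mathrm{K}_0(\Xi\circ\iota_2)(1)$, and the oddness of the multiplicities $l_{j!}$ pins down $\omega_{\lceil\sigma(2j)/2\rceil}=\omega_j$ for all large $j$. None of this is available to you for free from an abstract $\lambda$; you would at minimum have to recover the ideal lattice from the Dadarlat--Gong order and redo the asymptotic matching, which your sketch does not attempt.

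The second gap is in how the contradiction is actually extracted. It is not enough to say the conjugating automorphism of $\mathbb{Z}_{l_k}$ ``must be $-\mathrm{id}$, which is incompatible with $\rho_k^0$.'' The paper's Step 3 has to control the map induced on the quotient $A$ as well: it uses the $\rho$- and $\beta$-compatibility of $\mathrm{K}_0(\Xi_1;\mathbb{Z}_3)$ together with $\mathrm{K}_0(\Xi_1)(1)=2^n$ to compute $\mathrm{K}_0(\Xi_1;\mathbb{Z}_3)$ on the generators $([1]_3,[0]_3)$ and $([0]_3,[1]_3)$, splits into four cases according to $\mathrm{K}_1(\Xi_1)=\pm\mathrm{id}$ and the parity of $n$, and then runs the generator $([0]_3,[1]_3)$ around the ideal--quotient diagram (using the exactness $(**)$) to obtain the identity $(*\!*\!*)$, which contradicts $\mathrm{K}_0(\Xi_1^{2j};\mathbb{Z}_3)=\mathrm{id}_{\mathbb{Z}_3}$ forced by $\rho$-compatibility on the fibre. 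Your proposal compresses all of this into one sentence and, in particular, never constrains the quotient-side map, without which the fibre-side conjugation is not determined. So the skeleton is reasonable but the two essential reductions --- global isomorphism to fibrewise data, and fibrewise data to an arithmetic contradiction in $\mathbb{Z}_3$ --- are both missing.
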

\begin{proof}
We have obtain
$\gamma:\,\underline{\mathrm{K}}_{\langle\rho\rangle}(F_1)\cong \underline{\mathrm{K}}_{\langle\rho\rangle}(F_2)$
from the above argument.
Denote $\mathbb{N}_+=\mathbb{N}\backslash\{0\}$. As both $A,B$ are simple,  then both ${\rm Prim}(F_1)$, ${\rm Prim}(F_2)$
are homeomorphic to $\alpha\mathbb{N_+}$ and
there are split extensions
\vspace{-0.2cm}
\begin{center}
\begin{tikzpicture}
\node (a) at (0,0) {$0$};
\node (b) at (1.5,0) {$\bigoplus_{\mathbb{N}_+}B$};
\node (c) at (3,0) {$F_i$};
\node (d) at (4.5,0) {$A$};
\node (e) at (6.5,0) {$0,\quad i=1,2,$};
\path[->,font=\scriptsize]
(a) edge (b)
(b) edge (c)
([yshift= 1pt]c.east) edge node[above] {$\pi_i$} ([yshift= 1pt]d.west)
([yshift= -1pt]d.west) edge node[below] {$\iota_i$} ([yshift= -1pt]c.east)
(d) edge (e);
\end{tikzpicture}
\end{center}
\vspace{-0.5cm}
where
$$\pi_i(a,\prod_{\mathbb{N}_+}b_m)=a,\quad (a,\prod_{\mathbb{N}_+}b_m)\in F_i,\quad  i=1,2,$$
$$\iota_1(a)=(a,\omega_1(a),\omega_1(a),\omega_2(a),\omega_2(a),\cdots)$$
and  $$\iota_2(a)=(a,\omega_1(a),\omega_1'(a),\omega_2(a),\omega_2'(a),\cdots).$$

{\bf Step 1:} We begin by
 assuming that $\Xi:\,F_2\cong F_1$. Then we may obtain an induced homeomorphism   $g:{\rm Prim}(F_2)\cong{\rm Prim}(F_1)$.
Since $\infty_{\mathbb{N}_+}$ is the only non-isolated point of $\alpha\mathbb{N}_+$, $g(\infty_{\mathbb{N}_+}) = \infty_{\mathbb{N}_+}$, and so $g$ restricts
to a permutation $\sigma$ of $\mathbb{N}_+$. More concretely, we get the following commutative diagram
$$
\xymatrixcolsep{2pc}
\xymatrix{
{\,\,0\,\,} \ar[r]^-{}
& {\,\,\bigoplus_{\mathbb{N}_+}B\,\,} \ar[r]_-{}\ar[d]_-{\Xi_0}
&{\,\,F_2\,\,}  \ar[r]^-{\pi_2} \ar[d]^-{\Xi}
& {\,\,A\,\,} \ar[r]^-{}\ar[d]_-{\Xi_1}
& {\,\,0\,\,}
 \\
{\,\,0 \,\,}\ar[r]_-{}
& {\,\,\bigoplus_{\mathbb{N}_+}B\,\,} \ar[r]_-{}
& {\,\,F_1\,\,} \ar[r]_-{\pi_1}
& {\,\,A \,\,} \ar[r]_-{}
& {\,\,0\,\,},}
$$
where $\Xi_0=\mathop{\prod}\limits_{m\geq 1}\xi_m$ ($\xi_m:\, B^{m^{\rm th}}\to B^{\sigma(m)^{\rm th}}$ is the obvious automorphism of $B$) is the restriction of $\Xi$ and $\Xi_1$ is the induced map of $\Xi$.

Moreover, for each $m\geq 1$,
denote
$J_m$ and $J_{m}'$ to be the standard ideals of $F_1,F_2$ vanishing at the ${m^{\rm th}}$ $B$ copies, respectively.
Then there are split extensions
\vspace{-0.5cm}
\begin{center}
\begin{tikzpicture}
\node (a) at (0,0) {$0$};
\node (b) at (1.5,0) {$J_m$};
\node (c) at (3,0) {$F_1$};
\node (d) at (4.5,0) {$B$};
\node (e) at (6,0) {$0$};
\path[->,font=\scriptsize]
(a) edge (b)
(b) edge (c)
([yshift= 1pt]c.east) edge node[above] {$\it{\zeta_m}$} ([yshift= 1pt]d.west)
([yshift= -1pt]d.west) edge node[below] {$\it{\epsilon_m}$} ([yshift= -1pt]c.east)
(d) edge (e);
\end{tikzpicture}
\end{center}
\vspace{-0.5cm}
and
\vspace{-0.5cm}
\begin{center}
\begin{tikzpicture}
\node (a) at (0,0) {$0$};
\node (b) at (1.5,0) {$J_m'$};
\node (c) at (3,0) {$F_2$};
\node (d) at (4.5,0) {$B$};
\node (e) at (6,0) {$0$,};
\path[->,font=\scriptsize]
(a) edge (b)
(b) edge (c)
([yshift= 1pt]c.east) edge node[above] {$\it{\zeta_m'}$} ([yshift= 1pt]d.west)
([yshift= -1pt]d.west) edge node[below] {$\it{\epsilon_m'}$} ([yshift= -1pt]c.east)
(d) edge (e);
\end{tikzpicture}
\end{center}
\vspace{-0.5cm}
where $\zeta_m,\zeta_m'$ are the quotient maps and $\epsilon_m,\epsilon_m'$ are inclusions that
$$
\epsilon_m(b_m)=\epsilon_m'(b_m)=(0,(\underbrace{0,0,\cdots,0}_{m-1},b_m,0,\cdots))\in F_1\cap F_2,\quad
\forall\, b_m\in B.
$$
Then we have the commutative diagram
$$
\xymatrixcolsep{2pc}
\xymatrix{
{\,\,0\,\,} \ar[r]^-{}
& {\,\,J_m'\,\,} \ar[r]_-{}\ar[d]_-{\Xi_0^m}
&{\,\,F_2\,\,}  \ar[r]^-{\zeta_m'} \ar[d]_-{\Xi}
& {\,\,B\,\,} \ar[r]^-{}\ar[d]_-{\Xi_1^m}
& {\,\,0\,\,}
 \\
{\,\,0 \,\,}\ar[r]_-{}
& {\,\,J_{\sigma(m)}\,\,} \ar[r]_-{}
& {\,\,F_1\,\,} \ar[r]_-{\zeta_{\sigma(m)}}
& {\,\,B \,\,} \ar[r]_-{}
& {\,\,0\,\,},}
$$
where $\Xi_0^m$ is the restriction map and $\Xi_1^m$ is the induced map of $\Xi$.

For each $m\geq 1$, we also have the following exact sequences:

$(*)$ $0\to {\mathrm{K}_0(J_m')} \to {\mathrm{K}_0(F_2)}\to {\mathrm{K}_0(B)}\to 0;$

$(**)$ $0\to{\mathrm{K}_0(J_m; \mathbb{Z}_3)}  \to {\mathrm{K}_0(F_1; \mathbb{Z}_3)}  \to {\mathrm{K}_0(B; \mathbb{Z}_3)} \to 0.$

{\bf Step 2:} For $j\geq 1$, consider the following diagram (which is not necessarily commutative):

\vspace{-0.5cm}
$$
\begin{tikzpicture}[->,>=stealth,auto,node distance=12em, thick]

\node (X) at (0,0){$A$};
\node (Y) at (4,0){$F_2$};
\node (T) at (2,2.5){$B$};
\node (Z) at (7,0){$F_1$};
\node (Q) at(9,2.5){$B$};
\node (P)at (11,0){$A,$};

\draw[->] (X.26) -- (Y.157) node[midway, above] {$\iota_2$};
\draw[->] (Y.196) -- (X.-20) node[midway, below] {$\pi_2$};
\draw[->] (X) -- (T) node[midway, above] {$\omega_j'$\,\,\,\,\,\,};
\draw[->] (Y.75) -- (T.-50) node[midway,above] {\,\,\,\,$\zeta_{2j}'$};
\draw[->] (T.-90) -- (Y.120) node[midway, below] {$\epsilon_{2j}'$\,\,\,\,};
\draw[->] (Y.23) -- (Z.157) node[midway, above] {$\Xi$};
\draw[->] (Z.196) -- (Y.-15) node[midway, below] {$\Xi^{-1}$};
\draw[->] (Z.25) -- (P.153) node[midway, above] {\,\,\,\,\,\,\,\,\,\,\,\,$\pi_1$};
\draw[->] (P.196) -- (Z.-15) node[midway, below] {\,\,\,\,\,\,\,\,\,\,\,\,$\iota_1$};
\draw[->] (Z.70) -- (Q.230) node[midway, below] {\,\,\,\,\,\,\,\,\,$\zeta_{\sigma{(2j)}}$};
\draw[->] (Q.200) -- (Z.110) node[midway, above] {$\epsilon_{\sigma{(2j)}}$\,\,\,\,\,\,\,\,\,\,};
\draw[->] (P) -- (Q) node[midway,above] {\,\,\,\,\,\,\,\,\,\,\,\,\,\,\,\,$\omega_{\lceil\frac{\sigma(2j)}{2}\rceil}$};
\end{tikzpicture}
\vspace{-0.3cm}
$$
where $\lceil\frac{\sigma(2j)}{2}\rceil=\inf\{r\in \mathbb{N}\mid r\geq \frac{\sigma(2j)}{2}\}.$

We have
$$
\omega_j'=\zeta_{2j}'\circ\iota_2,\,\,\,\,
\omega_{\lceil\frac{\sigma(2j)}{2}\rceil}=\zeta_{\sigma(2j)}\circ\iota_1,
\,\,\,\,
\zeta_{2j}'\circ\epsilon_{2j}'=
\zeta_{\sigma(2j)}\circ\epsilon_{\sigma(2j)}=id_B\in{\rm Aut}(B)$$
$$
\pi_1\circ\iota_1=
\pi_2\circ\iota_2=id_A\in{\rm Aut}(A),
\,\,\,\,
\Xi_1=\pi_1\circ\Xi\circ\iota_{2},
\,\,\,\,
\Xi_1^{2j}=\zeta_{\sigma(2j)}\circ\Xi\circ\epsilon_{2j}'.
$$

For $1\in \mathrm{K}_0(A)\cong \mathbb{Z}[\frac{1}{4}]$,
$$\mathrm{K}_0(\omega_j')=\mathrm{K}_0(\zeta_{2j}')\circ\mathrm{K}_0(\iota_2):\mathrm{K}_0(A)\to \mathrm{K}_0(B).$$
Multiply $\mathrm{K}_0(\epsilon_{2j}')$ on the left side and by $(*)$ in {\bf Step 1},
we have
$$\mathrm{K}_0(\epsilon_{2j}'\circ\omega_j')(1)-
\mathrm{K}_0(\iota_2)(1)\in \mathrm{K}_0(J_{2j}')\subset\mathrm{K}_0(F_2).
$$
Thus,
$$\mathrm{K}_0(\Xi\circ\epsilon_{2j}'\circ\omega_j')(1)-
\mathrm{K}_0(\Xi\circ\iota_2)(1)\in \mathrm{K}_0(J_{\sigma(2j)})\subset\mathrm{K}_0(F_1) ,
$$
which means,
$$\mathrm{K}_0(\zeta_{\sigma(2j)}\circ\Xi\circ\epsilon_{2j}'\circ\omega_j')(1)=
\mathrm{K}_0(\zeta_{\sigma(2j)}\circ\Xi\circ\iota_2)(1)\in \mathrm{K}_0(B).
$$
Note that
$\mathrm{K}_0(\Xi_1)=\mathrm{K}_0(\pi_1\circ\Xi\circ\iota_2)\in {\rm Aut}(\mathrm{K}_0(A))$,
there exists an $n\in \mathbb{Z}$ such that
$
\mathrm{K}_0(\Xi_1)(1)=2^n.$
Then $\mathrm{K}_0(\Xi\circ\iota_2)(1)$ has the form
$$(2^n,(x_m)_{m\geq1})\in \mathrm{K}_0(F_1)\subset \mathrm{K}_0(A)\oplus \prod_{m=1}^\infty\mathrm{K}_0(B),$$
where $x_{2j-1}=x_{2j}=2^n\cdot l_{j!}$ for all large enough $j$. As $\mathrm{K}_0(\iota_1)(2^n)=(2^n,(y_m)_{m\geq1})\in \mathrm{K}_0(F_1)$,
where $y_{2j-1}=y_{2j}=2^n\cdot l_{j!}$ for all $j\geq 1$. Then there exists a large enough $j_0\geq 3$ such that for all $j\geq j_0$, we have
$$
\mathrm{K}_0(\zeta_{\sigma(2j)}\circ\Xi\circ\epsilon_{2j}')\circ\mathrm{K}_0(\omega_j')=
\mathrm{K}_0(\zeta_{\sigma(2j)}\circ \iota_1)\circ\mathrm{K}_0(\pi_1\circ\Xi\circ\iota_2),
$$
and hence, the following diagram is commutative.
$$
\xymatrixcolsep{3pc}
\xymatrix{
{\mathrm{K}_0(A)} \cong {\mathbb{Z}[\frac{1}{4}]} \ar[r]^-{\mathrm{K}_0(\Xi_1)} \ar[d]^-{\mathrm{K}_0(\omega_j')=l_{j!}}
& {\mathrm{K}_0(A)} \cong {\mathbb{Z}[\frac{1}{4}]} \ar[d]^-{\mathrm{K}_0(\omega_{\lceil\frac{\sigma(2j)}{2}\rceil})}
 \\
{\mathrm{K}_0(B)}\cong {\mathbb{Z}[\frac{1}{4}]} \ar[r]_-{\mathrm{K}_0(\Xi_1^{2j})}
& {\mathrm{K}_0(B)}\cong {\mathbb{Z}[\frac{1}{4}]}.
}
$$

Note that $\mathrm{K}_0(\Xi_1^{2j})\in {\rm Aut} ({\mathbb{Z}[\frac{1}{4}]})$
and $2\nmid l_{j!}$, $2\nmid l_{\lceil\frac{\sigma(2j)}{2}\rceil!}$, immediately,
we obtain
$$\mathrm{K}_0(\Xi_1^{2j})(1)=2^n\quad {\rm and}\quad \mathrm{K}_0(\omega_{\lceil\frac{\sigma(2j)}{2}\rceil})=l_{j!}.$$
That is,
 $\omega_{\lceil\frac{\sigma(2j)}{2}\rceil}=\omega_{j}$ for all $j\geq j_0$.
(We point out that $\sigma(2j)$ is not necessarily $2j-1$ or $2j$---for example, $\sigma(2^{i+1})$ can even be $2^{i+1}-3$ or $2^{i+1}-2$ for some large enough integer $i$, as we  have $\omega_{2^{i}}=\omega_{2^{i}-1}$.)

Similarly, for all large enough $j$, we have the following diagram
$$
\xymatrixcolsep{3pc}
\xymatrix{
{\mathrm{K}_0(A)}\cong {\mathbb{Z}[\frac{1}{4}]}  \ar[r]^-{\mathrm{K}_0(\Xi_1)} \ar[d]^-{\mathrm{K}_0(\omega_j)}
& {\mathrm{K}_0(A)\cong {\mathbb{Z}[\frac{1}{4}]}}  \ar[d]^-{\mathrm{K}_0(\omega_{\lceil\frac{\sigma(2j-1)}{2}\rceil})}
 \\
{\mathrm{K}_0(B)\cong {\mathbb{Z}[\frac{1}{4}]}} \ar[r]_-{\mathrm{K}_0(\Xi_1^{2j-1})}
& {\mathrm{K}_0(B)\cong {\mathbb{Z}[\frac{1}{4}]}}
}
$$
is commutative and
$\omega_{\lceil\frac{\sigma(2j-1)}{2}\rceil}=\omega_{j}$.


{\bf Step 3:} 
Consider the following commutative diagram:
\begin{displaymath}
\xymatrixcolsep{4pc}
\xymatrix{
 \mathrm{K}_0(A)  \ar[r]^-{(\rho_A)_3^0}\ar[d]^-{\mathrm{K}_0(\Xi_1)}&
 \mathrm{K}_0(A;\mathbb{Z}_3)  \ar[d]^-{\mathrm{K}_0(\Xi_1;\mathbb{Z}_3)}\ar[r]^-{(\beta_A)_3^0} & \mathrm{K}_1(A)  \ar[d]^-{\mathrm{K}_1(\Xi_1)} \\
 \mathrm{K}_0(A)  \ar[r]^-{(\rho_A)_3^0}&
 \mathrm{K}_0(A;\mathbb{Z}_3)  \ar[r]^-{(\beta_A)_3^0}& \mathrm{K}_1(A).
}
\end{displaymath}

As we have shown in {\bf Step 2} that $\mathrm{K}_0(\Xi_1)(1)=2^n$, from the above commutativity, we obtain
$$
\mathrm{K}_0(\Xi_1;\mathbb{Z}_3)([1]_3,[0]_3)=
\begin{cases}
  ([1]_3,[0]_3), & \mbox{if } n\,{\rm is \,even} \\
   ([2]_3,[0]_3), & \mbox{if } n\,{\rm is \,odd}
\end{cases}.
$$
Since $\mathrm{K}_1(\Xi_1)$ can be $id_{\mathbb{Z}_3}$ or $-id_{\mathbb{Z}_3}$, then we have the following cases:

{\bf Case 1:} $\mathrm{K}_1(\Xi_1)=id_{\mathbb{Z}_3}$  and $n$ is even.

{\bf Case 2:} $\mathrm{K}_1(\Xi_1)=id_{\mathbb{Z}_3}$ and $n$ is odd.

{\bf Case 3:} $\mathrm{K}_1(\Xi_1)=-id_{\mathbb{Z}_3}$ and $n$ is even.

{\bf Case 4:} $\mathrm{K}_1(\Xi_1)=-id_{\mathbb{Z}_3}$ and $n$ is odd.

We only need to deal with {\bf Case 1}, the other cases are very similar. Since $$\mathrm{K}_1(\Xi_1)\circ(\beta_A)_3^0=(\beta_A)_3^0\circ \mathrm{K}_0(\Xi_1;\mathbb{Z}_3),$$
then  $\mathrm{K}_0(\Xi_1;\mathbb{Z}_3)([0]_3,[1]_3)=([a]_3,[1]_3)$, for some $a\in\{0,1,2\}$.

Our goal is to check the commutativity of the following diagram
$$
\xymatrixcolsep{3pc}
\xymatrix{
{\mathrm{K}_0(A;\mathbb{Z}_3)}  \ar[r]^-{\mathrm{K}_0(\Xi_1;\mathbb{Z}_3)} \ar[d]^-{\mathrm{K}_0(\omega_j';\mathbb{Z}_3)}
& {\mathrm{K}_0(A;\mathbb{Z}_3)}  \ar[d]^-{\mathrm{K}_0(\omega_j;\mathbb{Z}_3)}
 \\
{\mathrm{K}_0(B;\mathbb{Z}_3)} \ar[r]_-{\mathrm{K}_0(\Xi_1^{2j};\mathbb{Z}_3)}
& {\mathrm{K}_0(B;\mathbb{Z}_3)}
}
$$
for all large enough integer $j$. Since $\mathrm{K}_0(A;\mathbb{Z}_3)\cong \mathbb{Z}_3\oplus \mathbb{Z}_3$, we only need to check the generators $([1]_3,[0]_3)$ and $([0]_3,[1]_3)$. The procedure is very similar to {\bf
 Step 1}.

For the generator $([0]_3,[1]_3)$, it is easily seen that for all large enough $j$,
$$\mathrm{K}_0(\omega_j;\mathbb{Z}_3)
\circ\mathrm{K}_0(\Xi_1;\mathbb{Z}_3)([1]_3,[0]_3)
=\mathrm{K}_0(\omega_j;\mathbb{Z}_3)([1]_3,[0]_3)$$$$
=[0]_3=\mathrm{K}_0(\Xi_1^{2j};\mathbb{Z}_3)([0]_3,[0]_3)
=\mathrm{K}_0(\Xi_1^{2j};\mathbb{Z}_3)\circ
\mathrm{K}_0(\omega_j';\mathbb{Z}_3)
([1]_3,[0]_3).$$

For the generator $([0]_3,[1]_3)$ and large enough $j$, consider the following diagram (which is not necessarily commutative):
$$
\begin{tikzpicture}[->,>=stealth,auto,node distance=12em, thick]

\node (X) at (0,0){$A$};
\node (Y) at (4,0){$F_2$};
\node (T) at (2,2.5){$B$};
\node (Z) at (7,0){$F_1$};
\node (Q) at(9,2.5){$B$};
\node (P)at (11,0){$A.$};

\draw[->] (X.26) -- (Y.157) node[midway, above] {$\iota_2$};
\draw[->] (Y.196) -- (X.-20) node[midway, below] {$\pi_2$};
\draw[->] (X) -- (T) node[midway, above] {$\omega_j'$\,\,\,\,\,\,};
\draw[->] (Y.75) -- (T.-50) node[midway,above] {\,\,\,\,$\zeta_{2j}'$};
\draw[->] (T.-90) -- (Y.120) node[midway, below] {$\epsilon_{2j}'$\,\,\,\,};
\draw[->] (Y.23) -- (Z.157) node[midway, above] {$\Xi$};
\draw[->] (Z.196) -- (Y.-15) node[midway, below] {$\Xi^{-1}$};
\draw[->] (Z.25) -- (P.155) node[midway, above] {\,\,\,\,\,\,\,\,\,\,\,\,$\pi_1$};
\draw[->] (P.196) -- (Z.-15) node[midway, below] {\,\,\,\,\,\,\,\,\,\,\,\,$\iota_1$};
\draw[->] (Z.70) -- (Q.230) node[midway, below] {\,\,\,\,\,\,\,\,\,$\zeta_{\sigma{(2j)}}$};
\draw[->] (Q.200) -- (Z.110) node[midway, above] {$\epsilon_{\sigma{(2j)}}$\,\,\,\,\,\,\,\,\,\,};
\draw[->] (P) -- (Q) node[midway,above] {\,\,\,\,\,\,\,\,\,\,\,\,\,\,\,\,\,\,\,\,\,\,\,\,\,\,\,\,$\omega_{\lceil\frac{\sigma(2j)}{2}\rceil}=\omega_j$};
\end{tikzpicture}$$
Recall that $$\omega_j'=\zeta_{2j}'\circ\iota_2,\,\,\,\,
\omega_{\lceil\frac{\sigma(2j)}{2}\rceil}=\zeta_{\sigma(2j)}\circ\iota_1,
\,\,\,\,
\zeta_{2j}\circ\epsilon_{2j}=
\zeta_{\sigma(2j)}'\circ\epsilon_{\sigma(2j)}'=id_B,$$
$$
\pi_1\circ\iota_1=
\pi_2\circ\iota_2=id_A,
\,\,\,\,
\Xi_1=\pi_1\circ\Xi\circ\iota_{2},
\,\,\,\,
\Xi_1^{2j}=\zeta_{\sigma(2j)}\circ\Xi\circ\epsilon_{2j}'.
$$

Then we have
$$\mathrm{K}_0(\omega_j';\mathbb{Z}_3)=
\mathrm{K}_0(\zeta_{2j}';\mathbb{Z}_3)\circ\mathrm{K}_0(\iota_2;\mathbb{Z}_3).$$
By the exactness $(**)$ in {\bf Step 1}, we have
$$\mathrm{K}_0(\epsilon_{2j}'\circ\omega_j';\mathbb{Z}_3)([0]_3,[1]_3)-
\mathrm{K}_0(\iota_2;\mathbb{Z}_3)([0]_3,[1]_3)\in \mathrm{K}_0(J_{2j}';\mathbb{Z}_3).
$$
Thus,
$$\mathrm{K}_0(\Xi\circ\epsilon_{2j}'\circ\omega_j';\mathbb{Z}_3)([0]_3,[1]_3)-
\mathrm{K}_0(\Xi\circ\iota_2;\mathbb{Z}_3)([0]_3,[1]_3)\in \mathrm{K}_0(J_{\sigma(2j)};\mathbb{Z}_3),
$$
which means,
$$\mathrm{K}_0(\zeta_{\sigma(2j)}\circ\Xi\circ\epsilon_{2j}'\circ\omega_j';\mathbb{Z}_3)([0]_3,[1]_3)=
\mathrm{K}_0(\zeta_{\sigma(2j)}\circ\Xi\circ\iota_2;\mathbb{Z}_3)([0]_3,[1]_3)\in \mathrm{K}_0(B;\mathbb{Z}_3),
$$
i.e.,
$$\mathrm{K}_0(\Xi_1^{2j}\circ\omega_j';\mathbb{Z}_3)([0]_3,[1]_3)=
\mathrm{K}_0(\zeta_{\sigma(2j)}\circ\Xi\circ\iota_2;\mathbb{Z}_3)([0]_3,[1]_3).
$$

Note that $$
\mathrm{K}_0(\iota_1;\mathbb{Z}_3)([a]_{3},[1]_3)=
(([a]_{3},[1]_3),([1]_3,[1]_3,[1]_3,\cdots))\in \mathbb{Z}_{3}\oplus\mathbb{Z}_3 \oplus \prod_{n=1}^{\infty}\mathbb{Z}_{3}.
$$
Since
we have the split sequence
\vspace{-0.2cm}
\begin{center}
\begin{tikzpicture}
\node (a) at (0,0) {$0$};
\node (b) at (1.5,0) {$\bigoplus_{\mathbb{N}_+}B$};
\node (c) at (3,0) {$F_1$};
\node (d) at (4.5,0) {$A$};
\node (e) at (6.0,0) {$0,$};
\path[->,font=\scriptsize]
(a) edge (b)
(b) edge (c)
([yshift= 1pt]c.east) edge node[above] {$\pi_1$} ([yshift= 1pt]d.west)
([yshift= -1pt]d.west) edge node[below] {$\iota_1$} ([yshift= -1pt]c.east)
(d) edge (e);
\end{tikzpicture}
\end{center}
\vspace{-0.3cm}
$\mathrm{K}_0(\Xi_1\circ \iota_2;\mathbb{Z}_3)([0]_3,[1]_3)$
has the form
$$
(([a]_{3},[1]_3),(z_1,z_2,z_3,\cdots))\in \mathbb{Z}_{3}\oplus\mathbb{Z}_3 \oplus \prod_{n=1}^{\infty}\mathbb{Z}_{3}
$$
 with $z_j= [1]_3$ for all large enough $j$.

 Then there exists a $j_1\geq j_0$ such that for all $j\geq j_1$, we have
\begin{align*}
  &\mathrm{K}_0(\Xi_1^{2j}\circ\omega_j';\mathbb{Z}_3)([0]_3,[1]_3)\nonumber\\
  & =\mathrm{K}_0(\zeta_{\sigma(2j)}\circ \iota_1;\mathbb{Z}_3)\circ\mathrm{K}_0(\pi_1\circ\Xi\circ\iota_2;\mathbb{Z}_3)([0]_3,[1]_3)\nonumber\\
   & = \mathrm{K}_0(\omega_j;\mathbb{Z}_3)\circ\mathrm{K}_0(\Xi_1;\mathbb{Z}_3)([0]_3,[1]_3) \\
   & =[1]_3. \qquad (***)
\end{align*}
Since $\Xi_1^{2j}$ is an isomorphism, via the coefficient maps, we have
the following commutative diagram
$$
\xymatrixcolsep{3pc}
\xymatrix{
{\mathrm{K}_0(B)} \ar[d]^-{\mathrm{K}_0(\Xi_1^{2j})} \ar[r]^-{\rho_3^0}
&{\mathrm{K}_0(B;\mathbb{Z}_3)} \ar[d]^-{\mathrm{K}_0(\Xi_1^{2j};\mathbb{Z}_3)}
 \\
{\mathrm{K}_0(B)} \ar[r]^-{\rho_3^0}
& {\mathrm{K}_0(B;\mathbb{Z}_3)}.
}
$$
As $n$ is even, $\rho_3^0\circ\mathrm{K}_0(\Xi_1^{2j})(1)=[1]_3$, and hence, $\mathrm{K}_0(\Xi_1^{2j};\mathbb{Z}_3)=id_{\mathbb{Z}_3}$.

Then for all  $j\geq j_1$, we have
$$
\mathrm{K}_0(\Xi_1^{2j};\mathbb{Z}_3)\circ
\mathrm{K}_0(\omega_j';\mathbb{Z}_3)
([0]_3,[1]_3)=
\mathrm{K}_0(\Xi_1^{2j};\mathbb{Z}_3)([2]_3)
=[2]_3,
$$
which contradicts with $(***)$.


For Cases 2, 3, 4, the similar arguments lead to similar contradictions.
The different part is that the Case 2 and 3 should be dealt with
the diagram
$$
\begin{tikzpicture}[->,>=stealth,auto,node distance=12em, thick]

\node (X) at (0,0){$A$};
\node (Y) at (4,0){$F_2$};
\node (T) at (2,2.5){$B$};
\node (Z) at (7,0){$F_1$};
\node (Q) at(9,2.5){$B$};
\node (P)at (11,0){$A.$};

\draw[->] (X.26) -- (Y.157) node[midway, above] {$\iota_2$};
\draw[->] (Y.196) -- (X.-20) node[midway, below] {$\pi_2$};
\draw[->] (X) -- (T) node[midway, above] {$\omega_j$\,\,\,\,\,\,};
\draw[->] (Y.75) -- (T.-50) node[midway,above] {\,\,\,\,\,\,\,$\zeta_{2j-1}'$};
\draw[->] (T.-90) -- (Y.120) node[midway, below] {$\epsilon_{2j-1}'$\,\,\,\,};
\draw[->] (Y.23) -- (Z.157) node[midway, above] {$\Xi$};
\draw[->] (Z.196) -- (Y.-15) node[midway, below] {$\Xi^{-1}$};
\draw[->] (Z.25) -- (P.155) node[midway, above] {\,\,\,\,\,\,\,\,\,\,\,\,$\pi_1$};
\draw[->] (P.196) -- (Z.-15) node[midway, below] {\,\,\,\,\,\,\,\,\,\,\,\,$\iota_1$};
\draw[->] (Z.70) -- (Q.230) node[midway, below] {\,\,\,\,\,\,\,\,\,\,\,\,\,\,\,\,\,\,\,$\zeta_{\sigma{(2j-1)}}$};
\draw[->] (Q.200) -- (Z.110) node[midway, above] {$\epsilon_{\sigma{(2j-1)}}$\,\,\,\,\,\,\,\,\,\,\,\,\,\,\,};
\draw[->] (P) -- (Q) node[midway,above] {\,\,\,\,\,\,\,\,\,\,\,\,\,\,\,\,\,\,\,\,\,\,\,\,\,\,\,\,\,\,\,\,$\omega_{\lceil\frac{\sigma(2j-1)}{2}\rceil}=\omega_j$};
\end{tikzpicture}$$

Hence, in general, the isomorphism $\Xi$ assumed in {\bf
 Step 1} doesn't exist, i.e., $F_2\ncong F_1$. By \cite[Theorem 9.1--9.3]{DG}, $$(\underline{\mathrm{K}}(F_1),\underline{\mathrm{K}}(F_1)_+)_{\Lambda}\ncong (\underline{\mathrm{K}}(F_2),\underline{\mathrm{K}}(F_2)_+)_{\Lambda}.$$

\end{proof}

\section{The map $\rho$ is necessary for extension}
In this section, we raise an example to show that  $\rho$ is necessary for extension, i.e., Theorem \ref{rho is nece} is true. The construction shares a similar process with the one described in section 2 of \cite{ALcounter}.
\begin{theorem}[Theorem 3.8 of \cite{ALcounter}; see also \cite{ERR2}]\label{yibande k0+}
Let $A,B$ be nuclear, separable ${\rm C}^*$-algebras of stable rank one and real rank zero. Suppose that $A$ is unital simple,  $B$ is stable and $({\rm K}_0(B),{\rm K}_0^+(B))$  is weakly unperforated. Let $e$ be a unital essential extension with trivial boundary maps
$$
0\to B\xrightarrow{\iota} E\xrightarrow{\pi} A\to 0.
$$
Then
$$\mathrm{K}_0^+(E)=((\mathrm{K}_0(\pi))^{-1} (\mathrm{K}_0^+(A)\backslash\{0\}))
\,\cup\,  (\mathrm{K}_0(\iota)(\mathrm{K}_0^+(B))).$$

\end{theorem}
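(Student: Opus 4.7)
The plan is to split the claimed equality into three separate inclusions. Writing $U := \pi_*^{-1}(\mathrm{K}_0^+(A)\setminus\{0\})$ and $V := \iota_*(\mathrm{K}_0^+(B))$, I will show in turn (i) $V\subseteq \mathrm{K}_0^+(E)$, (ii) $\mathrm{K}_0^+(E)\subseteq U\cup V$, and (iii) $U\subseteq \mathrm{K}_0^+(E)$. Throughout I will use that the trivial boundary hypothesis collapses the six-term sequence into the short exact sequence
$$0\longrightarrow \mathrm{K}_0(B)\xrightarrow{\iota_*} \mathrm{K}_0(E)\xrightarrow{\pi_*}\mathrm{K}_0(A)\longrightarrow 0,$$
and that the Lin--R\o rdam proposition recalled after Definition \ref{trivial boundary def} gives $rr(E)=0$ and $sr(E)=1$.

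Inclusion (i) is immediate from functoriality of $\mathrm{K}_0$. For (ii), given a projection $p_E\in E\otimes\mathcal{K}$, either $[\pi(p_E)]\neq 0$ in $\mathrm{K}_0(A)$ (in which case $[p_E]\in U$ by definition), or $[\pi(p_E)]=0$; in the latter case stable rank one of $A$ gives cancellation for projections in $A\otimes\mathcal{K}$, and since the only projection Murray--von Neumann equivalent to $0$ is $0$ itself, we obtain $\pi(p_E)=0$. Hence $p_E\in B\otimes\mathcal{K}$ and $[p_E]\in V$.

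For (iii), the substantive inclusion, take $x\in \mathrm{K}_0(E)$ with $\pi_*(x)=[p_A]$ nonzero and positive. I will lift $p_A$ to a projection $q\in E\otimes\mathcal{K}$ using $rr(E)=0$; exactness then furnishes a unique $z\in \mathrm{K}_0(B)$ with $x=[q]+\iota_*(z)$. My strategy is to realize $x$ as a single projection class by enlarging the lift. By Theorem \ref{ordertotal}(iii) applied to the stable algebra $B$, pick a projection $s\in B$ orthogonal to $q$ inside $E\otimes\mathcal{K}$ (achieved by placing $s$ in a fresh copy of $\mathcal{K}$) whose class $[s]\in \mathrm{K}_0^+(B)$ is so large that $[s]-z\in \mathrm{K}_0^+(B)$, and realize that latter class by a projection $r\in B$. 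Setting $Q:=q+s$ produces a lift of $p_A$ with $[Q]=[q]+\iota_*([s])$, and the identity $[s]-[r]=z$ in $\mathrm{K}_0(B)$ holds by construction, so $[Q]-\iota_*([r])=x$. It therefore suffices to exhibit $r$ as a genuine sub-projection of $Q$ inside $E\otimes\mathcal{K}$, for then $P:=Q-r$ is a projection with $[P]=x$, witnessing $x\in \mathrm{K}_0^+(E)$.

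The principal obstacle is precisely this sub-projection condition. Here one uses unitality of the extension (so $\tau(1_A)=1_{Q(B)}$), together with essentiality and simplicity of $A$, to ensure that the lift $Q$ is full in $E\otimes\mathcal{K}$ and hence $QBQ$ is a full hereditary sub-algebra of $B$: the correspondence between ideals of $E$ and $E$-invariant ideals of $B$ is collapsed by simplicity of $A=E/B$, while unital essentiality prevents $Q$ from annihilating a nonzero summand of $B$. Brown's stabilization theorem then identifies $QBQ\otimes\mathcal{K}$ with $B\otimes\mathcal{K}=B$, so every positive class in $\mathrm{K}_0(B)$ is realized by a projection sitting inside $QBQ$ (after possibly enlarging $s$ to provide the needed room). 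The weak unperforation of $\mathrm{K}_0(B)$, combined with $rr(B)=0$ and $sr(B)=1$, then translates this $\mathrm{K}_0$-realization into an actual Murray--von Neumann sub-projection of $Q$ living in the ideal $B$, completing the construction. The fullness step is the delicate ingredient, since without the unital hypothesis a direct-sum-type construction can place $Q$ entirely in one summand of $B$—so every hypothesis in the theorem is exploited at this point.
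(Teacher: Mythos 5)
First, a point of comparison: the paper does not actually prove this statement — it is quoted as Theorem 3.8 of \cite{ALcounter} (see also \cite{ERR2}) and used as a black box in Example \ref{rho example} — so your proposal can only be measured against the cited sources, not against an internal argument.

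Your inclusions (i) and (ii) are correct, and your reduction of the hard inclusion has the right skeleton: lift $p_A$ to a projection $q$ using the vanishing of $\delta_0$, write $x=[q]+\iota_*(z)$ with $z\in \mathrm{K}_0(B)$ unique by injectivity of $\iota_*$, and try to realize the $B$-correction by adjoining an orthogonal projection and deleting a subprojection. The fullness of $Q(B\otimes\mathcal{K})Q$ in $B\otimes\mathcal{K}$ is also correct and can indeed be derived from unitality together with simplicity of $A$ (a unital extension with unital simple quotient is automatically a full extension, and a full extension admits no nonzero ideal of $B\otimes\mathcal{K}$ annihilated by $q$ when $\pi(q)\neq 0$). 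The gap is in the very last step. Brown's theorem gives $Q(B\otimes\mathcal{K})Q\otimes\mathcal{K}\cong B\otimes\mathcal{K}$, which identifies the ordered $\mathrm{K}_0$-groups but says nothing about which positive classes are realized by projections inside the corner $Q(B\otimes\mathcal{K})Q$ itself — and that is what you need in order to form $Q-r$. Full hereditary subalgebras of stable algebras need not realize all positive classes ($\mathbb{C}\subseteq\mathcal{K}$ is full and hereditary but only realizes the classes $0$ and $1$), so fullness alone cannot carry the argument. Moreover, ``enlarging $s$'' cannot close this gap, because the class you must realize is $[s]-z$, which grows in lockstep with $[s]$: the classes realized under $s$ are exactly those $\leq[s]$, whereas $[s]-z\leq[s]$ only when $z\geq 0$, i.e., only in the case where no correction was needed in the first place.

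A cleaner reduction writes $z=z_+-z_-$ with $z_\pm\in\mathrm{K}_0^+(B)$ (Theorem \ref{ordertotal}(i)), takes $[s]=z_+$, and then requires only a projection $r\leq q$ in $B\otimes\mathcal{K}$ with $[r]=z_-$, a fixed target. But this statement — every class in $\mathrm{K}_0^+(B)$ is realized by a projection of $B\otimes\mathcal{K}$ sitting under an arbitrary projection lift $q$ of a nonzero projection of $A\otimes\mathcal{K}$ — is precisely the substantive lemma behind \cite{ERR2} and \cite[Theorem 3.8]{ALcounter}, and it does not follow from fullness plus Brown's theorem plus weak unperforation as you invoke them; your appeal to weak unperforation at that point is an assertion rather than an argument. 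In short, the proposal correctly isolates the crux and assembles the right peripheral ingredients, but the crux itself is left unproved.
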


\begin{example}\label{rho example}

Set $B_1=F_1$, $B_2=F_2$, where $F_1,F_2$ are the algebras in \ref{def F1F2}.
Denote $\mathbf{Z}$ to be the subgroup of $\mathbb{Z}[\frac{1}{4}]\oplus\prod_{{\mathbb{Z}\backslash\{0\}}} \mathbb{Z}[\frac{1}{4}]$
consisting of all elements of form $(a, \prod_{{\mathbb{Z}\backslash\{0\}}} a_j)$ and $ a_j= (|j|!)\cdot a$  for
all large enough $j$. Then we have
$$
\mathrm{K}_0(B_i)=\mathbf{Z},\,\,\,\, \mathrm{K}_1(B_i)\cong \mathbb{Z}_3,\,\,\,\,
 \mathrm{K}_0^+(B_i)=\mathbf{Z}\cap ( \mathbb{R}_+ \oplus \prod_{{\mathbb{Z}\backslash\{0\}}}\mathbb{R}_+),
$$
where $i=1,2$ and $\mathbb{R}_+=[0,+\infty)$.

Denote $\mathbf{Q}$ to be the subgroup of $\mathbb{Q}\oplus\prod_{\mathbb{Z}\backslash\{0\}}\mathbb{Q}$ consists of all the elements with form $(a, \prod_{{\mathbb{Z}\backslash\{0\}}} a_j)$ and $ a_j= (l_{|j|!})\cdot a$  for
all large enough $j$.

Consider the following exact sequence
$$0\to\mathbf{Z}\xrightarrow{\iota}\mathbf{Q}\xrightarrow{\varpi} \mathbf{Q}/\mathbf{Z}\to 0,$$
where $\iota(a, \prod_{\mathbb{Z}\backslash\{0\}}a_j)=(a, \prod_{\mathbb{Z}\backslash\{0\}}a_j)$
is the natural embedding map
and $\varpi$ is the quotient map. Then $\mathbf{Q}/\mathbf{Z}$ is a countable torsion group.
By \cite[Theorem 4.20]{EG}, let $A_\chi$ be a unital, simple, real rank zero and stable rank one AH algebra with no dimension growth satisfying
 $$\mathrm{K}_0(A_\chi)=\mathbb{Q}\oplus \mathbf{Q}/\mathbf{Z},\quad \mathrm{K}_1(A_\chi)=0,\quad [1_{A_\chi}]=[(1,0)] $$
 $$\mathrm{K}_0^+(A_\chi)=\{(x,y)\in \mathbb{Q}\oplus \mathbf{Q}/\mathbf{Z}\mid\,x>0\}\cup\{(0,0)\in \mathbb{Q}\oplus \mathbf{Q}/\mathbf{Z}\}.$$
Let $\varepsilon_i\in {\rm Ext}_{[1]}(\mathrm{K}_*(A_\chi), \mathrm{K}_*(B_i))$ be the equivalent class of
$$
0\to \mathrm{K}_0(B_i)\xrightarrow{\rho_i} (\mathbb{Q}\oplus \mathbf{Q},(1,0))\xrightarrow{\varpi_i} (\mathrm{K}_0(A_\chi),(1,0))\to 0,
$$
$$
0\to \mathrm{K}_1(B_i)\xrightarrow{\nu_i}\mathbb{Z}_3\xrightarrow{0} \mathrm{K}_1(A_\chi)\to 0,
$$
where $\rho_i(a, \prod_{\mathbb{Z}\backslash\{0\}}a_j)=(0, (a, \prod_{\mathbb{Z}\backslash\{0\}}a_j))$, $\nu_i=id_{\mathbb{Z}_3}$ and
$$\varpi_i(x, (b, \prod_{\mathbb{Z}\backslash\{0\}}b_j))=(x, \varpi(b, \prod_{\mathbb{Z}\backslash\{0\}}b_j)).$$

By Theorem \ref{strong wei}, there are two unital essential extensions of $C^*$-algebras
$$
0\to B_i\xrightarrow{\iota_i} E_i\xrightarrow{\pi_i}A_\chi\to 0, \quad i=1,2,
$$
realising $\varepsilon_i$ 
and satisfying the following commutative diagram
$$
\xymatrixcolsep{1pc}
\xymatrix{
{\,\,0\,\,} \ar[r]^-{}
& {\,\,\mathrm{K}_*(B_i)\,\,} \ar[d]_-{id} \ar[r]^-{(\rho_i,\nu_i)}
& {\,\,(\mathbb{Q}\oplus \mathbf{Q},\mathbb{Z}_3,(1,0))\,\,} \ar[d]_-{\zeta_i} \ar[r]^-{(\varpi_i,0)}
& {\,\,(\mathrm{K}_*(A_\chi), [1_{A_\chi}])\,\,} \ar[d]_-{id} \ar[r]^-{}
& {\,\,0\,\,} \\
{\,\,0\,\,} \ar[r]^-{}
& {\,\,\mathrm{K}_*(B_i)\,\,} \ar[r]^-{\mathrm{K}_*(\iota_i)}
& {\,\,(\mathrm{K}_*(E_i),[1_{E_i}]) \,\,} \ar[r]^-{\mathrm{K}_*(\pi_i)}
& {\,\,(\mathrm{K}_*(A_\chi), [1_{A_\chi}]) \,\,} \ar[r]_-{}
& {\,\,0\,\,}.}
$$
Then $E_1, E_2\in \mathcal{E}$. By definition of strongly unitary equivalence, for each $i=1,2$, $E_i$ is unique up to isomorphism.
We will identify
$\mathrm{K}_*(E_i)=(\mathbb{Q}\oplus \mathbf{Q},\mathbb{Z}_3)$ through the isomorphism $\zeta_i$.

\begin{proposition}
For each $i=1,2$ and any $n\geq1$, we have

{\rm (i)} if $3\mid n$, $
\mathrm{K}_0(E_i;\mathbb{Z}_{n})=\mathbb{Z}_{3},$ $
\mathrm{K}_1(E_i;\mathbb{Z}_{n})=\mathbb{Z}_{3};
$

{\rm (ii)} if $3\nmid n$, $\mathrm{K}_0(E_i;\mathbb{Z}_{n})=0,$ $
\mathrm{K}_1(E_i;\mathbb{Z}_{n})=0.$

\end{proposition}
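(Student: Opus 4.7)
The plan is to read off both groups with coefficients directly from the Bockstein six-term exact sequence recalled in \ref{def k-total},
$$\mathrm{K}_j(E_i)\xrightarrow{\times n}\mathrm{K}_j(E_i)\xrightarrow{\rho_n^j}\mathrm{K}_j(E_i;\mathbb{Z}_n)\xrightarrow{\beta_n^j}\mathrm{K}_{j+1}(E_i)\xrightarrow{\times n}\mathrm{K}_{j+1}(E_i),$$
using only the identifications $\mathrm{K}_0(E_i)=\mathbb{Q}\oplus\mathbf{Q}$ and $\mathrm{K}_1(E_i)=\mathbb{Z}_3$ that were fixed at the end of Example \ref{rho example}.

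First I would verify that $\mathrm{K}_0(E_i)=\mathbb{Q}\oplus\mathbf{Q}$ is both divisible and torsion-free. Divisibility of $\mathbb{Q}$ is standard; for $\mathbf{Q}$, given any $(a,(a_j))\in\mathbf{Q}$ and any $n\geq 1$, the element $(a/n,(a_j/n))$ again lies in $\mathbf{Q}$ because the defining relation $a_j=l_{|j|!}\cdot a$ for all large $|j|$ passes to $a_j/n=l_{|j|!}\cdot(a/n)$. Torsion-freeness is immediate since $\mathbf{Q}\subset\mathbb{Q}\oplus\prod_{\mathbb{Z}\backslash\{0\}}\mathbb{Q}$. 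Consequently $\times n$ is a bijection on $\mathrm{K}_0(E_i)$ for every $n\geq 1$.

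Having this, the six-term sequence collapses considerably. Surjectivity of $\times n$ on $\mathrm{K}_0(E_i)$ forces $\rho_n^0=0$, which by exactness makes $\beta_n^0$ injective, so that
$$\mathrm{K}_0(E_i;\mathbb{Z}_n)\;\cong\;\ker\bigl(\times n\colon\mathrm{K}_1(E_i)\to\mathrm{K}_1(E_i)\bigr);$$
dually, injectivity of $\times n$ on $\mathrm{K}_0(E_i)$ forces $\beta_n^1=0$, and combined with exactness at $\mathrm{K}_1(E_i;\mathbb{Z}_n)$ one obtains
$$\mathrm{K}_1(E_i;\mathbb{Z}_n)\;\cong\;\mathrm{K}_1(E_i)\big/n\mathrm{K}_1(E_i).$$
Both coefficient groups are therefore governed entirely by the action of $\times n$ on $\mathrm{K}_1(E_i)=\mathbb{Z}_3$.

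Finally I would split into the two cases. If $3\mid n$, then $\times n=0$ on $\mathbb{Z}_3$, so both the kernel and the cokernel equal $\mathbb{Z}_3$, yielding $\mathrm{K}_0(E_i;\mathbb{Z}_n)\cong\mathbb{Z}_3\cong\mathrm{K}_1(E_i;\mathbb{Z}_n)$, which is (i). If $3\nmid n$, then $\gcd(n,3)=1$ makes $\times n$ an automorphism of $\mathbb{Z}_3$, so both the kernel and the cokernel vanish and $\mathrm{K}_j(E_i;\mathbb{Z}_n)=0$ for $j=0,1$, which is (ii). There is essentially no obstacle: the only non-automatic input is the divisibility of $\mathbf{Q}$, which is immediate from its defining condition, and the argument applies uniformly to both $i=1$ and $i=2$ since the ambient K-groups have been identified through the isomorphisms $\zeta_i$.
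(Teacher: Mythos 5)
Your proposal is correct and follows essentially the same route as the paper: both read off $\mathrm{K}_*(E_i;\mathbb{Z}_n)$ from the Bockstein six-term exact sequence using that $\times n$ is an isomorphism on $\mathrm{K}_0(E_i)=\mathbb{Q}\oplus\mathbf{Q}$ and is either zero or an isomorphism on $\mathrm{K}_1(E_i)=\mathbb{Z}_3$ according to whether $3\mid n$. Your explicit check that $\mathbf{Q}$ is divisible and torsion-free is a detail the paper leaves implicit, but the argument is the same.
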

\begin{proof}
For each $i=1,2$, we have
$$
\mathrm{K}_0(E_i)=\mathbb{Q}\oplus \mathbf{Q}\quad{\rm and}\quad
\mathrm{K}_1(E_i)=\mathbb{Z}_3.$$
For each $n\geq1$,
via  the cofibre sequence, we have the following exact sequence
$$
\xymatrixcolsep{3pc}
\xymatrix{
{\mathrm{K}_0(E_i)}  \ar[r]^-{\times n}
& {\mathrm{K}_0(E_i)}  \ar[r]^-{}
& {\mathrm{K}_0(E_i; \mathbb{Z}_n)} \ar[d]_-{}
 \\
{\mathrm{K}_1(E_i; \mathbb{Z}_n)} \ar[u]_-{}
& {\mathrm{K}_1(E_i)} \ar[l]_-{}
& {\mathrm{K}_1(E_i)} \ar[l]_-{\times n}.
}
$$
Note that the map
$\times n:\,\mathbb{Q}\oplus \mathbf{Q}\to \mathbb{Q}\oplus \mathbf{Q}$ in the first row is an isomorphism.
If $3\mid n$, the map $\times n:\,\mathbb{Z}_3\to \mathbb{Z}_3$ in the second row is the zero map; if $3\nmid n$,  the map $\times n:\,\mathbb{Z}_3\to \mathbb{Z}_3$ is an isomorphism. From the exactness of the diagram, the conclusions  are obvious.

\end{proof}

\begin{notion}\label{eta map of rho}\rm
By  Theorem \ref{thm mod DE}, 
$\gamma$ is a $\underline{\mathrm{K}}_{\langle\rho\rangle}$-isomorphism
$$(\underline{\mathrm{K}}(B_1),\underline{\mathrm{K}}
(B_1)_+))_{\underline{\mathrm{K}}_{\langle\rho\rangle}}
\cong (\underline{\mathrm{K}}(B_2),\underline{\mathrm{K}}(B_2)_+)_{
\underline{\mathrm{K}}_{\langle\rho\rangle}}.
$$
which is not $\Lambda$-linear (this map does not commute with some $\rho$ maps).
Thus, the restriction from
$\underline{\mathrm{K}}(B_1)_+$ to $\underline{\mathrm{K}}(B_2)_+$
forms a graded bijection,
That is,
$$\gamma:\,(\underline{\mathrm{K}}(B_1),\underline{\mathrm{K}}(B_1)_+)
\to (\underline{\mathrm{K}}(B_2),\underline{\mathrm{K}}(B_2)_+),
$$
is just a graded ordered isomorphism, 
at the same time, we have the restriction
$$
\gamma|_{{\mathrm{K}_*}(B_1)
\to {\mathrm{K}_*}(B_2)}=id:\,{\mathrm{K}_*}(B_1)
\to {\mathrm{K}_*}(B_2).
$$
Moreover, the induced map of $\gamma$ coincides with the identity map $$id:\underline{\mathrm{K}}(B_1|\,E_1)
\to \underline{\mathrm{K}}(B_2|\,E_2).$$
Thus, the following diagram is commutative
$$
\xymatrixcolsep{2pc}
\xymatrix{
{\,\,\underline{\mathrm{K}}(B_1)\,\,} \ar[r]^-{\underline{\mathrm{K}}(\iota_1)}\ar[d]_-{\gamma}
& {\,\,\underline{\mathrm{K}}(E_1)\,\,} \ar[d]_-{ \eta\triangleq id}
 \\
{\,\,\underline{\mathrm{K}}(B_2) \,\,}\ar[r]_-{\underline{\mathrm{K}}(\iota_2)}
& {\,\,\underline{\mathrm{K}}(E_2)\,\,}.}
$$

\end{notion}

Now we have the following theorem, which implies Theorem \ref{rho is nece}. The proof is similar to the one in \cite[Theorem 3.10]{ALcounter}.
\begin{theorem}\label{rho nec thm}
   There exists  a $\Lambda$-isomorphism $$
\eta\triangleq id:(\underline{\mathrm{K}}(E_1),\underline{\mathrm{K}}(E_1)_+,[1_{E_1}])_{\Lambda}\cong
(\underline{\mathrm{K}}(E_2),\underline{\mathrm{K}}(E_2)_+,[1_{E_2}])_{\Lambda},
$$
while $E_1\ncong E_2$.
\end{theorem}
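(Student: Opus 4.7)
The plan has two components, following closely the structure of the proof of \cite[Theorem 3.10]{ALcounter} but adapted to the Bockstein framework. First, I would verify that the candidate map $\eta = id$, defined through the identifications $\zeta_1, \zeta_2$ in the construction preceding the theorem, is genuinely an ordered scaled $\Lambda$-isomorphism. The computation in the proposition before \ref{eta map of rho} already shows that, under these identifications, $\mathrm{K}_*(E_1)$ and $\mathrm{K}_*(E_2)$ are literally equal and, since $\mathrm{K}_0(E_i) = \mathbb{Q}\oplus\mathbf{Q}$ is divisible and $\mathrm{K}_1(E_i)=\mathbb{Z}_3$, the mod-$n$ groups and all the Bockstein operations $\rho, \beta, \kappa$ are forced by the abstract structure of the six-term exact sequence with coefficients, so they coincide for $i=1,2$. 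Thus $\eta=id$ is automatically $\Lambda$-linear.

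Next I would verify ordered-ness and the commutativity of the diagram. The commutativity is \ref{eta map of rho} itself; for the scale, $[1_{E_1}]=(1,0)=[1_{E_2}]$; and for $\mathrm{K}_0^+(E_i)$, Theorem \ref{yibande k0+} gives
$$\mathrm{K}_0^+(E_i) = \{(x,y)\in\mathbb{Q}\oplus\mathbf{Q}:x>0\}\,\cup\,\{(0,y): y\in\mathrm{K}_0^+(B_i)\},$$
and since $\mathrm{K}_0^+(B_1)=\mathrm{K}_0^+(B_2)$, these cones agree. For the full Dadarlat–Gong order $\underline{\mathrm{K}}(E_i)_+$, the point is that $\gamma$ agrees with $id$ on $\underline{\mathrm{K}}(B_i|E_i)$ (this is the content of \ref{eta map of rho}), so for each $[e]\in\mathrm{K}_0^+(E_i)$ the set $\underline{\mathrm{K}}(I_e|E_i)$ matches on both sides under $\eta=id$.

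For the non-isomorphism $E_1\ncong E_2$, suppose $\Phi\colon E_1\to E_2$ were a $*$-isomorphism. The key observation is that $B_i$ is the unique primitive ideal of $E_i$ whose primitive quotient is simple: indeed, $\mathrm{Prim}(E_i)=\{B_i\}\sqcup\mathrm{Prim}(B_i)$, the primitive quotient at $B_i$ is $A_\chi$ (unital simple), whereas every other primitive ideal $J$ (necessarily with $J\cap B_i$ a primitive ideal of $B_i$) has $E_i/J$ containing the non-zero proper ideal $B_i/(J\cap B_i)$ (isomorphic to $B\otimes\mathcal{K}$ or $A\otimes\mathcal{K}$), so $E_i/J$ is not simple. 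Since $\Phi$ induces a homeomorphism of primitive spectra preserving the isomorphism class of primitive quotients, we must have $\Phi(B_1)=B_2$, and therefore $\Phi\vert_{B_1}\colon B_1\xrightarrow{\cong}B_2$.

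A $C^*$-algebra isomorphism $B_1\cong B_2$, i.e.\ $F_1\cong F_2$, would induce an ordered $\Lambda$-isomorphism
$$(\underline{\mathrm{K}}(F_1),\underline{\mathrm{K}}(F_1)_+)_{\Lambda}\cong
(\underline{\mathrm{K}}(F_2),\underline{\mathrm{K}}(F_2)_+)_{\Lambda},$$
contradicting Theorem \ref{thm mod DE}. Hence $E_1\ncong E_2$. The subtlest step is the uniqueness of $B_i$ among primitive ideals with simple quotient; everything else is bookkeeping once the computations of \ref{eta map of rho} and Theorem \ref{thm mod DE} are in hand.
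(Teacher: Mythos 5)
Your proposal is correct and follows essentially the same route as the paper: verify that $\eta=id$ is an ordered $\Lambda$-isomorphism from the computation of $\mathrm{K}_*(E_i;\mathbb{Z}_n)$ and Theorem \ref{yibande k0+}, then deduce $E_1\ncong E_2$ from $F_1\ncong F_2$ (Theorem \ref{thm mod DE}) because any isomorphism must carry $B_1$ onto $B_2$ --- the paper phrases this as $B_i$ being the unique maximal proper ideal, while your ``unique primitive ideal with simple quotient'' is the same observation. The only point where the paper does noticeably more work is the Dadarlat--Gong positivity, where it proves surjectivity of $\mathrm{K}_*(\iota_i;\mathbb{Z}_n)$ in order to describe $\underline{\mathrm{K}}(E_i)_+$ by an explicit list of conditions; your one-line appeal to $\gamma$ agreeing with $id$ on $\underline{\mathrm{K}}(B_i\,|\,E_i)$ is the right idea but would need that computation spelled out.
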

\begin{proof}
By Theorem \ref{yibande k0+}, 
for each $i=1,2$, we have
$$\mathrm{K}_0^+(E_i)=\{(\frac{l}{k},y)|\,k,l\in\mathbb{N}\backslash\{0\}, y\in \mathbf{Q}\}\cup \{(0,y)|\,y\in \mathrm{K}_0^+(B_i)\},$$
then $\mathrm{K}_0^+(E_1)\cong \mathrm{K}_0^+(E_2).$

Consider the sequence
$$
\underline{\mathrm{K}}(B_i)\xrightarrow{\underline{\mathrm{K}}(\iota_i)} \underline{\mathrm{K}}(E_i)\to \underline{\mathrm{K}}(A_\chi),
$$
where $\underline{\mathrm{K}}(\iota_i)$ is not injective, though ${\mathrm{K}_*}(\iota_i)$ is injective.
Then $\iota_i$ will induce the following commutative diagram with exact rows
$$
\xymatrixcolsep{1pc}
\xymatrix{
{\mathrm{K}_0(B_i)} \ar[d]^-{{\rm K}_0(\iota_i)} \ar[r]^-{\times n}
& {\mathrm{K}_0(B_i)} \ar[d]^-{{\rm K}_0(\iota_i)} \ar[r]^-{}
& {\mathrm{K}_0(B_i;\mathbb{Z}_n)} \ar[d]^-{{\rm K}_0(\iota_i;\mathbb{Z}_n)} \ar[r]^-{}
& 
 {\mathrm{K}_1(B_i)} \ar[d]^-{{\rm K}_1(\iota_i)} \ar[r]^-{\times n}
& {\mathrm{K}_1(B_i)} \ar[d]^-{{\rm K}_1(\iota_i)} \ar[r]^-{}
& {\mathrm{K}_1(B_i;\mathbb{Z}_n)} \ar[d]^-{{\rm K}_1(\iota_i;\mathbb{Z}_n)}
 \\
{\mathrm{K}_0(E_i)} \ar[r]^-{\times n}
& {\mathrm{K}_0(E_i)} \ar[r]_-{}
& {\mathrm{K}_0(E_i; \mathbb{Z}_n)} \ar[r]^-{}
& 
{\mathrm{K}_{1}(E_i)} \ar[r]^-{\times n}
& {\mathrm{K}_{1}(E_i)}\ar[r]^-{}
& {\mathrm{K}_1(B_i;\mathbb{Z}_n)} ,
}
$$
which is
$$
\xymatrixcolsep{2pc}
\xymatrix{
{\mathbf{Z}} \ar[d]^-{{\rm K}_0(\iota_i)} \ar[r]^-{\times n}
& {\mathbf{Z}} \ar[d]^-{{\rm K}_0(\iota_i)} \ar[r]^-{}
& {\mathrm{K}_0(B_i; \mathbb{Z}_n)} \ar[d]^-{{\rm K}_0(\iota_i;\mathbb{Z}_n)} \ar[r]^-{(\beta_{B_i})_n^{0}}
& 
 {\mathbb{Z}_3} \ar[d]^-{{\rm id}} \ar[r]^-{\times n}
& {\mathbb{Z}_3} \ar[d]^-{{\rm id}}\ar[r]^-{(\rho_{B_i})_n^{1}}
& {\mathrm{K}_1(B_i; \mathbb{Z}_n)} \ar[d]^-{{\rm K}_1(\iota_i;\mathbb{Z}_n)}
 \\
{\mathbb{Q}\oplus \mathbf{Q}} \ar[r]^-{\times n}
& {\mathbb{Q}\oplus \mathbf{Q}} \ar[r]_-{}
& {\mathrm{K}_0(E_i; \mathbb{Z}_n)} \ar[r]^-{(\beta_{E_i})_n^{0}}
& 
{\mathbb{Z}_3} \ar[r]^-{\times n}
& {\mathbb{Z}_3} \ar[r]^-{(\rho_{E_i})_n^{1}}
& {\mathrm{K}_1(E_i; \mathbb{Z}_n)}.
}
$$
(Recall the construction, we take ${\rm K}_1(\iota_i)={\rm id}_{\mathbb{Z}_3}$.)
In the above diagram: when $n\nmid3$,
we have $\mathrm{K}_0(E_i; \mathbb{Z}_n)=0$ and $\mathrm{K}_1(E_i; \mathbb{Z}_n)=0$.
So both ${\rm K}_0(\iota_i;\mathbb{Z}_n)$ and ${\rm K}_1(\iota_i;\mathbb{Z}_n)$ are automatically surjective.
When $n\mid 3$ ($n\neq 1$), the map
$\times n:\,\mathbb{Q}\oplus \mathbf{Q}\to\mathbb{Q}\oplus \mathbf{Q} $ is an isomorphism,
$\times n:\,\mathbb{Z}_3\to \mathbb{Z}_3$ is the zero map and
$\times n:\,\mathbf{Z}\to \mathbf{Z} $ is injective. By exactness of both the rows,
we have
$(\beta_{E_i})_n^{0},(\rho_{E_i})_n^{1},(\rho_{B_i})_n^{1}$ are all  isomorphisms and $(\beta_{B_i})_n^{0}$ is surjective.
By commutativity, we have both ${\rm K}_0(\iota_i;\mathbb{Z}_n)$ and ${\rm K}_1(\iota_i;\mathbb{Z}_n)$ are surjective.

Thus, for each $i=0,1$, we have $(x,\mathfrak{u},\bigoplus_{n=1}^{\infty}  ( \mathfrak{s}_{n,0}, \mathfrak{s}_{n,1}))\in  \underline{{\rm K}}(E_i)_+$
is positive if and only if one of the following conditions is satisfied:

(1) $x=(\frac{l}{k},y)$ and $\frac{l}{k}>0$;

(2) $x=(0,y)$, $y=(a,\prod_{\mathbb{Z}} a_m)\in {\rm K}_0^+(B_i)$ and $a>0$;

(3) $x=(0,y)$, $y=(0,\prod_{\mathbb{Z}} a_m)\in {\rm K}_0^+(B_i)$, $\mathfrak{u}=0,$ $\mathfrak{s}_{n,0}=0,$ $\mathfrak{s}_{n,1}=0$ for all  $n\geq1$.

That is, $\eta$ is an order preserving map. It is routine to check that $\eta$ is $\Lambda$-linear.

As
$B_1$, $B_2$ are the unique maximal proper ideals of $E_1$, $E_2$, respectively,
$B_1\ncong B_2$ implies
$E_1\ncong E_2.$

\end{proof}

\end{example}

\section{The map $\beta$ is automatic for extension}
 Theorem \ref{beta not ness} is a special case of the following theorem.
\begin{theorem}\label{beta aut thm}
Let $A_1, A_2, B_1,B_2$ be $C^*$-algebras.
Given two  extensions of $C^*$-algebras with trivial boundary maps
$$
0\to B_i \xrightarrow{\iota_i} E_i\to A_i\to 0,\quad i=1,2.
$$
Suppose we have two graded group isomorphisms  $$\gamma:\, \underline{\mathrm{K}}(B_1)\to\underline{\mathrm{K}}(B_2)\quad
{and}\quad
\eta:\, \underline{\mathrm{K}}(E_1)\to\underline{\mathrm{K}}(E_2),$$
such that the following diagram is commutative at the level of graded groups
$$
\xymatrixcolsep{2pc}
\xymatrix{
{\,\,\underline{\mathrm{K}}(B_1)\,\,} \ar[r]^-{\underline{\mathrm{K}}(\iota_1)}\ar[d]_-{\gamma}
& {\,\,\underline{\mathrm{K}}(E_1)\,\,} \ar[d]_-{ \eta}
 \\
{\,\,\underline{\mathrm{K}}(B_2) \,\,}\ar[r]_-{\underline{\mathrm{K}}(\iota_2)}
& {\,\,\underline{\mathrm{K}}(E_2)\,\,}.}
$$
If $\eta$ preserves the action of $\beta$, then $\gamma$ preserves the action of $\beta$, automatically.
\end{theorem}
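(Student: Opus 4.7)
The plan is to reduce the statement to a formal diagram chase using two ingredients: the naturality of the Bockstein operation $\beta_n^j$ with respect to the $*$-homomorphisms $\iota_i$, and the injectivity of the induced map on ordinary K-theory $\mathrm{K}_{j+1}(\iota_2):\mathrm{K}_{j+1}(B_2)\to \mathrm{K}_{j+1}(E_2)$. The former is a standard functorial property of the cofibre sequence defining $\beta$. The latter is immediate from the hypothesis of trivial boundary maps: the six-term exact sequence for $0\to B_2\to E_2\to A_2\to 0$ collapses into short exact sequences
$$
0\to \mathrm{K}_j(B_2)\to \mathrm{K}_j(E_2)\to \mathrm{K}_j(A_2)\to 0,\quad j=0,1,
$$
so that $\mathrm{K}_j(\iota_2)$ is injective for each $j$. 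Note that we do \emph{not} require injectivity at the level of $\underline{\mathrm{K}}$, which is precisely why the conclusion holds without any K-pureness assumption on the ambient algebras.

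For the main step, fix $x\in \mathrm{K}_j(B_1;\mathbb{Z}_n)$. The hypothesized commutativity at the level of $\underline{\mathrm{K}}$ gives
$$
\underline{\mathrm{K}}(\iota_2)\bigl(\gamma(\beta_n^j(x))\bigr) = \eta\bigl(\underline{\mathrm{K}}(\iota_1)(\beta_n^j(x))\bigr),
$$
and naturality of $\beta$ applied to $\iota_1$ converts the right-hand side into $\eta(\beta_n^j(\underline{\mathrm{K}}(\iota_1)(x)))$. Because $\eta$ preserves $\beta$, this equals $\beta_n^j(\eta(\underline{\mathrm{K}}(\iota_1)(x)))$, which by the commutativity hypothesis equals $\beta_n^j(\underline{\mathrm{K}}(\iota_2)(\gamma(x)))$, and naturality of $\beta$ applied to $\iota_2$ turns this into $\underline{\mathrm{K}}(\iota_2)(\beta_n^j(\gamma(x)))$. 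Chaining the equalities yields
$$
\underline{\mathrm{K}}(\iota_2)\bigl(\gamma(\beta_n^j(x))\bigr) = \underline{\mathrm{K}}(\iota_2)\bigl(\beta_n^j(\gamma(x))\bigr).
$$
Since both $\gamma(\beta_n^j(x))$ and $\beta_n^j(\gamma(x))$ lie in $\mathrm{K}_{j+1}(B_2)$, the injectivity of $\mathrm{K}_{j+1}(\iota_2)$ established above forces $\gamma(\beta_n^j(x)) = \beta_n^j(\gamma(x))$, which is the desired compatibility.

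I do not anticipate any serious obstacle: the argument is essentially a direct diagram chase, and the only non-formal input is the collapse of the six-term exact sequence, which is precisely the definition of trivial boundary maps. No additional hypotheses on $\gamma$, $\eta$, or the algebras are invoked beyond those in the statement, and the trivial-boundary assumption enters only through the injectivity of $\mathrm{K}_*(\iota_2)$ on ordinary K-theory rather than at the level of $\underline{\mathrm{K}}$, which is why the conclusion is stated without any restriction on the class $\mathcal{E}$.
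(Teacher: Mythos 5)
Your proposal is correct and follows essentially the same route as the paper's proof: both arguments combine the $\Lambda$-linearity (naturality of $\beta$) of $\underline{\mathrm{K}}(\iota_1)$ and $\underline{\mathrm{K}}(\iota_2)$, the commutativity of the given square, the hypothesis that $\eta$ preserves $\beta$, and finally the injectivity of $\mathrm{K}_{j+1}(\iota_2)$ coming from the trivial boundary maps. The paper merely writes out the same diagram chase element by element for $j=0$ and invokes symmetry for $j=1$, so no substantive difference.
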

\begin{proof}
Given $n\geq 1$, we will show the following diagram is commutative.
\begin{displaymath}
\xymatrixcolsep{0.4pc}
\xymatrix{
\mathrm{K}_0(B_1;\mathbb{Z}_n) \ar[ddd]^-{\gamma_n^0} \ar[rrr]^-{(\beta_{B_1})_n^{0}}\ar[dr]^-{\mathrm{K}_0(\iota_1;\mathbb{Z}_n)} &&& \mathrm{K}_1(B_1) \ar[ddd]^-{\gamma_0^1}  \ar[dr]^-{\mathrm{K}_1(\iota_1)}\\
&\mathrm{K}_0(E_1;\mathbb{Z}_n) \ar[rrr]^-{(\beta_{E_1})_n^{0}}\ar[d]^-{\eta_n^0}&&&
\mathrm{K}_1(E_1)   \ar[d]^-{\eta_0^1}\\
&\mathrm{K}_0(E_2;\mathbb{Z}_n)  \ar[rrr]_-{(\beta_{E_2})_n^{0}}&&&
\mathrm{K}_1(E_2).    \\
\mathrm{K}_0(B_2;\mathbb{Z}_n)\ar[rrr]_-{(\beta_{B_2})_n^{0}}  \ar[ur]_-{\mathrm{K}_0(\iota_2;\mathbb{Z}_n)} &&&\mathrm{K}_1(B_2) \ar[ur]_-{\mathrm{K}_1(\iota_2)}
}
\end{displaymath}
In fact, we only need to
prove the commutativity of the  following diagram:
$$
\xymatrixcolsep{2pc}
\xymatrix{
{\,\,\mathrm{K}_0(B_1;\mathbb{Z}_n)\,\,} \ar[r]^-{(\beta_{B_1})_n^{0}}\ar[d]_-{\gamma_n^0}
& {\,\,\mathrm{K}_1(B_1)\,\,} \ar[d]_-{ \gamma_0^1}
 \\
{\,\,\mathrm{K}_0(B_2;\mathbb{Z}_n) \,\,}\ar[r]_-{(\beta_{B_2})_n^{0}}
& {\,\,\mathrm{K}_1(B_2).\,\,}}
$$
For any $x_1\in \mathrm{K}_0(B_1;\mathbb{Z}_n)$, and denote
$$x_2=\gamma_n^0(x_1)\in \mathrm{K}_0(B_2;\mathbb{Z}_n),$$$$ y_1=(\beta_{B_1})_n^{0}(x_1)\in \mathrm{K}_1(B_1),\,\,\,\, y_2=(\beta_{B_2})_n^{0}(x_2)\in \mathrm{K}_1(B_2).$$
We are going to show that
$$\gamma_0^1(y_1)=y_2\in \mathrm{K}_1(B_2).$$

For $i=1,2$, denote
$$w_i=\mathrm{K}_0(\iota_i;\mathbb{Z}_n)(x_i)\in \mathrm{K}_0(E_i;\mathbb{Z}_n),
\,\,\,\, z_i=\mathrm{K}_1(\iota_i)(y_i)\in \mathrm{K}_1(E_i).$$

Since the both the induced maps $\underline{\mathrm{K}}(\iota_1)$ and $\underline{\mathrm{K}}(\iota_2)$ are $\Lambda$-linear, which certainly preserve the action of $\beta$, for each integer $n\geq 1$,
we have both
$$
\xymatrixcolsep{2pc}
\xymatrix{
{\,\,\mathrm{K}_0(B_1;\mathbb{Z}_n)\,\,} \ar[r]^-{(\beta_{B_1})_n^{0}}\ar[d]_-{\mathrm{K}_0(\iota_1;\mathbb{Z}_n)}
& {\,\,\mathrm{K}_1(B_1)\,\,} \ar[d]_-{ \mathrm{K}_1(\iota_1)}
 \\
{\,\,\mathrm{K}_0(E_1;\mathbb{Z}_n) \,\,}\ar[r]_-{(\beta_{E_1})_n^{0}}
& {\,\,\mathrm{K}_1(E_1)\,\,}}
\quad
{\rm and}
\quad
\xymatrixcolsep{2pc}
\xymatrix{
{\,\,\mathrm{K}_0(B_2;\mathbb{Z}_n)\,\,} \ar[r]^-{(\beta_{B_2})_n^{0}}\ar[d]_-{\mathrm{K}_0(\iota_2;\mathbb{Z}_n)}
& {\,\,\mathrm{K}_1(B_2)\,\,} \ar[d]_-{ \mathrm{K}_1(\iota_2)}
 \\
{\,\,\mathrm{K}_0(E_2;\mathbb{Z}_n) \,\,}\ar[r]_-{(\beta_{E_2})_n^{0}}
& {\,\,\mathrm{K}_1(E_2)\,\,}}
$$
are commutative diagrams. This implies
$$
(\beta_{E_i})_n^{0}(w_i)=z_i\in \mathrm{K}_1(E_i),\quad i=1,2.
$$

By assumption, the following diagram is commutative
$$
\xymatrixcolsep{2pc}
\xymatrix{
{\,\,\mathrm{K}_0(B_1;\mathbb{Z}_n)\,\,} \ar[r]^-{\mathrm{K}_0(\iota_1;\mathbb{Z}_n)}\ar[d]_-{\gamma_n^0}
& {\,\,\mathrm{K}_0(E_1;\mathbb{Z}_n)\,\,} \ar[d]_-{\eta_n^0}
 \\
{\,\,\mathrm{K}_0(B_2;\mathbb{Z}_n) \,\,}\ar[r]_-{\mathrm{K}_0(\iota_2;\mathbb{Z}_n)}
& {\,\,\mathrm{K}_0(E_2;\mathbb{Z}_n)\,\,}.}
$$
This implies
$\eta_n^0(w_1)=w_2\in \mathrm{K}_0(E_2;\mathbb{Z}_n).
$
Since $\eta$ preserves the action of $\beta$,
then we have the following commutative diagram
$$
\xymatrixcolsep{2pc}
\xymatrix{
{\,\,\mathrm{K}_0(E_1;\mathbb{Z}_n)\,\,} \ar[r]^-{(\beta_{E_1})_n^{0}}\ar[d]_-{\eta_n^0}
& {\,\,\mathrm{K}_1(E_1)\,\,} \ar[d]_-{ \eta_0^1}
 \\
{\,\,\mathrm{K}_0(E_2;\mathbb{Z}_n) \,\,}\ar[r]_-{(\beta_{E_2})_n^{0}}
& {\,\,\mathrm{K}_1(E_2)\,\,}.}
$$
Then we obtain
$\eta_0^1(z_1)=z_2$, i.e.,
$$
\eta_0^1\circ \mathrm{K}_1(\iota_1)(y_1)=\mathrm{K}_1(\iota_2)(y_2)\in \mathrm{K}_1(E_2).
$$

Since the extensions we concern have trivial boundary maps, both $\mathrm{K}_1(\iota_1)$ and $\mathrm{K}_1(\iota_2)$ are injective. Then  the following  diagram
$$
\xymatrixcolsep{2pc}
\xymatrix{
{\,\,\mathrm{K}_1(B_1)\,\,} \ar[r]^-{\mathrm{K}_1(\iota_1)}\ar[d]_-{\gamma_0^1}
& {\,\,\mathrm{K}_1(E_1)\,\,} \ar[d]_-{ \eta_0^1}
 \\
{\,\,\mathrm{K}_1(B_2) \,\,}\ar[r]_-{\mathrm{K}_1(\iota_2)}
& {\,\,\mathrm{K}_1(E_2)\,\,}}
$$
is commutative and we do obtain
$$\gamma_0^1(y_1)=y_2\in\mathrm{K}_1(B_2) .$$

Similarly, one can show that
the following diagram is also commutative
$$
\xymatrixcolsep{2pc}
\xymatrix{
{\,\,\mathrm{K}_1(B_1;\mathbb{Z}_n)\,\,} \ar[r]^-{(\beta_{B_1})_n^{1}}\ar[d]_-{\gamma_n^1}
& {\,\,\mathrm{K}_0(B_1)\,\,} \ar[d]_-{ \gamma_0^0}
 \\
{\,\,\mathrm{K}_1(B_2;\mathbb{Z}_n) \,\,}\ar[r]_-{(\beta_{B_2})_n^{1}}
& {\,\,\mathrm{K}_0(B_2)\,\,}.}
$$
\end{proof}
\begin{remark}
Note that in \cite[Theorem 3.3]{DE} (for convenience, we only consider the case of $n=3$), there exists two real rank zero non-isomorphic AD algebras $E,E'$ with
$$
(\underline{\mathrm{K}}(E),\underline{\mathrm{K}}(E)_+)_{\Lambda}\ncong (\underline{\mathrm{K}}(E'),\underline{\mathrm{K}}(E')_+)_{\Lambda}
$$
as $\Lambda$-modules, but there is an ordered $\underline{\mathrm{K}}_{\langle\beta\rangle}$-isomorphism
$$
\gamma:(\underline{\mathrm{K}}(E),
\underline{\mathrm{K}}(E)_+)_{\underline{\mathrm{K}}_{\langle\beta\rangle}}\cong (\underline{\mathrm{K}}(E'),
\underline{\mathrm{K}}(E')_+)_{\underline{\mathrm{K}}_{\langle\beta\rangle}}.
$$

Take new $B_1=E\otimes \mathcal{K}, B_2=E'\otimes \mathcal{K}$. We provide a similar construction of Example \ref{rho example} as follows:

Denote $\mathrm{K}_0(B_i)$ by $\widetilde{\mathbf{Z}}$  and set $\widetilde{\mathbf{Q}}=\widetilde{\mathbf{Z}}\otimes \mathbb{Q}$.
Consider the following exact sequence
$$0\to\widetilde{\mathbf{Z}}\xrightarrow{\iota}\widetilde{\mathbf{Q}}\xrightarrow{\varpi} \widetilde{\mathbf{Q}}/\widetilde{\mathbf{Z}}\to 0,$$
where $\iota$ is the natural embedding map
and $\varpi$ is the quotient map. 
By \cite[Theorem 4.20]{EG}, let $\widetilde{A}_\chi$ be a unital, simple, real rank zero and stable rank one AH algebra with no dimension growth satisfying
 $$\mathrm{K}_0(\widetilde{A}_\chi)=\mathbb{Q}\oplus \widetilde{\mathbf{Q}}/\widetilde{\mathbf{Z}},\quad \mathrm{K}_1(\widetilde{A}_\chi)=0,\quad [1_{\widetilde{A}_\chi}]=[(1,0)] $$
 $$\mathrm{K}_0^+(\widetilde{A}_\chi)=\{(x,y)\in \mathbb{Q}\oplus \widetilde{\mathbf{Q}}/\widetilde{\mathbf{Z}}\mid\,x>0\}\cup\{(0,0)\in \mathbb{Q}\oplus \widetilde{\mathbf{Q}}/\widetilde{\mathbf{Z}}\}.$$
Let
$$
0\to B_i\xrightarrow{\iota_i} E_i\xrightarrow{\pi_i}\widetilde{A}_\chi\to 0, \quad i=1,2,
$$
be the unital extensions which realise the classes of
$$
0\to \mathrm{K}_0(B_i)\xrightarrow{\rho_i} (\mathbb{Q}\oplus \widetilde{\mathbf{Q}},(1,0))\xrightarrow{\varpi_i} (\mathrm{K}_0(\widetilde{A}_\chi),(1,0))\to 0,
$$
$$
0\to \mathrm{K}_1(B_i)\xrightarrow{id}\mathrm{K}_1(B_i)\xrightarrow{0} \mathrm{K}_1(\widetilde{A}_\chi)\to 0.
$$

Following a similar discussion of \ref{eta map of rho}, one can construct a graded group isomorphism $\eta:\,\underline{\mathrm{K}}(E_1)\to \underline{\mathrm{K}}(E_2)$ with
a commutative diagram
$$
\xymatrixcolsep{2pc}
\xymatrix{
{\,\,\underline{\mathrm{K}}(B_1)\,\,} \ar[r]^-{\underline{\mathrm{K}}(\iota_1)}\ar[d]_-{\gamma}
& {\,\,\underline{\mathrm{K}}(E_1)\,\,} \ar[d]_-{ \eta}
 \\
{\,\,\underline{\mathrm{K}}(B_2) \,\,}\ar[r]_-{\underline{\mathrm{K}}(\iota_2)}
& {\,\,\underline{\mathrm{K}}(E_2)\,\,}.}
$$
But the map $\eta:\,\underline{\mathrm{K}}(E_1)\to \underline{\mathrm{K}}(E_2)$ can at most be an ordered $\underline{\mathrm{K}}_{\langle\beta\rangle}$-isomorphism rather than an ordered $\Lambda$-isomorphism. This is in contrast with Theorem \ref{rho nec thm} and Theorem \ref{beta aut thm}.

Moreover, we point out that if we don't insist the requirement of the commutativity of total K-theory, we still have an ordered scaled $\Lambda$-isomorphism  $$\zeta:\,
(\underline{\rm K}(E_1),\underline{\rm K}(E_1)_+,[1_{E_1}])_{\Lambda}
\cong  (\underline{\rm K}(E_2),\underline{\rm K}(E_2)_+,[1_{E_2}])_{\Lambda}$$
together with the following commutative diagram of K$_*$-groups
$$
\xymatrixcolsep{2pc}
\xymatrix{
{\,\,{\mathrm{K}}_*(B_1)\,\,} \ar[r]^-{{\mathrm{K}}_*(\iota_1)}\ar[d]_-{\gamma_*}
& {\,\,{\mathrm{K}}_*(E_1)\,\,} \ar[d]_-{ \zeta_*}
 \\
{\,\,{\mathrm{K}}_*(B_2) \,\,}\ar[r]_-{{\mathrm{K}}_*(\iota_2)}
& {\,\,{\mathrm{K}}_*(E_2)\,\,}.}
$$
This concludes Theorem \ref{beta+com}. Here,  the construction of $\zeta$ can be as  below:

Note that
$${\mathrm{K}}_0(E_i)\cong\mathbb{Q}\oplus \widetilde{\mathbf{Q}}/\widetilde{\mathbf{Z}}
\quad {\rm and}\quad {\mathrm{K}}_1(E_i)\cong{\mathrm{K}}_1(B_i),\quad i=1,2.$$
Concretely, for each $i=1,2,$
${\mathrm{K}}_1(E_i)$ consists of all the elements of the form
$(a,(a_m))\in \mathbb{Z}\oplus \prod_{m\geq 1}\mathbb{Z}_3$ with $a_m=(-1)^{i-1}\cdot[a]_3$ for all large enough $m$. Then from the following exact sequence
$$
\xymatrixcolsep{3pc}
\xymatrix{
{\mathrm{K}_0(E_i)}  \ar[r]^-{\times n}
& {\mathrm{K}_0(E_i)}  \ar[r]^-{}
& {\mathrm{K}_0(E_i; \mathbb{Z}_n)} \ar[d]_-{}
 \\
{\mathrm{K}_1(E_i; \mathbb{Z}_n)} \ar[u]_-{}
& {\mathrm{K}_1(E_i)} \ar[l]_-{}
& {\mathrm{K}_1(E_i)} \ar[l]_-{\times n},
}
$$ we have
$$
{\mathrm{K}}_0(E_i;\mathbb{Z}_n)\cong
\begin{cases}
  \bigoplus_{n\geq 1}\mathbb{Z}_3, & \mbox{if } n\mid3 \\
   0, & \mbox{if } n\nmid 3
\end{cases}
$$
and
${\mathrm{K}}_1(E_i;\mathbb{Z}_n)\cong{\mathrm{K}}_1(E_i)/ (n\cdot{\mathrm{K}}_1(E_i))$ whose elements has the form
$(b,(b_m))\in \mathbb{Z}_n\oplus \prod_{m\geq 1}(\mathbb{Z}_3/(n\cdot\mathbb{Z}_3))$.

We take
$$
\zeta_0^0=id:\,{\mathrm{K}}_0(E_1)\to {\mathrm{K}}_0(E_2),
$$
$$
\zeta_0^1:\,{\mathrm{K}}_1(E_1)\to {\mathrm{K}}_1(E_2)\quad{\rm with}\quad \zeta_0^1(a,(a_m))=(a,(-a_m));
$$
for $n\geq 2$, we take
$$
\zeta_n^0=-id:\,{\mathrm{K}}_0(E_1;\mathbb{Z}_n)\to {\mathrm{K}}_0(E_2;\mathbb{Z}_n),
$$
$$
\zeta_n^1={\mathrm{K}}_1(E_1;\mathbb{Z}_n)\to {\mathrm{K}}_1(E_2;\mathbb{Z}_n)\quad{\rm with}\quad \zeta_n^1(b,(b_m))=(b,(-b_m)).
$$

\end{remark}

\end{document}